\newtheorem{thm}{Theorem}
\newtheorem{lem}[thm]{Lemma}%
\newtheorem{prop}[thm]{Proposition}%
\newtheorem{cor}[thm]{Corollary}%
\theoremstyle{remark}
\newtheorem{remark}{Remark}[section] %
\theoremstyle{plain}
\numberwithin{equation}{section}
\def\RR{{\mathbb R}}
\def\ZZ{{\mathbb Z}}
\def\Span{\operatorname{Span}}
\def\vecb{{\text{\boldmath$b$}}}
\def\vece{{\text{\boldmath$e$}}}
\def\vecell{{\text{\boldmath$\ell$}}}
\def\vecm{{\text{\boldmath$m$}}}
\def\vecn{{\text{\boldmath$n$}}}
\def\vecq{{\text{\boldmath$q$}}}
\def\vecp{{\text{\boldmath$p$}}}
\def\vecv{{\text{\boldmath$v$}}}
\def\vecw{{\text{\boldmath$w$}}}
\def\vecx{{\text{\boldmath$x$}}}
\def\vecy{{\text{\boldmath$y$}}}
\def\vecgamma{{\text{\boldmath$\gamma$}}}
\def\vecxi{{\text{\boldmath$\xi$}}}
\def\vecnull{{\text{\boldmath$0$}}}
\def\scrA{{\mathcal A}}
\def\scrB{{\mathcal B}}
\def\scrD{{\mathcal D}}
\def\scrE{{\mathcal E}}
\def\scrF{{\mathcal F}}
\def\scrL{{\mathcal L}}
\def\scrN{{\mathcal N}}
\def\scrO{{\mathcal O}}
\def\scrP{{\mathcal P}}
\def\scrV{{\mathcal V}}
\def\scrW{{\mathcal W}}
\def\fC{{\mathfrak C}}
\def\fD{{\mathfrak D}}
\def\fS{{\mathfrak S}}
\def\fU{{\mathfrak U}}
\def\e{\mathrm{e}}
\def\i{\mathrm{i}}
\def\diag{\operatorname{diag}}
\def\dim{\operatorname{dim}}
\def\intl{{\operatorname{int}}}
\def\L{\operatorname{L{}}}
\def\S{\operatorname{S{}}}
\def\SL{\operatorname{SL}}
\def\ASL{\operatorname{ASL}}
\def\SO{\operatorname{SO}}
\def\vol{\operatorname{vol}}
\def\GamG{\Gamma\backslash G}
\def\trans{\,^\mathrm{t}\!}
\def\hatP{{\widehat{\mathcal P}}}
\def\Onder#1#2#3#4#5{#1 \setbox0=\hbox{$#1$}\setbox1=\hbox{$#2$}
       \dimen0=.5\wd0 \dimen1=\dimen0 \dimen2=\dp0 \dimen3=\dimen2
       \advance\dimen0 by .5\wd1 \advance\dimen0 by -#4
       \advance\dimen1 by -.5\wd1 \advance\dimen1 by -#4
       \advance\dimen2 by -#3 \advance\dimen2 by \ht1
       \advance\dimen2 by 0.3ex \advance\dimen3 by #5
        \kern-\dimen0\raisebox{-\dimen2}[0ex][\dimen3]{\box1}
       \kern\dimen1}
\newcommand{\Q}{\mathbb{Q}}
\newcommand{\R}{\mathbb{R}}
\newcommand{\Z}{\mathbb{Z}}
\newcommand{\col}{\: : \:}
\newcommand{\bn}{\mathbf{0}}
\newcommand{\ve}{\varepsilon}
\newcommand{\matr}[4]{\left( \begin{matrix} #1 & #2 \\ #3 & #4 \end{matrix} \right) }
\newcommand{\smatr}[4]{\bigr( \begin{smallmatrix} #1 & #2 \\ #3 & #4 \end{smallmatrix} \bigr) }
\title{Visibility and directions in quasicrystals}
\author{Jens Marklof}
\author{Andreas Str\"ombergsson}
\address{School of Mathematics, University of Bristol,
Bristol BS8 1TW, U.K.\newline
\rule[0ex]{0ex}{0ex} \hspace{8pt}{\tt j.marklof@bristol.ac.uk}}
\address{Department of Mathematics, Box 480, Uppsala University,
SE-75106 Uppsala, Sweden\newline
\rule[0ex]{0ex}{0ex} \hspace{8pt}{\tt astrombe@math.uu.se}}
\date{5 April 2014/6 August 2014. Accepted for publication in IMRN}
\thanks{The research leading to these results has received funding from the European Research Council under the European Union's Seventh Framework Programme (FP/2007-2013) / ERC Grant Agreement n. 291147. 
J.M.\ is furthermore supported by a Royal Society Wolfson Research Merit Award, and
A.S.\ is supported by a grant from the G\"oran Gustafsson Foundation for Research in Natural Sciences and Medicine.}
\begin{document}

\begin{abstract}
It is well known that a positive proportion of all points in a $d$-dimensional lattice is visible from the origin, and that these visible lattice points have constant density in $\RR^d$. In the present paper we prove an analogous result for a large class of quasicrystals, including the vertex set of a Penrose tiling. We furthermore establish that the statistical properties of the directions of visible points are described by certain $\SL(d,\RR)$-invariant point processes. Our results imply in particular existence and continuity of the gap distribution for directions in certain two-dimensional cut-and-project sets. This answers some of the questions raised by Baake et al.\  in [arXiv:1402.2818]. 
\end{abstract}

\maketitle

\section{Introduction}

A point set $\scrP\subset\RR^d$ has constant density in $\RR^d$ if there exists $\theta(\scrP)<\infty$ such that, for any bounded $\scrD\subset\RR^d$ with boundary of Lebesgue measure zero,
\begin{equation}\label{asyden}
\lim_{T\to\infty} \frac{ \#(\scrP\cap T \scrD)}{T^d} = \theta(\scrP) \vol(\scrD) .
\end{equation}
We refer to $\theta(\scrP)$ as the density of $\scrP$. It is interesting to compare the density of $\scrP$ with the density of the subset of {\em visible} points given by
\begin{align}\label{VISPDEF}
\hatP=\bigl\{\vecy\in\scrP\col t\vecy\notin\scrP\:\forall t\in(0,1)\bigr\}.
\end{align}
This definition assumes that the observer is at the origin $\vecnull$. Note also that, by definition, $\vecnull\notin\hatP$.
A classic example is the set of integer lattice points $\scrP=\ZZ^d$. In this case, the set of visible points is given by the primitive lattice points $\hatP=\{\vecm\in\ZZ^d\col \gcd(\vecm)=1\}$. Both sets have constant density with $\theta(\scrP)=1$ and $\theta(\hatP)=1/\zeta(d)$, where $\zeta(d)$ denotes the Riemann zeta function. 

In this paper we are interested in the visible points of a regular cut-and-project set $\scrP=\scrP(\scrW,\scrL)$ constructed from a (possibly affine) lattice $\scrL\subset\RR^{d+m}$ and a window set $\scrW\subset\RR^m$ (see Section \ref{sec:two} for detailed definitions). Such $\scrP$ are also referred to as (Euclidean) model sets. Our first observation is the following.

\begin{thm}\label{thm:main1}
If $\scrP=\scrP(\scrW,\scrL)$ is a regular cut-and-project set, then $\scrP$ and $\hatP$ have constant density with $0<\theta(\hatP)\leq \theta(\scrP)$.
\end{thm}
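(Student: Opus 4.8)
The natural strategy is to realize both counting functions $\#(\scrP\cap T\scrD)$ and $\#(\hatP\cap T\scrD)$ as sums over the lattice $\scrL$ (or over a fundamental domain of a suitable subgroup acting on the space of lattices), and then to apply an equidistribution result for the relevant orbit in $\SL(d+m,\RR)/\SL(d+m,\ZZ)$ (or its affine analogue) in the large-$T$ limit. First I would recall from Section \ref{sec:two} that a point $\vecy\in\RR^d$ lies in $\scrP=\scrP(\scrW,\scrL)$ iff there is $\vecell\in\scrL$ whose projection to the first $d$ coordinates is $\vecy$ and whose projection to the last $m$ coordinates lies in the window $\scrW$; thus
\begin{equation*}
\#(\scrP\cap T\scrD) = \#\bigl\{\vecell\in\scrL\col \pi(\vecell)\in T\scrD,\ \pi_{\mathrm{int}}(\vecell)\in\scrW\bigr\},
\end{equation*}
where $\pi,\pi_{\mathrm{int}}$ are the two projections. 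Rescaling $\vecell$ in the physical directions by $T^{-1}$ turns this into a count of points of a linearly-transformed lattice $g_T\scrL$ in the fixed region $\scrD\times\scrW$, with $g_T$ a diagonal element of $\SL(d+m,\RR)$ expanding the window directions and contracting the physical ones. Since $\scrL$ is a (possibly affine) lattice, $g_T\scrL$ ranges over a ray in the homogeneous space, and the regularity hypothesis on $\scrW$ (boundary of measure zero, so that $\scrD\times\scrW$ is a continuity set) together with the standard equidistribution of such expanding translates — Weyl-type equidistribution in $\SL(d+m,\ZZ)\backslash\SL(d+m,\RR)$, or the affine version — yields
\begin{equation*}
\lim_{T\to\infty}\frac{\#(\scrP\cap T\scrD)}{T^d} = c\,\vol(\scrD)\,\vol(\scrW)
\end{equation*}
for an explicit constant $c>0$ depending only on $\scrL$. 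This establishes constant density for $\scrP$ with $\theta(\scrP)=c\,\vol(\scrW)>0$.

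For the visible points, the key identity is the inclusion–exclusion / Möbius-type unfolding: $\vecy\in\hatP$ iff $\vecy\in\scrP$ and $j^{-1}\vecy\notin\scrP$ for every integer $j\geq 2$ (this uses that $t\vecy\in\scrP$ for some $t\in(0,1)$ forces $t$ to be rational of a controlled shape, which itself follows from the lattice structure — precisely the point where one must be a little careful, especially in the affine case where $\scrL$ need not be $\QQ$-structured; one reduces to the sublattice of $\scrL$ generated by differences, or argues directly that $t\vecy$ and $\vecy$ both lying in $\scrP$ forces $(1-t)$ or $t$ rational). Granting this, write
\begin{equation*}
\#(\hatP\cap T\scrD) = \sum_{\text{(finite inclusion–exclusion over }j)} \pm\,\#\bigl\{\vecy\in\scrP\cap T\scrD\col \vecy\in j\,\scrP\text{ for the specified }j\bigr\},
\end{equation*}
and observe that each such term is again a cut-and-project count for a sublattice of $\scrL$ of finite index (in the physical directions) with a rescaled window, hence has constant density by the first part. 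Truncating the inclusion–exclusion at level $J$ and controlling the tail by the crude upper bound $\#\{\vecy\in\scrP\cap T\scrD\col j^{-1}\vecy\in\scrP\}\ll_{\scrL,\scrD} T^d/j^{\,?}$ (summable in $j$, since scaling the window down by $j$ in $d$ of the directions and up in the $m$ window directions contributes a net factor that decays — here one uses $d\geq 1$, or more honestly $d\geq 2$ if $m$ is large, and in the borderline cases a slightly more refined argument) shows the limit exists and equals $\theta(\scrP)$ times a convergent product/sum, which is a fixed positive number $\leq 1$. Hence $\hatP$ has constant density and $0<\theta(\hatP)\leq\theta(\scrP)$, the upper bound being immediate from $\hatP\subseteq\scrP$ and the strict positivity coming from the fact that the $j=\infty$ inclusion–exclusion sum is a nonempty convergent expression whose value is bounded below (e.g.\ because almost every point of $\scrP$ near a generic scale is visible, as the density of non-visible points is a strictly smaller constant).

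The main obstacle is the tail estimate in the inclusion–exclusion for $\hatP$: one needs a uniform (in $j$ and $T$) upper bound of the form $\#\{\vecy\in\scrP\cap T\scrD\col j^{-1}\vecy\in\scrP\}=O(T^d\,\phi(j))$ with $\sum_j\phi(j)<\infty$, and this is not entirely automatic for cut-and-project sets because shrinking the window by $1/j$ in the physical directions simultaneously enlarges it by a factor $j$ in the internal directions, so the naive volume heuristic only gives $O(T^d\,j^{m-d}/j^{\,d})$-type savings when $d$ is small relative to $m$. Establishing this bound uniformly — most cleanly via a quantitative counting estimate for lattice points in thin boxes, i.e.\ a Schmidt- or Rogers-type bound for the lattice $\scrL$ (or via a soft argument using the non-divergence of the orbit $\{g_T\scrL\}$) — is the technical heart of the proof; everything else is bookkeeping and an appeal to the equidistribution already needed for Theorem~\ref{thm:main1}. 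One also has to handle the affine case, where there is no global $\QQ$-structure, by passing to the lattice of period vectors; this is a minor but necessary point.
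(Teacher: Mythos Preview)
Your proposal has a genuine gap at the very first step of the argument for $\hatP$: the claimed reduction of visibility to integer (or rational) divisors is false for general cut-and-project sets. You assert that $t\vecy\in\scrP$ and $\vecy\in\scrP$ with $t\in(0,1)$ forces $t$ rational, appealing to ``the lattice structure''. But $\scrP$ is not a lattice; it is the $\scrW$-windowed projection of one. The set $\pi(\scrL)$ is typically a \emph{dense} subgroup of $\RR^d$, and nothing prevents two points of $\scrP$ from lying on the same ray with irrational ratio. Concretely, take $d=N=2$ and $\scrL=\scrL_K^2$ for a real quadratic field $K$ as in Section~\ref{sec:twelwe}: then $\scrP\subset\scrO_K^2$, and for any unit or algebraic integer $\alpha\in\scrO_K$ with $0<\alpha<1$ (e.g.\ $\alpha=\sqrt2-1$ when $K=\QQ(\sqrt2)$) and any $\vecx\in\scrP$ with $\pi_2(\alpha)\pi_2(\vecx)\in\scrW$, the point $\alpha\vecx$ also lies in $\scrP$ and blocks $\vecx$. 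Here $\alpha$ is irrational, so your M\"obius sieve over integers $j\geq2$ never sees this obstruction. The hedge about ``reducing to the sublattice generated by differences'' does not help: the issue is not affine versus linear, but that $\pi(\scrL)$ has infinite rank as an abelian group in $\RR^d$.

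Because of this, the entire inclusion--exclusion scheme collapses: there is no countable family of ``divisor'' conditions to sieve over, and in the interesting cases (Penrose, Ammann--Beenker, \ldots) where $\kappa_\scrP<1$ the invisible points are produced precisely by such irrational scalings. The paper's proof is accordingly completely different. It first establishes a Siegel--Veech formula on the Ratner orbit closure $X=\Gamma\backslash\Gamma H_g$ (Section~\ref{sec:SV}), defining $\kappa_\scrP$ abstractly via $\SL(d,\RR)$-invariance; then deduces the small-$\sigma$ asymptotics of the void probability $E(0,\sigma,\scrP)$ (Section~\ref{sec:five}); uses this together with the already-known convergence \eqref{eq:main21} to get a lower bound $\liminf T^{-d}\#(\hatP\cap T\scrD)\geq\kappa_\scrP C_\scrP\vol(\scrD)$ (Lemma~\ref{LEM2}); proves a matching upper bound by a continuity/equidistribution argument on $X$ (Lemmata~\ref{KEYCONTLEM}--\ref{LEM3}); and finally patches sectors together (Section~\ref{sec:nine}). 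None of this is ``bookkeeping'': the route through homogeneous dynamics and the auxiliary limit distribution is essential precisely because no arithmetic sieve is available.
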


The constant density of $\scrP$ is a well known fact \cite{Hof98,Schlottmann,Moody02}. The main point of Theorem \ref{thm:main1} is that the {\em visible} set $\hatP$ also has a strictly positive constant density. Although for cut-and-project sets $\scrP$ with generic choices of $\scrL$ we have $\theta(\hatP)=\theta(\scrP)$, there are important examples with $\theta(\hatP)< \theta(\scrP)$. The Penrose tilings and other cut-and-project sets which are based on the construction in \cite[Sec.\ 2.2]{qc} 
fall into this category, cf.~\cite{Baake13,Pleasants03}. In some special cases, such as the Ammann-Beenker model, the visible set $\hatP$ can be explicitly described by a simple condition in the cut-and-project construction, see \cite[Ch.\ 10.4]{Baake13} for details.

The second result of this paper concerns the distribution of directions in $\scrP$. Consider a general point set with constant density $\theta(\scrP)>0$ ($\scrP$ may be the visible set itself). 
We write $\scrP_T=\scrP\cap\scrB^d_T\setminus\{\bn\}$ for the subset of points lying in the punctured open ball of radius $T$, centered at the origin.
The number of such points is $\#\scrP_T\sim v_d \, \theta(\scrP)\, T^d$ as $T\to\infty$, where 
$v_d=\vol(\scrB^d_1)=\pi^{d/2}/\Gamma(\tfrac{d+2}{2})$ is the volume of the unit ball. 
For each $T$, we study the directions $\|\vecy\|^{-1}\vecy\in\S_1^{d-1}$ with $\vecy \in\scrP_T$, counted {\em with} multiplicity (if $\scrP=\hatP$ then the multiplicity is naturally one). The asymptotics \eqref{asyden} implies that, as $T\to\infty$, the directions become uniformly distributed on $\S_1^{d-1}$. That is, for any set $\fU\subset\S_1^{d-1}$ with boundary of measure zero (with respect to the volume element $\omega$ on $\S_1^{d-1}$) we have
\begin{equation}\label{udi}
\lim_{T\to\infty} \frac{\# \{\vecy\in\scrP_T\col\|\vecy\|^{-1}\vecy\in\fU\}}
{\#\scrP_T} 
= \frac{\omega(\fU)}{\omega(\S_1^{d-1})} . 
\end{equation}
Recall that $\omega(\S_1^{d-1})=d\,v_d$.

To understand the fine-scale distribution of the directions in $\scrP_T$, we consider the probability of finding $r$ directions in a small open disc $\fD_T(\sigma,\vecv)\subset\S_1^{d-1}$ with random center $\vecv\in\S_1^{d-1}$ and volume $\omega(\fD_T(\sigma,\vecv))=\frac{\sigma d}{\theta(\scrP) T^{d}}$ with $\sigma>0$ fixed.
Denote by
\begin{equation}\label{disc00}
	\scrN_{T}(\sigma,\vecv,\scrP)=\#\{\vecy\in\scrP_T \col \|\vecy\|^{-1}\vecy\in \fD_T(\sigma,\vecv)\}
\end{equation}
the number of points in $\fD_T(\sigma,\vecv)$.
The scaling of the disc size ensures that 
the expectation value for the counting function is asymptotically equal to 
$\sigma$. That is, for any probability measure $\lambda$ on $\S_1^{d-1}$ with continuous density,
\begin{equation}\label{expval}
\lim_{T\to\infty}	\int_{\S_1^{d-1}} \scrN_{T}(\sigma,\vecv,\scrP)\, d\lambda(\vecv) =\sigma .
\end{equation}
This fact follows directly from \eqref{asyden}. In the following, we denote by 
\begin{equation}\label{rel-dens}
\kappa_\scrP:=\frac{\theta(\hatP)}{\theta(\scrP)}
\end{equation}
the relative density of visible points in $\scrP$. We will prove:

\begin{thm}
\label{thm:main2}
Let $\scrP=\scrP(\scrW,\scrL)$ be a regular cut-and-project set, $\sigma>0$, $r\in\ZZ_{\geq 0}$, and let $\lambda$ be a Borel probability measure on $\S_1^{d-1}$ which is absolutely continuous with respect to $\omega$.
Then the limits
\begin{equation} \label{eq:main21}
	E(r,\sigma,\scrP):=\lim_{T\to\infty} \lambda(\{ \vecv\in\S_1^{d-1} : \scrN_{T}(\sigma,\vecv,\scrP)=r \}),
\end{equation}
\begin{equation} \label{eq:main22}
	E(r,\sigma,\hatP):=\lim_{T\to\infty} \lambda(\{ \vecv\in\S_1^{d-1} : \scrN_{T}(\sigma,\vecv,\hatP)=r \})
\end{equation}
exist, are continuous in $\sigma$ and independent of $\lambda$. For $\sigma\to 0$ we have
\begin{equation}\label{eq:main23}
E(0,\sigma,\scrP) = 1 - \kappa_\scrP \, \sigma +o(\sigma),
\end{equation}
\begin{equation} \label{eq:main24}
E(0,\sigma,\hatP) = 1 -  \sigma +o(\sigma).
\end{equation}
\end{thm}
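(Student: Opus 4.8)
The plan is to reduce the fine-scale statistics of directions to an equidistribution statement for expanding translates in a homogeneous space, in the spirit of the authors' earlier work, and then to read off the limiting distributions from the associated $\SL(d,\RR)$-invariant point process.

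\emph{Geometric reduction.} First I would fix $\vece_1$ as reference direction and choose measurably, for $\vecv\in\S_1^{d-1}$, a rotation $k(\vecv)\in\SO(d)$ with $k(\vecv)\vece_1=\vecv$. Let $\{a_t\}\subset\SL(d,\RR)$ be the diagonal one-parameter subgroup contracting the $\vece_1$-axis and expanding its orthogonal complement, normalised so that, for a suitable $t=t(T)\to\infty$, the linear map $a_{t(T)}k(\vecv)^{-1}$ carries the cone $\{\vecy\col\|\vecy\|<T,\ \vecy/\|\vecy\|\in\fD_T(\sigma,\vecv)\}$ onto a region $\fC_{\sigma,T}$ that converges to a \emph{fixed} region $\fC_\sigma\subset\RR^d$ as $T\to\infty$ (independently of $\vecv$, since after pre-rotating by $k(\vecv)^{-1}$ the spherical disc is centred at $\vece_1$). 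Being a linear image of a cone over a spherical disc centred at the origin, $\fC_\sigma$ is automatically star-shaped about $\vecnull$, with $\vol(\fC_\sigma)=\sigma/\theta(\scrP)$, respectively $\sigma/\theta(\hatP)$ in the visible-set normalisation. A cut-and-project set transforms equivariantly: for $L\in\SL(d,\RR)$ acting on the physical space, $L\,\scrP(\scrW,\scrL)=\scrP(\scrW,\scrL')$ with $\scrL'$ the image of $\scrL$ under the block action of $L$ on the physical factor, and moreover visibility commutes with such linear maps, $\widehat{L\scrP}=L\,\hatP$. Hence
\[
\scrN_T(\sigma,\vecv,\scrP)=\#\bigl(\scrP(\scrW,\scrL\,g_T(\vecv))\cap\fC_{\sigma,T}\bigr),\qquad
\scrN_T(\sigma,\vecv,\hatP)=\#\bigl(\widehat{\scrP(\scrW,\scrL\,g_T(\vecv))}\cap\fC_{\sigma,T}\bigr),
\]
with $g_T(\vecv)=(a_{t(T)}k(\vecv)^{-1},\id_m)\in\ASL(d+m,\RR)$ (I use $\ASL$ since $\scrL$ may be affine). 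The right-hand sides are now intersection counts of a \emph{rescaled} cut-and-project set with an essentially fixed region.

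\emph{Equidistribution.} Let $\scrX\subseteq\ASL(d+m,\ZZ)\backslash\ASL(d+m,\RR)$ be the closure of the orbit of $\scrL$ under $H=\{(g,\id_m)\col g\in\SL(d,\RR)\}$, with its $H$-invariant probability measure $\mu_\scrX$; regularity of $\scrP$ is what guarantees that $\scrX$ is a homogeneous subspace carrying such a $\mu_\scrX$ and that the window causes no degeneracy. The core input I would establish is that, as $T\to\infty$, for every $\lambda$ absolutely continuous with respect to $\omega$ and every bounded $F$ on $\scrX$ whose discontinuity set is $\mu_\scrX$-null,
\[
\int_{\S_1^{d-1}}F\bigl(\scrL\,g_T(\vecv)\bigr)\,d\lambda(\vecv)\ \longrightarrow\ \int_{\scrX}F\,d\mu_\scrX .
\]
This is the equidistribution of the $a_t$-translates of the compact $k(\,\cdot\,)$-orbit, $\lambda$-averaged; it follows from Ratner's measure classification together with Dani--Margulis non-divergence in the quantitative form developed in the authors' earlier work on free path lengths in periodic and quasicrystalline settings, and absolute continuity of $\lambda$ is harmless because the expansion regularises an arbitrary $L^1$ density. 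Feeding this into the intersection counts --- after truncating the window $\scrW$, using $\fC_{\sigma,T}\to\fC_\sigma$, invoking a Rogers/Siegel-type second-moment bound to rule out escape of mass in the unbounded directions, and, for the visible case, checking via regularity of $\scrW$ that the visibility status of a point of $\fC_\sigma$ depends $\mu_\scrX$-almost surely continuously on the orbit point (the whole ray, including its behaviour near $\vecnull$, varies continuously) --- yields that $\scrN_T(\sigma,\cdot,\scrP)$ and $\scrN_T(\sigma,\cdot,\hatP)$ converge in distribution to $\#(\scrP(\scrW,\scrL_\omega)\cap\fC_\sigma)$ and $\#(\widehat{\scrP(\scrW,\scrL_\omega)}\cap\fC_\sigma)$, with $\scrL_\omega\in\scrX$ drawn from $\mu_\scrX$. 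Thus the limits \eqref{eq:main21} and \eqref{eq:main22} exist and equal $\mu_\scrX(\#(\,\cdot\,)=r)$; independence of $\lambda$ is immediate. Continuity in $\sigma$ follows because $\fC_\sigma$ grows continuously, the count is monotone in $\sigma$, and by the Siegel-type mean-value identity underlying Theorem \ref{thm:main1} one has $\int_\scrX\#(\partial\fC_{\sigma_0}\cap\scrP(\scrW,\scrL_\omega))\,d\mu_\scrX=\theta(\scrP)\vol(\partial\fC_{\sigma_0})=0$, so almost surely no point of the random set lies on a prescribed boundary, whence the count is $\mu_\scrX$-a.s.\ locally constant in $\sigma$ near $\sigma_0$ and dominated convergence applies.

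\emph{Small-$\sigma$ asymptotics and the main obstacle.} Since $\fC_\sigma$ is star-shaped about $\vecnull$, the set $\scrP(\scrW,\scrL_\omega)$ meets $\fC_\sigma$ if and only if its visible points do: a point of minimal norm in $\scrP(\scrW,\scrL_\omega)\cap\fC_\sigma$ cannot have a scalar multiple by some $t\in(0,1)$ lying in $\scrP(\scrW,\scrL_\omega)$ (such a multiple would again lie in $\fC_\sigma$, with smaller norm), hence is visible. Therefore $E(0,\sigma,\scrP)=\mu_\scrX\bigl(\widehat{\scrP(\scrW,\scrL_\omega)}\cap\fC_\sigma=\emptyset\bigr)$, which is the same expression as $E(0,\sigma,\hatP)$ but with $\vol(\fC_\sigma)=\sigma/\theta(\scrP)$ instead of $\sigma/\theta(\hatP)$. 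By inclusion--exclusion, $1-E(0,\sigma,\scrP)$ equals the mean number of visible points of $\scrP(\scrW,\scrL_\omega)$ in $\fC_\sigma$ minus an error bounded by the mean number of \emph{pairs} of such points; the former is $\theta(\hatP)\vol(\fC_\sigma)=\kappa_\scrP\,\sigma$ by the Siegel mean-value identity, and the latter is $O(\vol(\fC_\sigma)^2)=O(\sigma^2)$ by the pair-correlation (second factorial moment) bound for regular cut-and-project sets. This gives \eqref{eq:main23}; the identical computation with the $\hatP$-normalisation replaces $\theta(\hatP)\vol(\fC_\sigma)$ by $\theta(\hatP)\cdot\sigma/\theta(\hatP)=\sigma$, giving \eqref{eq:main24}. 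The hard part is the equidistribution step: identifying $\scrX$ and $\mu_\scrX$ for a general regular cut-and-project set and supplying the Ratner-theoretic and non-divergence estimates behind it, together with enough uniformity in $\lambda$ and in the window truncation to transfer convergence from test functions to point-count statistics; the geometric reduction and the small-$\sigma$ expansion are routine by comparison.
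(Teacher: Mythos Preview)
Your overall architecture coincides with the paper's: the geometric reduction to a fixed cone $\fC_\sigma$ via $K(\vecv)$ and the diagonal flow, the equidistribution of $\Gamma\varphi_g(K(\vecv)\Phi^{t})$ in $X=\Gamma\backslash\Gamma H_g$ (this is \cite[Thm.~4.1]{qc}), and the use of the Siegel--Veech identity to show that $\mu$-almost surely no point of the random set lies on $\partial\fC_\sigma$, giving continuity in $\sigma$. Two places deserve comment.

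\textbf{Almost-sure continuity of the visible count.} You say that ``the visibility status of a point of $\fC_\sigma$ depends $\mu$-almost surely continuously on the orbit point'' and that ``the whole ray\dots\ varies continuously''. This is the content of the paper's Lemma~\ref{KEYCONTLEM2}, and it is not a one-line consequence of regularity of $\scrW$. One must rule out, for $\mu$-a.e.\ $h$, the possibility that as $h'\to h$ a blocking point slides onto $\partial\scrW$, or that two points of $\scrP^{x}$ which are collinear with $\bn$ become non-collinear (or vice versa) under perturbation. The paper handles this by combining the boundary-measure-zero result \cite[Thm.~5.1]{qc} with an analyticity argument (Lemma~\ref{ALLORALMOSTNONELEM}) showing that collinearity relations among projected lattice vectors are either identically satisfied on $H_g$ or fail $\mu$-a.e. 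Your sketch does not indicate this mechanism.

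\textbf{The small-$\sigma$ expansion.} Here your route genuinely diverges from the paper's, and there is a gap. You bound the error in $1-E(0,\sigma,\scrP)=\kappa_\scrP\sigma+o(\sigma)$ by the second factorial moment $\int_X\binom{\#(\hatP^x\cap\fC_\sigma)}{2}\,d\mu$ and assert this is $O(\sigma^2)$ ``by the pair-correlation bound for regular cut-and-project sets''. But the relevant process is $\scrP^x$ with $x$ random in the \emph{$\SL(d,\RR)$-orbit closure} $X$, not in the translation hull; the standard autocorrelation bounds for model sets are about the latter. In the simplest case $m=0$, $d=2$, $\scrP=\ZZ^2$, one has $X=\SL(2,\ZZ)\backslash\SL(2,\RR)$ and $\int_X\binom{\#(\scrP^x\cap\fC_\sigma)}{2}\,d\mu=+\infty$ for every $\sigma>0$, so the crude domination $\hatP^x\subset\scrP^x$ is useless. (For primitive vectors the bound happens to hold, indeed the count is $\leq1$ for small $\sigma$, but that already requires an argument specific to visibility.) For general $\scrP$ with small Ratner group $H_g$ in $d=2$ there is no off-the-shelf Rogers formula giving your $O(\sigma^2)$. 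The paper sidesteps the second moment entirely: it uses only the \emph{first} Siegel--Veech moment together with a separation argument. One introduces the minimal angle $\varphi_0(\scrP^x)$ between distinct points of $\hatP^x\cap\scrB_1^d$; for $\sigma<\sigma_0(\scrP^x)$ the rotated cones $\fC(\sigma)k$ can contain at most one visible point, so the $\SO(d)$-average of $I(\hatP^x\cap\fC(\sigma)k\neq\emptyset)$ is bounded \emph{below} by a sum over visible points, and then Monotone Convergence plus \eqref{KAPPADEFLEMRES} recovers $\kappa_\scrP\sigma$ in the limit. This yields \eqref{eq:main233} without any pair bound. You should either supply a proof of the second-moment estimate valid for all regular $\scrP$ and all $H_g$, or switch to the paper's first-moment/separation argument.

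A related caution: you invoke the Siegel identity with constant $\theta(\hatP)$, but in the paper's logic the equality $\theta(\hatP)=\kappa_\scrP C_\scrP$ (Theorem~\ref{thm:main1}) is \emph{deduced from} \eqref{eq:main233} via Lemmata~\ref{LEM2}--\ref{LEM3}; the Siegel--Veech formula is first proved with an a~priori constant $\kappa_\scrP\in(0,1]$, and only afterwards is that constant identified with the relative density. Make sure your ordering is not circular.
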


This theorem generalizes our previous work on directions in Euclidean lattices \cite[Section 2]{partI}.
The existence of the limit \eqref{eq:main21} has already been established in \cite[Thm.\ A.1]{qc}. It is worthwhile noting that, if the set of directions in $\scrP$ were independent and uniformly distributed random variables in $\S_1^{d-1}$, then \eqref{eq:main21} would converge almost surely to the Poisson distribution
\begin{equation}
E(r,\sigma) = \frac{\sigma^r}{r!} \, \e^{-\sigma}.
\end{equation}
Although \eqref{eq:main24} is consistent with the Poisson distribution, we will see in Section \ref{sec:random} that $E(r,\sigma,\hatP)$ is characterized by a certain point process in $\RR^d$ which is determined by a finite-dimensional probability space.

Since $\lambda$ is arbitrary in Theorem \ref{thm:main2},
the result can readily be extended to cases where $\scrP$ is exhausted by more 
general expanding $d$-dimensional domains in place of the balls $\scrB_T^d$.
We make this precise in the appendix.

Theorem \ref{thm:main2} allows us to answer a recent question of Baake et al.~\cite{BGHJ} on the existence of the gap distribution for the directions in the class of two-dimensional cut-and-project sets considered here. In dimension $d=2$, it is convenient to identify the circle $\S_1^1$ with the unit interval mod 1, and represent the set of directions in $\scrP_T$ as $\tfrac1{2\pi}\arg\bigl(y_1+\i y_2)$ with $\vecy=(y_1,y_2)\in\scrP_T$.
We label these numbers (with multiplicity) in increasing order by
\begin{align}
-\tfrac12<\xi_{T,1}\leq\xi_{T,2}\leq\cdots\leq\xi_{T,N(T)}\leq\tfrac12,
\end{align}
where $N(T):=\#\scrP_T$. The analogous construction for the visible set $\hatP$ yields the multiplicity-free set of directions
\begin{align}\label{WHXI}
-\tfrac12<\widehat\xi_{T,1}<\widehat\xi_{T,2}<\cdots<\widehat\xi_{T,\widehat N(T)}\leq\tfrac12
\end{align}
where $\widehat N(T):=\#\hatP_T\leq N(T)$.
We also set $\xi_{T,0}=\widehat\xi_{T,0}=\xi_{T,N(T)}-1=\widehat\xi_{T,\widehat N(T)}-1$.
\begin{cor}\label{GAPDISTRCOR}
If $\scrP=\scrP(\scrW,\scrL)$ is a regular cut-and-project set
in dimension $d=2$, there exists a continuous decreasing function $F$ on $\R_{\geq0}$ 
satisfying $F(0)=1$ and $\lim_{s\to\infty}F(s)=0$,
such that for every $s\geq0$,
\begin{align}\label{GAPDISTRCORRES1}
\lim_{T\to\infty}\frac{\#\{1\leq j\leq \widehat N(T)\col \widehat N(T)(\widehat\xi_{T,j}-\widehat\xi_{T,j-1})\geq s\}}{\widehat N(T)}=F(s)
\end{align}
and
\begin{align}\label{GAPDISTRCORRES2}
\lim_{T\to\infty}\frac{\#\{1\leq j\leq N(T)\col N(T)(\xi_{T,j}-\xi_{T,j-1})\geq s\}}{N(T)}=
\begin{cases}1&\text{if }\: s=0
\\
\kappa_\scrP F(\kappa_\scrP s)&\text{if }\: s>0 .
\end{cases}
\end{align}
\end{cor}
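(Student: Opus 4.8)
The plan is to derive the gap distribution from the fine-scale statistics $E(r,\sigma,\hatP)$ and $E(r,\sigma,\scrP)$ supplied by Theorem \ref{thm:main2}, using the general principle that the limiting nearest-neighbour spacing distribution of a point process on the circle is recovered from its void probabilities. First I would treat the visible set $\hatP$. Fix $s\ge 0$. In dimension $d=2$ the disc $\fD_T(\sigma,\vecv)$ is an arc of the circle, and for a point $\widehat\xi_{T,j-1}$ of the rescaled direction set, the event that the next point $\widehat\xi_{T,j}$ satisfies $\widehat N(T)(\widehat\xi_{T,j}-\widehat\xi_{T,j-1})\ge s$ is, up to the asymptotic identification $\widehat N(T)\sim 2\pi\,\theta(\hatP)\,T^2/(2\pi)=\theta(\hatP)\pi T^2$ and the scaling $\omega(\fD_T(\sigma,\vecv))=\sigma\,d/(\theta(\hatP)T^{d})$, precisely the event that the arc of normalized length $s$ immediately clockwise-adjacent to a point of $\hatP_T$ contains no further point of $\hatP_T$. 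Averaging over $j$ and invoking the equidistribution \eqref{udi} to replace the average over the (random) points $\widehat\xi_{T,j-1}$ by an integral against a smooth $\lambda$, one finds that the left-hand side of \eqref{GAPDISTRCORRES1} converges to a quantity expressible through $E(0,\cdot,\hatP)$; concretely, writing $\Psi(s)=E(0,s,\hatP)$ for the void probability of an arc of expected occupancy $s$, the Palm-type argument gives
\begin{equation}\label{gappdf}
F(s)=-\frac{d}{ds}\int_0^\infty E(0,u,\hatP)\,du\Big/\!\!\int_0^\infty E(0,u,\hatP)\,du
\end{equation}
or, more usefully in integrated form, $F(s)=\big(\int_0^\infty E(0,s+u,\hatP)\,du\big)\big/\big(\int_0^\infty E(0,u,\hatP)\,du\big)$ after one checks that the normalizing denominator equals the mean gap, which is $1$ by the expectation identity \eqref{expval}. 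Thus $F(s)=\int_s^\infty E(0,u,\hatP)\,du$, which is manifestly continuous and decreasing, with $F(0)=\int_0^\infty E(0,u,\hatP)\,du=1$ and $\lim_{s\to\infty}F(s)=0$ since $E(0,u,\hatP)\to 0$ (a consequence of the fact that the limiting process has a.s.\ no largest gap; this non-triviality must be argued, e.g.\ from the characterization of the process in Section \ref{sec:random}).

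Next I would turn to the full set $\scrP$, where the new feature is that directions are counted with multiplicity, so $\xi_{T,j}=\xi_{T,j-1}$ is possible and indeed forces the $s=0$ value on the right-hand side of \eqref{GAPDISTRCORRES2} to be $1$: the proportion of indices $j$ with $\xi_{T,j}-\xi_{T,j-1}>0$ tends to $\widehat N(T)/N(T)\to\kappa_\scrP\le 1$, so almost all gaps are zero when $\kappa_\scrP<1$ (and the statement is vacuous-but-correct when $\kappa_\scrP=1$). For $s>0$ only the genuine gaps contribute, and a genuine gap of $\scrP$ at a point $\vecy$ is the same as a gap of $\hatP$ at $\vecy$ (the positions of distinct directions in $\scrP$ coincide with the directions in $\hatP$). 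Hence the normalized gaps $N(T)(\xi_{T,j}-\xi_{T,j-1})$ for the $\widehat N(T)$ nonzero gaps are, since $N(T)\sim\kappa_\scrP^{-1}\widehat N(T)$, just $\kappa_\scrP^{-1}$ times the normalized gaps $\widehat N(T)(\widehat\xi_{T,k}-\widehat\xi_{T,k-1})$ of $\hatP$. Therefore the limiting fraction of nonzero $\scrP$-gaps exceeding $s$ equals the fraction of $\hatP$-gaps exceeding $\kappa_\scrP s$, namely $F(\kappa_\scrP s)$, and multiplying by the asymptotic proportion $\kappa_\scrP$ of indices carrying a nonzero gap yields $\kappa_\scrP F(\kappa_\scrP s)$, which is \eqref{GAPDISTRCORRES2}. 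One should double-check the edge effect at the wrap-around point (the single gap straddling $\pm\tfrac12$, handled by the convention $\xi_{T,0}=\xi_{T,N(T)}-1$) contributes $O(1/N(T))$ and is negligible.

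The main obstacle is the first step: rigorously passing from the fixed-disc void probabilities $E(0,\sigma,\hatP)$ (which concern a disc of fixed size centred at a $\lambda$-random, hence \emph{deterministic-distribution}, centre) to the nearest-neighbour spacing statistic (which concerns a disc anchored at a point \emph{of the process}). This is the standard but delicate Palm-measure / "typical point versus random location" interchange, and making it work requires (i) an estimate showing that with high probability no arc of normalized length $\asymp s$ contains more than $O_s(1)$ directions, so that tail contributions from rare clusters are controlled, and (ii) an argument that the limit is insensitive to whether we condition on a point at the left endpoint — this is where one genuinely uses that the underlying limiting object is an honest $\SL(d,\RR)$-invariant point process coming from a homogeneous space, as set up in Section \ref{sec:random}, rather than merely a list of numbers $E(r,\sigma,\cdot)$. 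A clean way to organize this is to prove the joint convergence of $(\scrN_{T}(\sigma_1,\vecv,\hatP),\dots,\scrN_{T}(\sigma_k,\vecv,\hatP))$ for finitely many nested discs — which the equidistribution methods behind Theorem \ref{thm:main2} already deliver — and then extract \eqref{GAPDISTRCORRES1} by a monotone-class / inclusion–exclusion argument letting $k\to\infty$, together with the uniform tail bound in (i). Continuity and monotonicity of $F$ then follow formally from the representation $F(s)=\int_s^\infty E(0,u,\hatP)\,du$ together with the already-established continuity of $\sigma\mapsto E(0,\sigma,\hatP)$ in Theorem \ref{thm:main2}.
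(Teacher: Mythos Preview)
Your central formula for $F$ is wrong, and this error hides the main difficulty of the corollary. The standard Palm relation between the void probability $E(0,\sigma,\hatP)$ and the gap survival function $F$ goes in the opposite direction from what you wrote: for a stationary point process of intensity~$1$ on the line (or circle), the probability that an interval of length $s$ is empty equals the integral of the gap survival function, i.e.\ $E(0,s,\hatP)=\int_s^\infty F(u)\,du$, and hence
\[
F(s)=-\frac{d}{ds}E(0,s,\hatP),
\]
not $F(s)=\int_s^\infty E(0,u,\hatP)\,du$. (Your displayed formula \eqref{gappdf} is the derivative of a constant, and your justification that $\int_0^\infty E(0,u,\hatP)\,du=1$ via \eqref{expval} is incorrect: that integral equals $\tfrac12$ of the second moment of the gap distribution, which has no reason to be~$2$ here.) One quick sanity check: with the correct formula, $F(0^+)=-\partial_\sigma E(0,0^+,\hatP)=1$ follows immediately from \eqref{eq:main24}, whereas with your formula $F(0)$ need not equal~$1$.

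This matters because the bulk of the paper's proof is precisely the continuity of $F$, i.e.\ the \emph{everywhere differentiability} of $\sigma\mapsto E(0,\sigma,\hatP)$. Convexity of $E(0,\cdot,\hatP)$ gives differentiability off a countable set and that $F$ is decreasing, but to rule out jumps one must show that the left and right derivatives agree at every point. The paper does this by exploiting the $\SL(2,\RR)$-invariance of the limiting process on~$X$: a jump at some $s_0$ would force, for a set of $x\in X$ of positive $\mu$-measure, the existence of a pair of visible points in $\hatP^x$ with a prescribed exact slope difference, and then conjugating by the diagonal subgroup $\smatr{\sqrt\lambda}{0}{0}{1/\sqrt\lambda}$ produces uncountably many such constraints inside a fixed bounded window, contradicting local finiteness. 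Your integral representation would make continuity automatic, which is exactly why the incorrect formula led you to overlook this step entirely.

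Your reduction of \eqref{GAPDISTRCORRES2} to \eqref{GAPDISTRCORRES1} via $N(T)\sim\kappa_\scrP^{-1}\widehat N(T)$ and the identification of nonzero $\scrP$-gaps with $\hatP$-gaps is correct and matches the paper. The passage from the void statistics to the gap statistics that you flag as the ``main obstacle'' is indeed the right worry, but it is handled by the general statistical argument cited in the paper (see \cite{Marklof07}); the genuinely new input needed here is the continuity argument above.
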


It follows from the properties of $F(s)$ that 
the limit distribution function in \eqref{GAPDISTRCORRES2} is continuous at $s=0$ if and only if $\kappa_\scrP=1$.

In the special case when $\scrP=\Z^2$, \eqref{GAPDISTRCORRES1} was proved earlier by Boca, Cobeli and Zaharescu \cite{Boca00},
who also gave an explicit formula for the limit distribution.
More generally for $\scrP$ any affine lattice in $\R^2$, Corollary \ref{GAPDISTRCOR} was proved in 
\cite[Thm.\ 1.3, Cor.\ 2.7]{partI}.

Baake et al.~\cite{BGHJ} have observed numerically that the limiting gap distribution in Corollary~\ref{GAPDISTRCOR} may vanish near zero. In Section \ref{sec:twelwe} we will explain this hard-core repulsion between visible directions in the case of two-dimensional cut-and-project sets constructed over algebraic number fields, including any $\scrP$ associated with a Penrose tiling. There is, however, no hard-core repulsion for {\em typical} two-dimensional cut-and-project sets. The phenomenon can be completely ruled out in higher dimensions $d\geq 3$, where we show that $E(0,\sigma,\hatP)> 1-\sigma$ for all $\sigma>0$.

The organization of this paper is as follows. In Section \ref{sec:two} we recall the definition of a cut-and-project set of a higher-dimensional lattice. In Section \ref{sec:random} we construct random point processes in $\RR^d$ whose realizations yield the visible points in certain $\SL(d,\RR)$-invariant families of cut-and-project sets. These point processes describe the limit distributions in Theorem \ref{thm:main2}, cf.~Theorem \ref{visThm0} in Section \ref{sec:random}. This follows closely  the construction in \cite{qc} for the full cut-and-project set. An important technical tool in our approach is the Siegel-Veech formula, which is stated and proved in Section \ref{sec:SV}. In Section \ref{sec:five} we describe the small-$\sigma$ asymptotics of the void distribution in \eqref{eq:main23} and \eqref{eq:main24}. Sections \ref{sec:six}--\ref{sec:nine} are devoted to the proof of Theorem \ref{thm:main1}, Sections \ref{sec:ten} and \ref{sec:eleven} to the proofs of Theorem \ref{thm:main2} and Corollary \ref{GAPDISTRCOR}, respectively.
Finally in Section \ref{sec:twelwe} we discuss the possible vanishing of the limiting gap distribution near zero.

\section{Cut-and-project sets}\label{sec:two}

We start by recalling the definition of a cut-and-project set in $\R^d$ using our notation in \cite{qc}. These sets are also known as (Euclidean) model sets. We refer the reader to the recent monograph \cite{Baake13} and the surveys \cite{Moody97,Moody00} for a comprehensive introduction.

Denote by $\pi$ and $\pi_\intl$ the orthogonal projection of $\RR^n=\RR^d\times\RR^m$ 
onto the first $d$ and last $m$ coordinates.
We refer to $\RR^d$ and $\RR^m$ as the {\em physical space} and {\em internal space}, respectively. 
Let $\scrL\subset\RR^n$ be a lattice of full rank.
Then the closure of the set $\pi_\intl(\scrL)$ is an abelian subgroup $\scrA$ of $\RR^m$.
We denote by $\scrA^\circ$ the connected subgroup of $\scrA$ containing $\bn$;
then $\scrA^\circ$ is a linear subspace of $\R^m$,
say of dimension $m_1$, and there exist $\vecb_1,\ldots,\vecb_{m_2}\in\scrL$
($m=m_1+m_2$) such that $\pi_\intl(\vecb_1),\ldots,\pi_\intl(\vecb_{m_2})$ are linearly independent in
$\R^m/\scrA^\circ$ and 
\begin{align}
\scrA=\scrA^\circ+\Z\pi_\intl(\vecb_1)+\ldots+\Z\pi_\intl(\vecb_{m_2}).
\end{align}

Given $\scrL$ and a bounded subset $\scrW\subset\scrA$ with non-empty interior, we define
\begin{equation}\label{CUTPROJDEF}
	\scrP(\scrW,\scrL) = \{ \pi(\vecy) : \vecy\in\scrL, \; \pi_\intl(\vecy)\in\scrW \} \subset \RR^d .
\end{equation}
We will call $\scrP=\scrP(\scrW,\scrL)$ a {\em cut-and-project set}, and $\scrW$ the {\em window}. 
We denote by $\mu_\scrA$ the Haar measure of $\scrA$, normalized so that its restriction to $\scrA^\circ$
is the standard $m_1$-dimensional Lebesgue measure.
If $\scrW$ has boundary of measure zero with respect to $\mu_\scrA$, we will say $\scrP(\scrW,\scrL)$ is {\em regular}. 
Set $\scrV=\R^d\times\scrA^\circ$; 
then $\scrL_\scrV=\scrL\cap\scrV$ is a lattice of full rank in $\scrV$.
Let $\mu_\scrV=\vol\times\mu_\scrA$ be the natural volume measure on $\R^d\times\scrA$
(this restricts to the standard $d+m_1$ dimensional Lebesgue measure on $\scrV$).
It follows from Weyl equidistribution (see \cite{Hof98} or \cite[Prop.\ 3.2]{qc})
that for any regular cut-and-project set $\scrP$ and 
any bounded $\scrD\subset\RR^d$ with boundary of measure zero with respect to Lebesgue measure, 
\begin{equation}\label{WEYLCOUNTING1}
\lim_{T\to\infty} \frac{\#\{ \vecb \in \scrL\col \pi(\vecb)\in\scrP\cap T \scrD\}}{T^d} 
= C_\scrP\vol(\scrD) 
\end{equation}
where
\begin{align}\label{DELTADMLDEF}
C_\scrP:=\frac{\mu_\scrA(\scrW)}{\mu_\scrV(\scrV/\scrL_\scrV)}.
\end{align}
A further condition often imposed in the quasicrystal literature is that $\pi|_\scrL$ is injective (i.e., the map $\scrL\to \pi(\scrL)$ is one-to-one); we will not require this here. To avoid coincidences in $\scrP$, we assume throughout this paper that the window is appropriately chosen so that the map $\pi_\scrW: \{ \vecy\in\scrL : \pi_\intl(\vecy)\in\scrW \}\to \scrP$ is bijective. Then \eqref{WEYLCOUNTING1} implies
\begin{equation}\label{density000}
\lim_{T\to\infty} \frac{ \#(\scrP\cap T \scrD)}{T^d} = C_\scrP\vol(\scrD),
\end{equation}
i.e., $\scrP$ has density $\theta(\scrP)=C_\scrP$.
Under the above assumptions $\scrP(\scrW,\scrL)$ is a Delone set, i.e., uniformly discrete and relatively dense in $\RR^d$.

We furthermore extend the definition of cut-and-project sets $\scrP(\scrW,\scrL)$ to affine lattices $\scrL=\scrL_0+\vecx$ with $\vecx\in\RR^n$ and $\scrL_0$ a lattice; note that $\scrP(\scrW,\scrL+\vecx)=\scrP(\scrW-\pi_\intl(\vecx),\scrL)+\pi(\vecx)$.

\section{Random cut-and-project sets}\label{sec:random}

Following our approach in \cite{qc}, we will now, for any given regular cut-and-project set $\scrP=\scrP(\scrW,\scrL)$, construct two
$\SL(d,\R)$-invariant random point processes on $\R^d$ which will describe the limit distributions in Theorem \ref{thm:main2}.
Let $G=\ASL(n,\R)=\SL(n,\R)\ltimes\R^n$, with multiplication law
\begin{align}
(M,\vecxi)(M',\vecxi')=(MM',\vecxi M'+\vecxi').
\end{align}
Also set $\Gamma=\ASL(n,\Z)\subset G$.
Choose $g\in G$ and $\delta>0$ so that $\scrL=\delta^{1/n}(\Z^ng)$,
and let $\varphi_g$ be the embedding of $\ASL(d,\R)$ in $G$ given by
\begin{equation}
\varphi_g: \ASL(d,\RR) \to G,\quad (A,\vecx) \mapsto g \left( \begin{pmatrix} A  &  0 \\ 0 & 1_m \end{pmatrix},(\vecx,\vecnull) \right) g^{-1} .
\end{equation}
It then follows from Ratner's work \cite{Ratner91a,Ratner91b}
that there exists a unique closed connected subgroup $H_g$ of $G$ such that $\Gamma\cap H_g$ is a lattice in $H_g$, $\varphi_g(\SL(d,\R))\subset H_g$,
and the closure of $\Gamma\backslash\Gamma\varphi_g(\SL(d,\RR))$ in $\GamG$ is given by 
\begin{align}
X=\Gamma\backslash\Gamma H_g.
\end{align}
Note that $X$ can be naturally identified with the homogeneous space $(\Gamma\cap H_g)\backslash H_g$. 
We denote the unique right-$H_g$ invariant probability measure on either of these spaces by ${\mu}$;
sometimes we will also let ${\mu}$ denote the corresponding Haar measure on $H_g$.
For each $x=\Gamma h\in X$ we set
\begin{align}\label{Px:def}
\scrP^x:=\scrP(\scrW,\delta^{1/n}(\ZZ^n h g))
\end{align}
and denote by $\hatP^x$ the corresponding set of visible points.
Both sets are well defined since $\overline{\pi_\intl(\delta^{1/n}(\ZZ^n h g))}\subset\scrA$ for all $h\in H_g$;
in fact $\overline{\pi_\intl(\delta^{1/n}(\ZZ^n h g))}=\scrA$ for ${\mu}$-almost all $h\in H_g$;
cf.\ \cite[Prop.\ 3.5]{qc}. Note that $\scrP^x$ and $\hatP^x$ with $x$ random in $(X,{\mu})$ define random point processes on $\R^d$.
The fact that $\varphi_g(\SL(d,\R))\subset H_g$ implies that these processes are $\SL(d,\R)$-invariant. 

\begin{thm}\label{visThm0}
The limit distributions in Theorem \ref{thm:main2} are given by
\begin{equation}\label{eq01}
E(r,\sigma,\scrP)={\mu}(\{ x\in X\col \# (\scrP^x \cap \fC(\sigma))= r \}) 
\end{equation}
and
\begin{equation}\label{eq02}
E(r,\sigma,\hatP)={\mu}(\{ x\in X\col \# (\hatP^x \cap \fC(\kappa_\scrP^{-1}\sigma))= r \}) 
\end{equation}
where
\begin{equation} \label{FCCSDEF}
	\fC(\sigma) =\bigg\{(x_1,\ldots,x_d)\in\RR^d \col 0 < x_1 < 1, \: \|(x_2,\ldots,x_d)\|<
\Bigl(\frac{\sigma d}{C_\scrP v_{d-1}} 
\Bigr)^{1/(d-1)}x_1\biggr\}.
\end{equation}
\end{thm}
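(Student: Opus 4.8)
The plan is to deduce Theorem~\ref{visThm0} from the already-established asymptotics of the counting function together with the equidistribution of expanding translates of the $\SL(d,\R)$-orbit. First I would reduce the spherical-disc count $\scrN_T(\sigma,\vecv,\scrP)$ to a count of lattice points in a thin cone. Fixing $\vecv$ and writing a rotation $k(\vecv)\in\SO(d)$ taking $\vece_1$ to $\vecv$, a point $\vecy\in\scrP_T$ has direction in $\fD_T(\sigma,\vecv)$ precisely when $T^{-1}\vecy k(\vecv)^{-1}$ lies in a region which, after the linear rescaling $\vecx\mapsto(x_1,T(x_2,\ldots,x_d))$ in the last $d-1$ coordinates, converges to the fixed cone $\fC(\sigma)$ of \eqref{FCCSDEF}; the exponent $1/(d-1)$ and the constant involving $C_\scrP v_{d-1}$ are exactly what make $\omega(\fD_T(\sigma,\vecv))=\tfrac{\sigma d}{\theta(\scrP)T^d}$ translate into the stated cone. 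Thus $\scrN_T(\sigma,\vecv,\scrP)$ equals the number of $\vecb\in\scrL$ with $\pi_\intl(\vecb)\in\scrW$ and $\pi(\vecb)$ in a slowly-rotating, linearly-distorted preimage of $\fC(\sigma)$ — i.e.\ a count of the form $\#\bigl(\scrP(\scrW,\delta^{1/n}(\Z^n h_T(\vecv) g))\cap\fC(\sigma)\bigr)$ for a suitable $h_T(\vecv)\in H_g$ built from $k(\vecv)$ and a diagonal expanding element $a_T=\varphi_g(\diag(T^{-(d-1)},T,\ldots,T)^{1/d}\cdot\text{(unit det.\ normalisation)})$ composed appropriately — here I would invoke $\varphi_g(\SL(d,\R))\subset H_g$ so that these translates stay inside $X$.

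Next I would appeal to the equidistribution theorem underlying \cite{qc} (Ratner-theory plus the linearisation/non-divergence machinery): as $T\to\infty$, the measures $\lambda$-averaged over $\vecv$ of the point $\Gamma h_T(\vecv)\in X$ converge weakly to the invariant measure $\mu$. Concretely, for any bounded test function of the point process one has
\begin{equation}
\int_{\S_1^{d-1}}f\bigl(\scrP^{\,\Gamma h_T(\vecv)}\bigr)\,d\lambda(\vecv)\;\longrightarrow\;\int_X f(\scrP^x)\,d\mu(x),
\end{equation}
and the analogous statement for $\hatP$. Applying this with $f$ the indicator that the point process has exactly $r$ points in a fixed bounded set would give \eqref{eq01} and \eqref{eq02}, provided the relevant indicator is a.e.\ continuous for the limiting random point set — which holds because $\mu$-a.s.\ the boundary $\partial\fC(\sigma)$ contains no point of $\scrP^x$ (resp.\ $\hatP^x$), this being a null event for the SL$(d,\R)$-invariant process since $\partial\fC(\sigma)$ has measure zero and the first intensity of the process is finite (Siegel--Veech). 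The passage from weak convergence to convergence of the genuine probabilities $\lambda(\{\vecv:\scrN_T=r\})$ also needs a tightness/no-escape-of-mass input, which is exactly the role of the non-divergence estimates already developed in \cite{qc}; since the existence of the limit \eqref{eq:main21} is cited there (\cite[Thm.\ A.1]{qc}), I can quote it and concentrate on identifying the limit.

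For the visible set $\hatP$ there is an extra wrinkle: visibility is a non-local condition (one must check $t\vecy\notin\scrP$ for all $t\in(0,1)$), so $\#(\hatP^x\cap\fC(\sigma))$ is not simply a function of $\scrP^x$ restricted to a bounded set. I would handle this by truncation: for $\vecy$ in the bounded cone $\fC(\sigma)$, only finitely many scalings $t\vecy$ with $t\in(0,1)$ can hit $\scrP$ within a large ball, and the contribution of very small $t$ (equivalently very short would-be obstructing points, which by uniform discreteness of the cut-and-project set cannot exist below a fixed scale, or by the relative-density bound are controlled) is negligible; so $\#(\hatP^x\cap\fC(\sigma))$ is, up to a set of $x$ of arbitrarily small $\mu$-measure, determined by $\scrP^x$ inside a fixed large ball, and the same a.e.-continuity argument applies. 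The shift from $\fC(\sigma)$ to $\fC(\kappa_\scrP^{-1}\sigma)$ in \eqref{eq02} is bookkeeping: the disc size in \eqref{disc00}--\eqref{expval} is normalised by $\theta(\scrP)$, whereas the natural normalisation for the visible process is by $\theta(\hatP)=\kappa_\scrP\,\theta(\scrP)$, so the cone must be dilated by the factor $\kappa_\scrP^{-1}$ in the $x_1$-transverse directions — and indeed $\fC(\kappa_\scrP^{-1}\sigma)$ is precisely the cone of \eqref{FCCSDEF} with $C_\scrP$ replaced by $\theta(\hatP)$, consistent with the first-intensity of $\hatP^x$ being $\theta(\hatP)$ by the Siegel--Veech formula of Section~\ref{sec:SV}.

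The main obstacle I expect is the non-locality of the visibility condition and the attendant uniformity: one must rule out, uniformly in $T$ and with $\lambda$-probability tending to $1$, the existence of obstructing points $t\vecy\in\scrP$ with $t$ close to $0$, i.e.\ control the short-distance structure of the rotated/distorted cut-and-project sets $\scrP^{\Gamma h_T(\vecv)}$ near the origin. This is where Theorem~\ref{thm:main1} (the strictly positive density $\theta(\hatP)>0$ and the Delone property) and the Siegel--Veech bound on the mean number of points of $\scrP^x$ in a ball are essential; with those in hand the truncation error is $o(1)$ and the identification of the limit goes through.
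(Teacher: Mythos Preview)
Your overall strategy --- rewrite the disc count as a count in the fixed cone $\fC(\sigma)$ after rotating by $K(\vecv)$ and applying the diagonal flow $\Phi^{(\log T)/(d-1)}$, then invoke equidistribution on $X$ and the Portmanteau theorem --- is exactly the route the paper takes in Section~\ref{sec:ten}, and your bookkeeping for the factor $\kappa_\scrP^{-1}$ in \eqref{eq02} is correct.

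The gap is in how you justify the $\mu$-a.e.\ continuity of $x\mapsto\#(\hatP^x\cap\fC(\sigma))$. You assert this follows because $\mu$-a.s.\ $\partial\fC(\sigma)$ contains no point of $\hatP^x$. That is necessary but far from sufficient. The genuine obstruction is that the \emph{visibility status} of a point of $\scrP^x$ can jump under perturbation of $x$: if $\vecp,\vecq\in\scrP^x$ are collinear with $\bn$ (say $\vecp=t\vecq$, $0<t<1$) then $\vecq$ is invisible, but for nearby $x'$ the perturbed points may cease to be collinear and $\vecq$ becomes visible --- and conversely. Reducing to continuity of $x\mapsto\scrP^x\cap B$ for a fixed ball $B$ (which is what your truncation buys) does not help, because the map from a finite configuration to its visible subset is itself discontinuous at every configuration containing a collinear pair. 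The paper deals with this via Lemma~\ref{KEYCONTLEM2}, whose core input is Lemma~\ref{ALLORALMOSTNONELEM}: for fixed $\vecm,\vecn\in\Z^n$, the set of $h\in H_g$ for which $\pi(\vecm hg)$ and $\pi(\vecn hg)$ are accidentally proportional is the zero locus of a real-analytic function on the connected group $H_g$, hence either all of $H_g$ or $\mu$-null. Summing over $\Z^n\times\Z^n$ and combining with the window-boundary null set $S_1$ and the coincidence null set $S_2$ gives the required local constancy.

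Incidentally, the ``non-locality'' you worry about is a red herring here: $\fC(\sigma)$ is star-shaped with apex $\bn$, so for $\vecy\in\fC(\sigma)$ the entire open segment $\{t\vecy:0<t<1\}$ already lies in $\fC(\sigma)$, and $\hatP^x\cap\fC(\sigma)$ is determined by $\scrP^x\cap\fC(\sigma)$ alone. No truncation or short-vector estimate is needed; what is needed, and what your sketch does not supply, is the collinearity analysis above.
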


We note that relation \eqref{eq01} is a special case of \cite[Thm.\ A.1]{qc}. The new result of the present study is \eqref{eq02}.

In \cite[Section 1.4]{qc} we also consider the closed connected subgroup $\widetilde H_g$ of $G$ such that $\Gamma\cap \widetilde H_g$ is a lattice in $\widetilde H_g$, $\varphi_g(\ASL(d,\R))\subset\widetilde H_g$, and the closure of $\Gamma\backslash\Gamma\varphi_g(\ASL(d,\RR))$ in $\GamG$ is given by $\widetilde X:=\Gamma\backslash\Gamma\widetilde H_g$. The unique right-$\widetilde H_g$ invariant probability measure on $\widetilde X$ is denoted by $\widetilde\mu$. The point process $\scrP^x$ in \eqref{Px:def} with $x$ random in $(\widetilde X,{\widetilde \mu})$ is now $\ASL(d,\R)$-invariant, i.e., in addition to the previous $\SL(d,\R)$-invariance we also have translation-invariance. The latter implies that $\scrP^x=\hatP^x$ for $\widetilde\mu$-almost every $x\in\widetilde X$. Proposition 4.5 in \cite{qc} shows that for Lebesgue-almost all $\vecy\in\R^d\times\{\bn\}$ we have $H_{g(1_n,\vecy)}=\widetilde H_g$. This has the following interesting consequence.

\begin{cor}
Given any regular cut-and-project set $\scrP$ there is a subset $\fS\subset\R^d$ of Lebesgue measure zero such that
for every $\vecy\in\R^d\setminus\fS$
\begin{equation}\label{eq010}
E(r,\sigma,\scrP+\vecy)=E(r,\sigma,\widehat{\scrP+\vecy})={\widetilde\mu}(\{ x\in \widetilde X\col \# (\scrP^x \cap \fC(\sigma))= r \}) .
\end{equation}
\end{cor}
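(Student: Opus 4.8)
The plan is to realise $\scrP+\vecy$ as a regular cut-and-project set in its own right, apply Theorem~\ref{visThm0} to it, and use Proposition~4.5 of \cite{qc} to identify the resulting homogeneous space with $(\widetilde X,\widetilde\mu)$. The only step that needs genuine care is the bookkeeping relating the data of $\scrP+\vecy$ to that of $\scrP$: one must check that translating $\scrP$ changes neither the internal/window data nor the density, and that the translation of the underlying lattice corresponds on the group side exactly to right multiplication by $(1_n,\delta^{-1/n}(\vecy,\vecnull))$. Once this is done, Proposition~4.5 of \cite{qc} and Theorem~\ref{visThm0} apply verbatim and no new dynamical input is needed.

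To carry this out, fix $g\in G$ and $\delta>0$ with $\scrL=\delta^{1/n}(\Z^n g)$. Since $\pi_\intl((\vecy,\vecnull))=\vecnull$ and $\pi((\vecy,\vecnull))=\vecy$, the identity $\scrP(\scrW,\scrL+\vecx)=\scrP(\scrW-\pi_\intl(\vecx),\scrL)+\pi(\vecx)$ gives $\scrP+\vecy=\scrP(\scrW,\scrL+(\vecy,\vecnull))$. Because $\pi_\intl$ is unaffected by this translation, the abelian group $\scrA$, the subspace $\scrA^\circ$ and the window $\scrW$ are exactly those of $\scrP$; hence $\scrP+\vecy$ is again a \emph{regular} cut-and-project set, it inherits the standing no-coincidence hypothesis, and $C_{\scrP+\vecy}=C_\scrP$ (equivalently $\theta(\scrP+\vecy)=\theta(\scrP)$, which is immediate from \eqref{asyden}), so the cone $\fC(\sigma)$ of \eqref{FCCSDEF} is the same for $\scrP$ and for $\scrP+\vecy$. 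A short computation in $G$ shows $\scrL+(\vecy,\vecnull)=\delta^{1/n}(\Z^n g')$ with $g'=g\,(1_n,\delta^{-1/n}(\vecy,\vecnull))$, and that for $x=\Gamma h$ the point set attached to the data $(g',\delta)$ by \eqref{Px:def} is $\scrP(\scrW,\delta^{1/n}(\Z^n h g'))=\scrP(\scrW,\delta^{1/n}(\Z^n h g))+\vecy=\scrP^x+\vecy$.

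Next, by Proposition~4.5 of \cite{qc} there is a Lebesgue-null set of $\vecv\in\R^d$ outside of which $H_{g(1_n,(\vecv,\vecnull))}=\widetilde H_g$; as $\vecv\mapsto\delta^{1/n}\vecv$ preserves null sets, there is a null set $\fS_0\subset\R^d$ with $H_{g'}=\widetilde H_g$ for all $\vecy\notin\fS_0$. For such $\vecy$ the homogeneous space and invariant probability measure produced by the construction of Section~\ref{sec:random} for the data $(g',\delta)$ are $\widetilde X$ and $\widetilde\mu$, so Theorem~\ref{visThm0} applied to $\scrP+\vecy$, together with the identifications above, yields
\begin{equation*}
E(r,\sigma,\scrP+\vecy)=\widetilde\mu\bigl(\{x\in\widetilde X\col\#((\scrP^x+\vecy)\cap\fC(\sigma))=r\}\bigr).
\end{equation*}
Since the point process $x\mapsto\scrP^x$ on $(\widetilde X,\widetilde\mu)$ is $\ASL(d,\R)$-invariant, hence invariant under $\scrP^x\mapsto\scrP^x+\vecy$, the right-hand side equals $\widetilde\mu(\{x\in\widetilde X\col\#(\scrP^x\cap\fC(\sigma))=r\})$. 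This proves the first and last members of \eqref{eq010}.

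Finally, for the middle equality I would show that a generic translate of $\scrP$ has no hidden points. If $\vecp_1\neq\vecp_2$ are positively proportional points of $\scrP+\vecy$, writing $\vecp_i=\vecq_i+\vecy$ with $\vecq_i\in\scrP$ and $\vecp_2=s\vecp_1$ ($s>0$) forces $\vecy=(\vecq_2-s\vecq_1)/(s-1)$, which lies on the line through $-\vecq_1$ and $-\vecq_2$; moreover $\vecnull\in\scrP+\vecy$ forces $\vecy\in-\scrP$. Since $\scrP$ is countable and $d\geq2$, the union $\fS_1\subset\R^d$ of all these lines is Lebesgue-null. Hence for $\vecy\notin\fS_0\cup\fS_1$ the set $\scrP+\vecy$ has no two distinct origin-collinear points and omits $\vecnull$, so every point of $\scrP+\vecy$ is visible: $\widehat{\scrP+\vecy}=\scrP+\vecy$. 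Therefore $\kappa_{\scrP+\vecy}=1$ and $E(r,\sigma,\widehat{\scrP+\vecy})=E(r,\sigma,\scrP+\vecy)$, and taking $\fS=\fS_0\cup\fS_1$ completes the proof.
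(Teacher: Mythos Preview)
Your proof is correct. For the outer equality $E(r,\sigma,\scrP+\vecy)=\widetilde\mu(\{x\in\widetilde X:\#(\scrP^x\cap\fC(\sigma))=r\})$ you follow exactly the route the paper indicates: realise $\scrP+\vecy$ via $g'=g(1_n,\delta^{-1/n}(\vecy,\vecnull))$, use Proposition~4.5 of \cite{qc} to get $H_{g'}=\widetilde H_g$ for a.e.\ $\vecy$, apply \eqref{eq01} of Theorem~\ref{visThm0}, and then remove the translation by the $\ASL(d,\R)$-invariance of the process on $(\widetilde X,\widetilde\mu)$.

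For the middle equality you take a more elementary path than the one the paper signals. The paper's argument is that translation-invariance of $x\mapsto\scrP^x$ on $(\widetilde X,\widetilde\mu)$ forces $\scrP^x=\hatP^x$ for $\widetilde\mu$-a.e.\ $x$; feeding this into \eqref{eq02} (and noting that it gives $\kappa_{\scrP+\vecy}=1$) yields the identity at the level of the random process. You instead prove directly that for Lebesgue-a.e.\ $\vecy$ the \emph{deterministic} set $\scrP+\vecy$ already satisfies $\widehat{\scrP+\vecy}=\scrP+\vecy$, because an origin-collinear pair in $\scrP+\vecy$ pins $\vecy$ onto one of countably many lines and $\vecnull\in\scrP+\vecy$ pins $\vecy$ into $-\scrP$. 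This sidesteps \eqref{eq02} and the computation of $\kappa_{\scrP+\vecy}$ altogether. Both routes are short; yours is more self-contained, while the paper's isolates a structural fact about the $\ASL(d,\R)$-invariant process that is of independent interest.
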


That is, all limit distributions are independent of $\vecy$ for Lebesgue-almost every $\vecy$.

\section{The Siegel-Veech formula for visible points}\label{sec:SV}

Throughout the remaining sections, we let $\scrP=\scrP(\scrW,\scrL)$ be a given regular cut-and-project set.
We fix $g\in G$ and $\delta>0$ so that $\scrL=\delta^{1/n}(\Z^ng)$.
In fact, by an appropriate scaling of the length units, we can assume without loss of generality that $\delta=1$.
This assumption will be in force throughout the remaining sections except the last one. 
Hence we now have $\scrP=\scrP(\scrW,\Z^ng)$ and $\scrP^x=\scrP(\scrW,\Z^nhg)$ for each $x=\Gamma h\in X$.

The following Siegel-Veech formulas will serve as a crucial technical tool in our proofs of the main theorems.

\begin{thm}\label{VEECHTHM}
For any $f\in\L^1(\R^d)$,
\begin{align}\label{VEECHTHMeq1}
\int_X\sum_{\vecq\in\scrP^x}f(\vecq)\,d{\mu}(x)=C_\scrP \int_{\R^d}f(\vecx)\,d\vecx
\end{align}
and
\begin{align}\label{VEECHTHMeq2}
\int_X\sum_{\vecq\in\hatP^x}f(\vecq)\,d{\mu}(x)=\kappa_\scrP\, C_\scrP \int_{\R^d}f(\vecx)\,d\vecx.
\end{align}
\end{thm}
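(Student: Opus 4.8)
The plan is to derive both formulas from the classical Siegel formula for $\ASL(n,\R)$, pushed onto the homogeneous space $X = \Gamma\backslash\Gamma H_g \cong (\Gamma\cap H_g)\backslash H_g$. First I would recall the Siegel summation formula on $\Gamma\backslash G$: for $F\in\L^1(\R^n)$, $\int_{\Gamma\backslash G}\sum_{\vecv\in\Z^n}F(\vecv h)\,dh = \int_{\R^n}F(\vecx)\,d\vecx$, after suitable normalization of Haar measure. The key observation is that the left-hand side of \eqref{VEECHTHMeq1} is, by definition of $\scrP^x = \scrP(\scrW,\Z^n h g)$ and the bijectivity of $\pi_\scrW$, equal to
\begin{equation*}
\int_X \sum_{\substack{\vecv\in\Z^n\\ \pi_\intl(\vecv h g)\in\scrW}} f(\pi(\vecv h g))\,d{\mu}(x)
= \int_X \sum_{\vecv\in\Z^n} \bigl(\mathbf{1}_\scrW\circ\pi_\intl\bigr)(\vecv h g)\, \bigl(f\circ\pi\bigr)(\vecv h g)\,d{\mu}(x).
\end{equation*}
So we are summing the function $F(\vecz):=\mathbf{1}_\scrW(\pi_\intl(\vecz))\,f(\pi(\vecz))$, composed with right translation by $g$, over the lattice orbit, and integrating over $X$. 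Writing $\widetilde F(\vecz):=F(\vecz g)$, this is $\int_X\sum_{\vecv\in\Z^n}\widetilde F(\vecv h)\,d{\mu}(h)$.

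The essential point is that the Siegel/Ratner machinery of \cite{qc} (specifically \cite[Thm.\ 5.3 or Prop.\ 4.4]{qc}, the Siegel formula on the invariant subspace $X\subset\Gamma\backslash G$) gives a formula of the shape $\int_X\sum_{\vecv\in\Z^n}\widetilde F(\vecv h)\,d{\mu}(h) = \int_{\R^n}\widetilde F(\vecz)\,d\nu(\vecz)$ for an explicit $H_g$-related measure $\nu$; in the present setting, because $\varphi_g(\SL(d,\R))\subset H_g$ and $\overline{\pi_\intl(\Z^n h g)}=\scrA$ for ${\mu}$-a.e.\ $h$, the contribution splits as Lebesgue measure on the physical $\R^d$-factor times $\mu_\scrA$ on the internal factor. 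Unwinding, this yields $\int_{\R^d}f(\vecx)\,d\vecx\cdot \mu_\scrA(\scrW)/\mu_\scrV(\scrV/\scrL_\scrV) = C_\scrP\int_{\R^d}f\,d\vecx$, which is \eqref{VEECHTHMeq1}. (One should check that the $\vecv=\vecnull$ term is harmless: $\pi(\vecnull)=\vecnull$ contributes only $f(\vecnull)$ times a null set, and indeed $\vecnull\notin\scrP$ unless $\bn\in\scrW$, a measure-zero concern that integrating against $f\in\L^1$ absorbs.)

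For \eqref{VEECHTHMeq2}, the plan is to write $\hatP^x$ in terms of $\scrP^x$ via a \emph{primitivity-type} indicator. A point $\vecy = \pi(\vecv h g)\in\scrP^x$ is visible iff $t\vecy\notin\scrP^x$ for all $t\in(0,1)$; since $\scrP^x$ is a cut-and-project set, $t\vecy\in\scrP^x$ forces $t\vecy=\pi(\vecv' h g)$ for some $\vecv'\in\Z^n$ with $\pi_\intl(\vecv'hg)\in\scrW$, and scaling in physical space corresponds to replacing $\vecv$ by $\frac1k\vecv$ along the ray, i.e.\ to divisibility of the $\Z^n$-component modulo the kernel of $\pi$. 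Thus $\sum_{\vecq\in\hatP^x}f(\vecq) = \sum_{\vecq\in\scrP^x}\chi^x(\vecq)f(\vecq)$ where $\chi^x$ is the visibility indicator, and one expands $\chi^x = \sum (\text{inclusion–exclusion over "sub-lattice" rescalings})$, exactly as in the $\gcd$-Möbius argument for $\Z^d$. Applying \eqref{VEECHTHMeq1} (or its $\SL(d,\R)$-equivariant refinement: the inner integral of $f$ composed with a linear scaling is homogeneous of the right degree, and $\SL(d,\R)$-invariance of ${\mu}$ handles the angular part) term-by-term, the sum telescopes to $\kappa_\scrP\,C_\scrP\int f$, with the ratio $\kappa_\scrP = \theta(\hatP)/\theta(\scrP)$ emerging as the value of the resulting convergent series; this is consistent with, and in fact can be taken as the definition-matching characterization of, $\theta(\hatP)$ from Theorem \ref{thm:main1}.

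\textbf{Main obstacle.} The delicate part is the combinatorial/measure-theoretic handling of visibility in \eqref{VEECHTHMeq2}: unlike $\Z^d$, a cut-and-project set need not satisfy $t\vecy\in\scrP \iff \vecy\in\scrP$ and $t^{-1}\in\N$ with a clean divisibility pattern, because the window $\scrW$ constrains the internal component and $\pi|_\scrL$ need not be injective. Making the inclusion–exclusion rigorous — identifying precisely which rescaled configurations $(\frac1k)\vecy$ can lie in $\scrP^x$, controlling convergence of the resulting series uniformly enough to interchange it with $\int_X$, and verifying that the limiting constant equals $\kappa_\scrP C_\scrP$ rather than merely being some finite multiple of $C_\scrP$ — is where the real work lies; I expect this to lean on the structure theory of $\scrA$, $\scrA^\circ$ and $\scrL_\scrV$ recalled in Section \ref{sec:two}, together with the a.e.\ equidistribution statement $\overline{\pi_\intl(\Z^n h g)}=\scrA$.
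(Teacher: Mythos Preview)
Your treatment of \eqref{VEECHTHMeq1} is in line with the paper, which simply cites the Siegel formula from \cite[Theorem 1.5]{qc}.

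For \eqref{VEECHTHMeq2}, however, your M\"obius/inclusion--exclusion plan is both far from the paper's argument and, as you yourself flag, genuinely obstructed. The difficulty you identify is real and not merely technical: for a point $\vecy=\pi(\vecv hg)\in\scrP^x$, the condition $t\vecy\in\scrP^x$ for some $0<t<1$ means there exists $\vecv'\in\Z^n$ with $\pi(\vecv'hg)=t\,\pi(\vecv hg)$ and $\pi_\intl(\vecv'hg)\in\scrW$. There is no reason for $\vecv'$ to be a rational rescaling of $\vecv$, nor for the set of admissible $t$'s to be indexed by divisors; the window $\scrW$ constrains the internal coordinate in a way that has no arithmetic substitute for the $\gcd$ condition on $\Z^d$. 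So the ``inclusion--exclusion over sub-lattice rescalings'' does not exist in any usable form here, and the step where ``the sum telescopes to $\kappa_\scrP C_\scrP\int f$'' has no content.

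The paper's proof of \eqref{VEECHTHMeq2} bypasses all of this with a two-line soft argument. Observe that $B\mapsto\int_X\#(\hatP^x\cap B)\,d\mu(x)$ is a Borel measure on $\R^d$ which is finite on compacta (dominated by the $\scrP^x$ version, i.e.\ by \eqref{VEECHTHMeq1}), gives no mass to $\{\bn\}$, and is $\SL(d,\R)$-invariant because the process $x\mapsto\hatP^x$ is. Any such measure is a constant multiple of Lebesgue measure; call the constant $\kappa_\scrP C_\scrP$. This gives \eqref{VEECHTHMeq2} for indicators and hence for all $f\in\L^1$, with $\kappa_\scrP\leq 1$ immediate and $\kappa_\scrP>0$ coming from a short openness argument (pick a bounded star-shaped $B$ so that $(B\setminus\{\bn\})\times\scrW^\circ$ meets $\scrL$). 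Crucially, the paper does \emph{not} identify this $\kappa_\scrP$ with the relative density $\theta(\hatP)/\theta(\scrP)$ at this stage; that identification is the content of Sections \ref{sec:five}--\ref{sec:nine} and requires the full machinery of the limit theorem \eqref{eq:main21}, the small-$\sigma$ asymptotics, and the continuity lemmas. Your plan to have the density ratio ``emerge as the value of the resulting convergent series'' would, even if the series existed, amount to proving Theorem \ref{thm:main1} inside the proof of Theorem \ref{VEECHTHM}, which is the reverse of the paper's logical flow.
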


Veech has proved formulas of the above type for general $\SL(d,\RR)$-invariant measures \cite[Thm.\ 0.12]{veech}. The proof of Theorem \ref{VEECHTHM} is simpler in the present setting.
Relation \eqref{VEECHTHMeq1} was proved in \cite[Theorem 1.5]{qc}.
In the present section we will prove that 
{\em there exists $0<\kappa_\scrP\leq1$ such that relation \eqref{VEECHTHMeq2} holds for all $f\in\L^1(\R^d)$}. We will then later establish that this $\kappa_\scrP$ indeed yields the relative density defined in \eqref{rel-dens}.

Consider the map
\begin{align}
B\mapsto\int_X\#(\hatP^x\cap B)\,d{\mu}(x)
\qquad (\text{$B$ any Borel subset of $\R^d$}).
\end{align}
This map defines a Borel measure on $\R^d$,
which is finite on any compact set $B$ (by \cite[Theorem 1.5]{qc}), 
invariant under $\SL(d,\R)$, 
and gives zero point mass to $\bn\in\R^d$. 
Hence up to a constant, the measure must equal Lebesgue measure,
i.e.\ there exists a constant $\kappa_\scrP\geq0$ such that
\begin{align}\label{KAPPADEFLEMRES}
\int_X\#(\hatP^x\cap B)\,d{\mu}(x)=\kappa_\scrP C_\scrP\vol(B)
\end{align}
for every Borel set $B\subset\R^d$.
By a standard approximation argument, this implies that \eqref{VEECHTHMeq2} holds for all $f\in\L^1(\R^d)$.
Also $\kappa_\scrP\leq1$ is immediate from \eqref{VEECHTHMeq1}.

It remains to verify that $\kappa_\scrP>0$.
Recall that we are assuming that $\scrW$ has non-empty interior $\scrW^\circ$ in $\scrA=\overline{\pi_\intl(\scrL)}$.
Now take $B$ to be any bounded open set in $\R^d$ which is star-shaped with center $\bn$ and such that
$(B\setminus\{\bn\})\times\scrW^\circ$ contains some point in the (affine) lattice $\scrL$.
Then the set of $x=\Gamma h$ in $X$ for which 
$\Z^n h g$ has at least one point in $(B\setminus\{\bn\})\times\scrW^\circ$
is non-empty and open.
Note that for any such $x$,
$\scrP^x=\scrP(\scrW,\Z^n h g)$ has a point in $B\setminus\{\bn\}$,
and hence also a \textit{visible} point in $B\setminus\{\bn\}$, since $B$ is star-shaped.
It follows that the left hand side of \eqref{KAPPADEFLEMRES} is positive for our set $B$.
Therefore $\kappa_\scrP>0$, as claimed.

\section{The limit distribution for small $\sigma$}\label{sec:five}

From now on we take $E(r,\sigma,\scrP)$ and $E(r,\sigma,\hatP)$ to be defined by the relations
\eqref{eq01}, \eqref{eq02}.
Then \eqref{eq:main21} holds by \cite[Thm.\ A.1]{qc},
and we will prove in Section \ref{sec:ten} that also \eqref{eq:main22} holds.

In the present section we will prove that the relation \eqref{eq:main23},
\begin{equation}\label{eq:main233}
E(0,\sigma,\scrP) = 1 - \kappa_\scrP \, \sigma +o(\sigma) ,
\end{equation}
holds with the same $\kappa_\scrP\in(0,1]$ as in the Siegel-Veech formula \eqref{VEECHTHMeq2}. Rel.~\eqref{eq:main24} is then a simple conseqence of the observation that
\begin{equation}\label{E0BASICREL}
E(0,\sigma,\hatP) = E(0,\kappa_\scrP^{-1} \sigma,\scrP) .
\end{equation}

To prove \eqref{eq:main233}, first note that, for any $\sigma>0$,
\begin{align}\notag
1-E(0,\sigma,\scrP)
=\mu\bigl(\bigl\{x\in X\col \scrP^x\cap\fC(\sigma)\neq\emptyset\bigr\}\bigr)
=\mu\bigl(\bigl\{x\in X\col \hatP^x\cap\fC(\sigma)\neq\emptyset\bigr\}\bigr)
\\\label{LEM1pf3}
\leq\int_X \#\bigl(\hatP^x\cap\fC(\sigma)\bigr)\,d{\mu}(x)
=\kappa_\scrP C_\scrP\vol(\fC(\sigma))=\kappa_\scrP \sigma,
\end{align}
where 
the integral was evaluated using \eqref{KAPPADEFLEMRES}.

On the other hand using the fact that the point process $\scrP^x$ ($x\in(X,\mu)$) is invariant under $\SO(d)$,
and $\widehat{\scrP' k}=\hatP' k$ for every point set $\scrP'$ and every $k\in\SO(d)$, we have
\begin{align}
1-E(0,\sigma,\scrP)=\int_X A(\sigma,\scrP^x)\,d{\mu}(x)
\end{align}
with
\begin{align}
A(\sigma,\scrP^x)=\int_{\SO(d)}I\Bigl(\hatP^x\cap\fC(\sigma)k\neq\emptyset\Bigr)\,dk,
\end{align}
where $dk$ is Haar measure on $\SO(d)$ normalized by $\int_{\SO(d)}\,dk=1$.

We write $\varphi(\vecp,\vecq)\in[0,\pi]$ for the angle between any two points $\vecp,\vecq\in\R^d\setminus\{\bn\}$,
as seen from $\bn$.
Also for any $x\in X$ 
we set
\begin{align}
\sigma_0(\scrP^x)=\frac{C_\scrP v_{d-1}}d \Bigl(\tan\frac{\varphi_0(\scrP^x)}2\Bigr)^{d-1}
\end{align}
where
\begin{align}
\varphi_0(\scrP^x)=\min\bigl\{\varphi(\vecp,\vecq)\col \vecp,\vecq\in\hatP^x\cap\scrB_1^d,\: \vecp\neq\vecq\bigr\},
\end{align}
with the convention that $\varphi_0(\scrP^x)=\pi$ and $\sigma_0(\scrP^x)=+\infty$ whenever
$\#(\hatP^x\cap\scrB_1^d)\leq1$.
These are measurable functions on $X$, and
$\varphi_0(\scrP^x)>0$ and $\sigma_0(\scrP^x)>0$ for all $x\in X$.

Now if $0<\sigma<\sigma_0(\scrP^x)$ then for any two distinct points $\vecp,\vecq\in\hatP^x\cap\scrB_1^d$
we have
\begin{align}
\varphi(\vecp,\vecq)>2\arctan\biggl(\Bigl(\frac{\sigma d}{C_\scrP v_{d-1}}\Bigr)^{1/(d-1)}\biggr),
\end{align}
and because of the definition of $\fC(\sigma)$, \eqref{FCCSDEF},
this implies that there does not exist any $k\in\SO(d)$ for which
$\fC(\sigma)k$ contains both $\vecp$ and $\vecq$.
Hence for $0<\sigma<\sigma_0(\scrP^x)$ we have
(writing $\vece_1=(1,0,\ldots,0)\in\R^d$)
\begin{align}\notag
A(\sigma,\scrP^x)\geq\sum_{\vecp\in\hatP^x\cap\scrB_1^d}\int_{\SO(d)} I\Bigl(\vecp\in\fC(\sigma)k\Bigr)\,dk
=\#\bigl(\hatP^x\cap\scrB_1^d\bigr)\cdot\int_{\SO(d)}I\Bigl(\vece_1\in\fC(\sigma)k\Bigr)\,dk
\\\label{ASIGMAPXBELOW}
=\frac{\vol(\fC(\sigma)\cap\scrB_1^d)}{\vol(\scrB_1^d)}\#\bigl(\hatP^x\cap\scrB_1^d\bigr),
\end{align}
and here
\begin{align}
\frac{\vol(\fC(\sigma)\cap\scrB_1^d)}{\vol(\scrB_1^d)}\sim
\frac{\vol(\fC(\sigma))}{\vol(\scrB_1^d)}=\frac{\sigma}{v_dC_\scrP}\qquad\text{as }\:\sigma\to0.
\end{align}
Hence given any number $K<(v_d C_\scrP)^{-1}$, there is some $\sigma(K)>0$ such that for all $0<\sigma<\sigma(K)$ we have
\begin{align}
1-E(0,\sigma,\scrP)=\int_X A(\sigma,\scrP^x)\,d{\mu}(x)
\geq K\sigma\int_X I\bigl(\sigma<\sigma_0(\scrP^x)\bigr)
\#\bigl(\hatP^x\cap\scrB_1^d\bigr)\,d{\mu}(x).
\end{align}
Furthermore, by the Monotone Convergence Theorem and \eqref{KAPPADEFLEMRES},
\begin{align}
\lim_{\sigma\to0}\int_X I\bigl(\sigma<\sigma_0(\scrP^x)\bigr)
\#\bigl(\hatP^x\cap\scrB_1^d\bigr)\,d{\mu}(x)
=\int_X\#\bigl(\hatP^x\cap\scrB_1^d\bigr)\,d{\mu}(\scrP^x)
=\kappa_\scrP C_\scrP v_d.
\end{align}
We thus conclude
\begin{align}\label{LEM1pf4}
\liminf_{\sigma\to0}\frac{1-E(0,\sigma,\scrP)}{\sigma}\geq K\kappa_\scrP C_\scrP v_d.
\end{align}
The claim \eqref{eq:main233} follows from \eqref{LEM1pf3} and the fact that \eqref{LEM1pf4} holds for every $K<(v_d C_\scrP)^{-1}$.

\section{Lower bound on the density of visible points}\label{sec:six}

Combining \eqref{eq:main233} and \eqref{eq:main21}
(recall that the latter 
was proved in \cite[Thm.\ A.1]{qc}), 
we get the following lower bound on the density $\theta(\hatP)=\kappa_\scrP  C_\scrP$ in Theorem \ref{thm:main1}:
\begin{lem}\label{LEM2}
Let $\fU$ be any subset of $\S_1^{d-1}$ with boundary of measure zero (w.r.t.\ $\omega$),
and let $\scrD=\{\vecv\in\R^d\col 0<\|\vecv\|<1,\:\|\vecv\|^{-1}\vecv\in\fU\}$ be the corresponding sector in $\scrB_1^d$.
Then
\begin{align}
\liminf_{T\to\infty}\frac{\#(\hatP\cap T\scrD)}{T^d}\geq\kappa_\scrP  C_\scrP \vol(\scrD).
\end{align}
\end{lem}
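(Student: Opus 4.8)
The plan is to deduce Lemma \ref{LEM2} from the already-established small-$\sigma$ asymptotics \eqref{eq:main233} and the fine-scale limit \eqref{eq:main21} (equivalently \eqref{eq01}), by relating the \emph{count} of visible points in a thin sector to the \emph{probability} of having an empty small disc of directions. The heuristic is that \eqref{eq:main233} says that, in the random model, a small cone $\fC(\sigma)$ is empty with probability $1-\kappa_\scrP\sigma+o(\sigma)$, so its expected occupancy is at least $\kappa_\scrP\sigma+o(\sigma)$; pushing this through the equidistribution on $\S_1^{d-1}$ yields a lower bound of $\kappa_\scrP C_\scrP\vol(\scrD)$ for the number of \emph{visible} directions in a small angular window, and then additivity over a partition of $\fU$ into many small pieces upgrades this to the stated lower bound for arbitrary $\fU$.

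First I would set up the dictionary between the quantities in \S\ref{sec:five} and the counting function $\#(\hatP\cap T\scrD)$. Fix $\vecv\in\S_1^{d-1}$ and a small disc $\fD$ around it; for $T$ large, the visible points $\vecy\in\hatP$ with $\|\vecy\|<T$ and $\|\vecy\|^{-1}\vecy\in\fD$ are exactly those $\vecy$ lying in the thin cone $T\cdot\widetilde{\fC}$, where $\widetilde{\fC}$ is $\fC(\sigma)$ with appropriate $\sigma$ (up to the $\SO(d)$-rotation sending $\vece_1$ to $\vecv$), by the very definition \eqref{FCCSDEF} of $\fC(\sigma)$. So the number of visible directions in a small window equals $\scrN_T(\sigma,\vecv,\hatP)$ of \eqref{disc00} with $\hatP$ in place of $\scrP$. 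The key point is that by \eqref{eq:main22} (whose proof is deferred to \S\ref{sec:ten}, but which we are entitled to assume)
\[
\lambda(\{\vecv:\scrN_T(\sigma,\vecv,\hatP)=0\})\to E(0,\sigma,\hatP),
\]
and by \eqref{E0BASICREL} and \eqref{eq:main233} we have $1-E(0,\sigma,\hatP)=1-E(0,\kappa_\scrP^{-1}\sigma,\scrP)=\sigma+o(\sigma)$. Thus the fraction of directions $\vecv$ for which the small window around $\vecv$ contains \emph{at least one} visible point is $\sigma+o(\sigma)$.

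Next I would integrate over $\vecv\in\S_1^{d-1}$. Choosing $\lambda$ to be normalized surface measure $\omega/\omega(\S_1^{d-1})$, and writing $\fU$ as a disjoint union of small caps $\fU_1,\dots,\fU_N$ of surface measure $\omega(\fU_j)$, the number $\#(\hatP\cap T\scrD)$ equals $\sum_j\#\{\vecy\in\hatP_T:\|\vecy\|^{-1}\vecy\in\fU_j\}$. For each cap one has a lower bound of the form: the count is at least (number of $\vecv$-windows inside $\fU_j$ that are non-empty) $\times\,1$; combined with a Fubini/averaging argument over rotations (as in \eqref{ASIGMAPXBELOW}) and the fact that $\vol(\fC(\sigma)\cap\scrB_1^d)\sim\vol(\fC(\sigma))$, letting $T\to\infty$ first and then the mesh of the partition $\to0$ gives
\[
\liminf_{T\to\infty}\frac{\#(\hatP\cap T\scrD)}{T^d}\ \geq\ \kappa_\scrP\,C_\scrP\,\vol(\scrD).
\]
Concretely, I would fix $K<(v_dC_\scrP)^{-1}$ as in \S\ref{sec:five}, apply the bound $1-E(0,\sigma,\hatP)\geq K\sigma\cdot(\text{density factor})$ obtained there to each cap, sum, and let $K\uparrow(v_dC_\scrP)^{-1}$; the constant $K\cdot(\kappa_\scrP C_\scrP v_d)\to\kappa_\scrP$ produces exactly the claimed coefficient after multiplying by $\vol(\scrD)$.

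The main obstacle I anticipate is the interchange of limits: one must take $T\to\infty$ for a \emph{fixed} small disc size (so that \eqref{eq:main22} applies) before shrinking the disc, and one must control the discretization error coming from covering the sector $\scrD$ by finitely many thin cones $\fC(\sigma)k$ — these cones overlap near the apex and leave gaps near the boundary sphere, so the passage from "fraction of non-empty windows" to "number of points" requires a Vitali-type covering of $\fU$ together with the boundary-measure-zero hypothesis on $\fU$ to ensure the error terms are negligible. This is routine but needs care; everything else is bookkeeping built on \eqref{eq:main233}, \eqref{eq:main21}--\eqref{eq:main22}, and \eqref{E0BASICREL}.
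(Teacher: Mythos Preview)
Your overall strategy is the paper's strategy, but there is a genuine circularity in the way you set it up. You invoke \eqref{eq:main22}, saying its proof ``is deferred to \S\ref{sec:ten}, but which we are entitled to assume''. You are not. The very statement of \eqref{eq:main22} involves $\scrN_T(\sigma,\vecv,\hatP)$, whose disc $\fD_T(\sigma,\vecv)$ is scaled by $\theta(\hatP)$; and the proof in \S\ref{sec:ten} (see the passage from $\fD_T(\kappa_\scrP^{-1}\sigma,\vecv)$ to $\fC(\kappa_\scrP^{-1}\sigma'')$) tacitly uses $\theta(\hatP)=\kappa_\scrP C_\scrP$, which is precisely Theorem \ref{thm:main1}, whose proof in \S\ref{sec:nine} rests on Lemma \ref{LEM2}. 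So feeding \eqref{eq:main22} into the proof of Lemma \ref{LEM2} closes a loop.

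The paper avoids this by a single observation: for the event ``the small window is nonempty'' it does not matter whether you count all of $\scrP$ or only $\hatP$, since $\scrN_T(\sigma,\vecv,\scrP)>0$ if and only if some \emph{visible} point of $\scrP$ lands in $\fD_T(\sigma,\vecv)$ (the nearest point in that direction is visible). Hence one can work entirely with $\scrN_T(\sigma,\vecv,\scrP)$ and invoke \eqref{eq:main21} (already available from \cite[Thm.\ A.1]{qc}), together with \eqref{eq:main233}; no appeal to \eqref{eq:main22} is needed. Concretely: restrict $\lambda$ to an $\varepsilon$-thinning $\fU_\varepsilon^-$ of $\fU$ so that for large $T$ every window $\fD_T(\sigma,\vecv)$ with $\vecv\in\fU_\varepsilon^-$ sits inside $\fU$; then the union bound gives
\[
\lambda(\{\vecv:\scrN_T(\sigma,\vecv,\scrP)>0\})\leq\frac{\sigma d}{\omega(\fU_\varepsilon^-)\,C_\scrP T^d}\,\#(\hatP\cap T\scrD),
\]
and letting $T\to\infty$, then $\sigma\to0$, then $\varepsilon\to0$ yields the lemma directly. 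Your partition-into-caps / Vitali-covering layer is unnecessary; the $\varepsilon$-thinning plus a single union bound handles the boundary issues in one stroke.
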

\begin{proof}
We may assume $\omega(\fU)>0$, since otherwise $\vol(\scrD)=0$ and the lemma is trivial.
Let $\ve>0$ be given, and let $\fU_\ve^-\subset\S_1^{d-1}$ be the ``$\ve$-thinning'' of $\fU$,
that is
\begin{align}
\fU_\ve^-=\bigl\{\vecv\in\S_1^{d-1}\col \bigl[\varphi(\vecw,\vecv)<\ve\Rightarrow\vecw\in\fU\bigr],\:\forall\vecw\in\S_1^{d-1}
\bigr\}.
\end{align}
(Recall that $\varphi(\vecw,\vecv)\in[0,\pi]$ is the angle between $\vecw$ and $\vecv$ as seen from $\bn$.)
Then $\omega(\fU_\ve^-)\to\omega(\fU)$ as $\ve\to0$,
since $\fU$ by assumption is a Jordan measurable subset of $\S_1^{d-1}$.
From now on we assume that $\ve$ is so small that $\omega(\fU_\ve^-)>0$.
We let $\lambda$ be 
$\omega$ restricted to $\fU_\ve^-$ and normalized to be a probability measure;
thus $\lambda(B)=\omega(\fU_\ve^-)^{-1}\omega(B\cap\fU_\ve^-)$ for any Borel subset $B\subset\S_1^{d-1}$.

Now note that, by the definitions of $\scrN_{T}(\sigma,\vecv,\scrP)$ and $\hatP$,
for any $\sigma>0$, $T>0$ and $\vecv\in\S_1^{d-1}$
we have $\scrN_{T}(\sigma,\vecv,\scrP)>0$ if and only if there is some $\vecy\in\hatP\cap\scrB_T^d$ such that
$\|\vecy\|^{-1}\vecy\in\fD_T(\sigma,\vecv)$.
Furthermore, if $T$ is larger than a certain constant depending on $\sigma,\scrP,\ve$, then
$\fD_T(\sigma,\vecv)\subset\fU$ for every $\vecv\in\fU_\ve^-$,
meaning that $\|\vecy\|^{-1}\vecy\in\fD_T(\sigma,\vecv)$ implies $\vecy\in\R_{>0}\scrD$.
Hence for such $T$ and $\sigma$ we have
\begin{align}\notag
\lambda(\{\vecv\in\S_1^{d-1}\col\scrN_{T}(\sigma,\vecv,\scrP)>0\})
=\lambda\bigl(\bigl\{\vecv\in\S_1^{d-1}\col \bigl[\exists\vecy\in\hatP\cap\scrB_T^d
\col\|\vecy\|^{-1}\vecy\in\fD_T(\sigma,\vecv)\bigr]\bigr\}\bigr)
\hspace{40pt}
\\\notag
\leq\sum_{\vecy\in\hatP\cap T\scrD}
\lambda\bigl(\bigl\{\vecv\in\S_1^{d-1}\col \|\vecy\|^{-1}\vecy\in\fD_T(\sigma,\vecv)\bigr\}\bigr)
\leq\frac{\omega(\fD_T(\sigma,\vece_1))}{\omega(\fU_\ve^-)}\cdot\#\bigl(\hatP\cap T\scrD\bigr)
\hspace{20pt}
\\
=\frac{\sigma d}{\omega(\fU_\ve^-)C_\scrP T^d}\cdot\#\bigl(\hatP\cap T\scrD\bigr).
\end{align}
Hence, letting $T\to\infty$ and applying \eqref{eq:main21} we have, for any fixed 
$\sigma>0$,
\begin{align}
\liminf_{T\to\infty}\frac{\#\hatP\cap T\scrD}{T^d}\geq
\frac{\omega(\fU_\ve^-) C_\scrP}{d}\cdot\frac{1-E(0,\sigma,\scrP)}{\sigma}.
\end{align}
Letting $\sigma\to0$ in the right hand side and using \eqref{eq:main233}, this gives
\begin{align}
\liminf_{T\to\infty}\frac{\#\hatP\cap T\scrD}{T^d}\geq
\kappa_\scrP C_\scrP\frac{\omega(\fU_\ve^-)}{d}.
\end{align}
Finally letting $\ve\to0$ and using $\omega(\fU)/d=\vol(\scrD)$ we obtain the statement of the lemma.
\end{proof}


\section{Continuity in the space of cut-and-project sets}\label{sec:seven}

Next, in Lemma \ref{KEYCONTLEM} and Lemma \ref{KEYCONTLEM2}, we will prove that for almost all $x\in X$,
both $\scrP^x$ and $\hatP^x$ 
\textit{vary continuously} as we perturb $x$.

\begin{lem}\label{ALLORALMOSTNONELEM}
For any $\vecm\in\R^n$, if $\pi(\vecm hg)\neq\bn$ for some $h\in H_g$ then $\pi(\vecm hg)\neq\bn$
for ${\mu}$-almost all $h\in H_g$.
Similarly, for any $\vecm,\vecn\in\R^n$, if
$\dim\Span\{\pi(\vecn hg),\pi(\vecm hg)\}=2$ for some $h\in H_g$
then $\dim\Span\{\pi(\vecn hg),\pi(\vecm hg)\}=2$ for ${\mu}$-almost all $h\in H_g$.
\end{lem}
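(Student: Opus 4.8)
The plan is to deduce both assertions from one soft fact: if $f_1,\dots,f_k$ are real-analytic functions on a connected real-analytic manifold $M$, then their common zero set is either all of $M$ or a set of measure zero with respect to any smooth measure on $M$ (indeed, if some $f_i\not\equiv 0$ its individual zero set is already null). This applies here because $H_g$ is a connected Lie group, hence a connected real-analytic manifold, and $\mu$ — in its incarnation as Haar measure on $H_g$ — is given by a smooth left-invariant density, so its null sets are exactly the sets that are null in every chart. Both conditions appearing in the lemma will be recognized as the common vanishing of finitely many real-analytic functions of $h\in H_g$.

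For the first statement, fix $\vecm\in\R^n$ and consider $F\col H_g\to\R^d$, $h\mapsto\pi(\vecm hg)$. The affine action of $G=\ASL(n,\R)$ on $\R^n$ is given by formulas that are polynomial in the matrix entries and the translation components of the acting element, so $h\mapsto\pi(\vecm hg)$ is the restriction to $H_g$ of a polynomial map $G\to\R^d$; since $H_g\hookrightarrow G$ is a real-analytic embedding, each component $F_i$ is real-analytic on $H_g$. Thus $\{h\in H_g\col\pi(\vecm hg)=\bn\}=\bigcap_{i=1}^d F_i^{-1}(0)$ is a common zero set of finitely many real-analytic functions on the connected manifold $H_g$; by hypothesis it is not all of $H_g$, hence it is $\mu$-null, which is precisely the claim.

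For the second statement, fix $\vecm,\vecn\in\R^n$ and let $A(h)$ denote the $2\times d$ matrix with rows $\pi(\vecn hg)$ and $\pi(\vecm hg)$. Then $\dim\Span\{\pi(\vecn hg),\pi(\vecm hg)\}\leq 1$ holds exactly when every $2\times 2$ minor of $A(h)$ vanishes, and each such minor is a polynomial in the entries of $A(h)$, hence — by the reasoning of the previous paragraph — a real-analytic function of $h\in H_g$. So the set where $\dim\Span\{\cdots\}\leq 1$ is again the common zero set of finitely many real-analytic functions on the connected manifold $H_g$; by hypothesis it is proper, hence $\mu$-null, so the complementary set where the dimension equals $2$ has full measure. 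The only step that calls for a little care is the justification that $h\mapsto\vecm hg$ is genuinely real-analytic on $H_g$, which rests on $H_g$ being a (closed) Lie subgroup of $G$ together with the explicit polynomial form of the affine $G$-action, and on recalling the cited property of analytic zero sets on connected manifolds; neither is a genuine obstacle.
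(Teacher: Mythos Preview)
Your argument is correct and essentially identical to the paper's own proof: both invoke the principle that a real-analytic function on the connected manifold $H_g$ either vanishes identically or on a null set, apply it to the coordinate functions $h\mapsto\pi(\vecm hg)\cdot\vece_j$ for the first claim, and to the $2\times2$ minors of the matrix with rows $\pi(\vecn hg),\pi(\vecm hg)$ for the second. Your write-up merely adds more justification for the analyticity and for why the Haar measure has the right null sets.
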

\begin{proof}
$H_g$ is a connected, real-analytic manifold; hence any real-analytic function on $H_g$ which 
does not vanish identically is non-zero almost everywhere.
The first part of the lemma follows by applying this principle to the coordinate functions
$h\mapsto \pi(\vecm hg)\cdot\vece_j$ for $j=1,\ldots,d$.
The second part of the lemma follows by applying the same principle to the functions
\begin{align}
h\mapsto (\pi(\vecm hg)\cdot\vece_i)(\pi(\vecn hg)\cdot\vece_j)-(\pi(\vecm hg)\cdot\vece_j)(\pi(\vecn hg)\cdot\vece_i),
\end{align}
for $1\leq i<j\leq d$.
\end{proof}

\begin{lem}\label{KEYCONTLEM}
For ${\mu}$-almost every $x\in X$, and for every bounded open set $U\subset\R^d$ with $\scrP^x\cap\partial U=\emptyset$,
there is an open set $\Omega\subset X$ with $x\in\Omega$ 
such that $\#(\scrP^{x'}\cap U)=\#(\scrP^x\cap U)$ for all $x'\in\Omega$.
\end{lem}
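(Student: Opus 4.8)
The plan is to reduce the statement to an analysis of which lattice points of $\Z^n h g$ land inside the "slab" $U \times \scrW$, and to show that for almost every $x$ this finite configuration is stable under small perturbations of $h$. First I would fix a bounded open $U$; since $\scrP^x = \scrP(\scrW, \Z^n h g)$ and we assumed $\pi_\scrW$ is bijective, the quantity $\#(\scrP^x \cap U)$ equals $\#\{\vecm \in \Z^n : \pi(\vecm h g) \in U,\ \pi_\intl(\vecm h g) \in \scrW\}$. The relevant vectors $\vecm$ are confined to a fixed compact region of $\R^n$ (depending on $U$, $\scrW$, and a neighbourhood of $h$), because $\Z^n h g$ is a lattice and $\pi$ restricted to it together with the window constraint forces discreteness and local finiteness; so only finitely many $\vecm$ ever matter, uniformly over a small neighbourhood $\Omega_0$ of $x$. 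For each such finite set of $\vecm$, the map $h \mapsto \pi(\vecm h g)$ is continuous (indeed real-analytic), so if none of these finitely many points lies on $\partial U$ and none has $\pi_\intl(\vecm h g) \in \partial \scrW$, then the count is locally constant.

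The two things that could go wrong, and which the "almost every $x$" hedge is designed to handle, are: (a) some lattice point $\pi(\vecm h g)$ sits exactly on $\partial U$; and (b) some lattice point has internal part $\pi_\intl(\vecm h g)$ exactly on $\partial \scrW$. Issue (a) is excluded by the hypothesis $\scrP^x \cap \partial U = \emptyset$ built into the lemma, but one must be slightly careful: a point $\vecm h g$ with $\pi_\intl(\vecm h g) \in \partial\scrW$ contributes nothing to $\scrP^x$ yet could cross into $\scrW$ under perturbation and land inside $U$, changing the count discontinuously. This is the main obstacle, and the way to kill it is to invoke regularity of $\scrP$: $\scrW$ has boundary of $\mu_\scrA$-measure zero, and I would show that for $\mu$-almost every $x$ the lattice $\Z^n h g$ has no point whose internal projection lies on $\partial\scrW$. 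This follows from a Fubini/Siegel-type argument: the set of bad $h$ is a countable union (over $\vecm \in \Z^n$) of preimages under the real-analytic maps $h \mapsto \pi_\intl(\vecm h g)$ of the measure-zero set $\partial\scrW$; I would argue each such preimage has $\mu$-measure zero, either directly (the pushforward of $\mu$ under $h \mapsto \pi_\intl(\vecm h g)$ is absolutely continuous with respect to $\mu_\scrA$ on $\scrA$ — this is essentially the content of \cite[Prop.\ 3.5]{qc} that $\overline{\pi_\intl(\Z^n h g)} = \scrA$ for $\mu$-a.e.\ $h$, combined with the Siegel formula \eqref{VEECHTHMeq1} applied in a suitable way), or by the real-analyticity dichotomy of Lemma \ref{ALLORALMOSTNONELEM}: for each $\vecm$, either $h \mapsto \pi_\intl(\vecm h g)$ is constant (a degenerate case one can rule out or handle separately) or its image meets any fixed measure-zero set in a measure-zero preimage.

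So the key steps, in order, are: (1) rewrite $\#(\scrP^x \cap U)$ as a count of lattice points of $\Z^n h g$ in $U \times \scrW$ using bijectivity of $\pi_\scrW$; (2) show local finiteness — there is a neighbourhood $\Omega_0 \ni x$ and a finite set $F \subset \Z^n$ such that for all $x' = \Gamma h' \in \Omega_0$, every contributing $\vecm$ lies in $F$ (uses uniform discreteness of the lattices, which vary continuously, and boundedness of $U$ and $\scrW$); (3) identify the null set $\scrE \subset X$ off which no lattice point has internal projection on $\partial\scrW$, via the measure-zero argument above; (4) for $x \notin \scrE$ with $\scrP^x \cap \partial U = \emptyset$, observe that each of the finitely many $\vecm \in F$ satisfies $\pi(\vecm h g) \notin \partial U$ and $\pi_\intl(\vecm h g) \notin \partial\scrW$, hence by continuity of the finitely many maps $h' \mapsto (\pi(\vecm h' g), \pi_\intl(\vecm h' g))$ there is a smaller neighbourhood $\Omega \subset \Omega_0$ on which, for each $\vecm \in F$, the pair $(\pi(\vecm h' g), \pi_\intl(\vecm h' g))$ stays on the same side of $\partial U$ and of $\partial\scrW$ as at $h$; (5) conclude $\#(\scrP^{x'} \cap U) = \#(\scrP^x \cap U)$ for all $x' \in \Omega$. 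I expect step (3) — pinning down the null set and justifying that $\pi_\intl$-preimages of $\partial\scrW$ are $\mu$-null — to be the only substantive point; everything else is continuity and local finiteness bookkeeping.
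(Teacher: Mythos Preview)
Your approach is the same as the paper's: identify a $\mu$-null set of bad $x$, then for good $x$ use local finiteness and continuity of the finitely many maps $h' \mapsto \vecm h' g$. There is, however, a genuine gap in step (1). You claim $\#(\scrP^x \cap U) = \#\{\vecm \in \Z^n : \pi(\vecm h g) \in U,\ \pi_\intl(\vecm h g) \in \scrW\}$ because ``$\pi_\scrW$ is bijective''; but that bijectivity assumption was made only for the fixed $\scrP = \scrP(\scrW, \Z^n g)$, not for $\scrP^x = \scrP(\scrW, \Z^n h g)$ with arbitrary $h \in H_g$. For a given $h$ two distinct $\vecm_1, \vecm_2 \in \Z^n$ may both land in $U \times \scrW$ yet satisfy $\pi(\vecm_1 h g) = \pi(\vecm_2 h g)$; then $\scrP^x \cap U$ has one point where your lattice count sees two, and a small perturbation of $h$ generically separates the projections, making $\#(\scrP^{x'} \cap U)$ jump. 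The paper handles this by adjoining a second null set
\[
S_2 = \bigl\{h \in H_g : \exists\, \vecell_1 \neq \vecell_2 \in \Z^n h g \cap \pi_\intl^{-1}(\scrW)\ \text{with}\ \pi(\vecell_1) = \pi(\vecell_2)\bigr\},
\]
with $\mu(S_2) = 0$ by \cite[Prop.~3.7]{qc}; for $h \notin S_2$ the contributing projections $\pi(\vecm h g)$, $\vecm\in F$, are pairwise distinct, and by continuity this persists nearby, so the lattice count really equals $\#(\scrP^{x'}\cap U)$ throughout a neighbourhood.

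A smaller edge case you also miss: there may be one exceptional $\vecm_E \in \Z^n$ with $\vecm_E h g = \bn$ for \emph{all} $h \in H_g$ (this happens when $H_g \subset g\,\SL(n,\R)\,g^{-1}$). Then $\pi_\intl(\vecm_E h g) = \bn$ identically, so if $\bn \in \partial\scrW$ your measure-zero argument in step (3) fails for this vector---the preimage is all of $H_g$, not a null set. The paper excludes $\vecm_E$ from the definition of its null set $S_1$ and treats it separately, which is easy since its image is constant in $h$.
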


\begin{proof}
For each $\vecm\in\Z^n$, by an argument as in Lemma \ref{ALLORALMOSTNONELEM} we either have
$\vecm hg\neq\bn$ for almost all $h\in H_g$ or else $\vecm hg=\bn$ for all $h\in H_g$.
By taking $h=1$ we see that the latter property can hold for at most one $\vecm\in\Z^n$,
and if it holds then we necessarily have $\vecm=\bn g^{-1}$, 
and $H_g\subset g\SL(n,\R)g^{-1}$. 
If such an exceptional $\vecm$ exists we call it $\vecm_E$, and we set 
$(\Z^n)':=\Z^n\setminus\{\vecm_E\}$; otherwise we set $(\Z^n)':=\Z^n$.

Now consider the following two subsets of $H_g$:
\begin{align}
&S_1=\bigl\{h\in H_g\col
(\Z^n)'hg\cap(\R^d\times\partial\scrW)\neq\emptyset\bigr\};
\\
&S_2=\bigl\{h\in H_g\col 
\exists 
\vecell_1\neq\vecell_2\in\Z^nhg\cap\pi_\intl^{-1}(\scrW)
\:\text{ satisfying }\:\pi(\vecell_1)=\pi(\vecell_2)\bigr\}.
\end{align}
We have ${\mu}(S_1)=0$, by \cite[Theorem 5.1]{qc}.
Also ${\mu}(S_2)=0$, by 
\cite[Prop.\ 3.7]{qc} applied to $\scrW^\circ$.
We will prove the lemma by showing that 
for every $h\in H_g\setminus(S_1\cup S_2)$,
the point $x=\Gamma h\in X$ has the property described in the lemma.

Thus let $h\in H_g\setminus(S_1\cup S_2)$ be given, set $x=\Gamma h\in X$, and 
let $U$ be an arbitrary bounded open subset of $\R^d$ with boundary disjoint from $\scrP^x=\scrP(\scrW,\Z^nhg)$.
Assume that the desired property does \textit{not} hold.
Then there is a sequence $h_1,h_2,\ldots$ in $H_g$ tending to $h$ such that 
\begin{align}\label{KEYCONTLEMPF1}
\#(\scrP(\scrW,\Z^nh_jg)\cap U)\neq\#(\scrP(\scrW,\Z^nhg)\cap U),\qquad\forall j.
\end{align}
Let $F$ be the (finite) set 
\begin{align}\label{KEYCONTLEMPF2}
F=\bigl\{\vecm\in\Z^n\col\vecm hg\in U\times\scrW\bigr\}.
\end{align}
Note that $\vecm hg\in U\times\scrW^\circ$ for every $\vecm\in F\cap(\Z^n)'$, since $h\notin S_1$.
But $U\times\scrW^\circ$ is open; hence by continuity we also have
$\vecm h'g\in U\times\scrW^\circ$ for every $h'\in H_g$ sufficiently near $h$ and all $\vecm\in F\cap(\Z^n)'$.
Note also that if the exceptional point $\vecm_E$ exists and belongs to $F$ then
$\bn=\vecm_E h'g\in U\times\scrW$ for \textit{all} $h'\in H_g$.
Hence, for every $h'\in H_g$ near $h$ we have
\begin{align}
\scrP(\scrW,\Z^nh'g)\supset \{\pi(\vecm h'g)\col \vecm\in F\}.
\end{align}

Because of $h\notin S_2$, the points $\pi(\vecm hg)$ for $\vecm\in F$ are pairwise distinct.
By continuity it then also follows that for any $h'\in H_g$ sufficiently near $h$, the points $\pi(\vecm h'g)$ for 
$\vecm\in F$ are pairwise distinct.
Hence $\#(\scrP(\scrW,\Z^nhg)\cap U)=\# F$ and $\#(\scrP(\scrW,\Z^nh'g)\cap U)\geq\# F$ for every $h'$ near $h$.
Therefore in \eqref{KEYCONTLEMPF1}, the left hand side must be \textit{larger} than $\#F$, for all large $j$.
Hence for each large $j$ there is some $\vecm\in\Z^n\setminus F$ such that
$\vecm h_jg\in U\times\scrW$.
But for any compact $C\subset H_g$ the set $\cup_{h'\in C}(U\times\scrW)g^{-1}{h'}^{-1}$ is bounded and hence has
finite intersection with $\Z^n$.
Therefore there is a bounded number of possibilities for $\vecm$ as $j$ varies, and by passing to a subsequence we
may assume that $\vecm$ is independent of $j$.

Now for our fixed $\vecm\in\Z^n\setminus F$ we have $\vecm h_jg\in U\times\scrW$ for all $j$, 
but $\vecm h_jg\to\vecm  hg\notin U\times\scrW$ as $j\to\infty$;
this forces $\vecm  hg\in\partial(U\times\scrW)$,  
and it also implies that we cannot have $\vecm =\vecm_E$.  
But $\pi_{\intl}(\vecm  hg)\notin\partial\scrW$ since $h\notin S_1$,
and thus we must have $\pi(\vecm hg)\in\partial U$.
Note also that $\pi_{\intl}(\vecm  hg)$ cannot belong to the exterior of $\scrW$,
since then the same would hold for $\pi_{\intl}(\vecm  h_jg)$ for $j$ large, contradicting $\vecm h_jg\in U\times\scrW$.
Hence $\pi_{\intl}(\vecm hg)$ must belong to the interior of $\scrW$;
therefore $\pi(\vecm hg)\in\scrP^x=\scrP(\scrW,\Z^nhg)$.
This contradicts our assumption that $\scrP^x$ is disjoint from $\partial U$, and so the lemma is proved.
\end{proof}

\begin{lem}\label{KEYCONTLEM2}
For ${\mu}$-almost every $x\in X$, and for every bounded open set $U\subset\R^d$ with 
$\hatP^x\cap \partial U=\emptyset$,
there is an open set $\Omega\subset X$ with $x\in\Omega$ 
such that $\#(\hatP^{x'}\cap U)=\#(\hatP^x\cap U)$ for all $x'\in\Omega$.
\end{lem}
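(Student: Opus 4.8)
The plan is to reduce the statement about the visible set $\hatP^x$ to the statement about the full cut-and-project set $\scrP^x$ already proved in Lemma \ref{KEYCONTLEM}, together with the ray-structure that defines visibility. The key observation is that whether a point $\vecq\in\scrP^x$ is visible depends only on which other points of $\scrP^x$ lie on the open ray $(0,1)\vecq$ between $\bn$ and $\vecq$, and that this ray issue is governed by the two-dimensional collinearity condition $\dim\Span\{\pi(\vecn hg),\pi(\vecm hg)\}=2$ studied in Lemma \ref{ALLORALMOSTNONELEM}.

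First I would fix a $\mu$-conull set of $x=\Gamma h$ on which the desired property should hold. Namely, let $h\in H_g\setminus(S_1\cup S_2)$ (so that Lemma \ref{KEYCONTLEM} applies and the points $\pi(\vecm hg)$ are as controlled there), and in addition require that $h$ avoid a third null set $S_3$ defined so that for all $\vecm,\vecn\in\Z^n$ the quantity $\dim\Span\{\pi(\vecn hg),\pi(\vecm hg)\}$ is already equal to its generic value $2$ whenever that value can be $2$ at all; this is possible by Lemma \ref{ALLORALMOSTNONELEM} applied to each of the countably many pairs $(\vecm,\vecn)$, together with the same treatment of the exceptional point $\vecm_E$ (if $\vecm_E$ exists, $\bn$ is collinear with every point, but $\bn\notin\hatP^x$, so it is harmless as long as we track it separately as in Lemma \ref{KEYCONTLEM}). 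Thus for such $h$, two distinct nonzero points of $\scrP^x$ are either genuinely non-collinear with $\bn$, or lie on a common line through $\bn$ for \emph{all} $h'\in H_g$.

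Next, given a bounded open $U$ with $\hatP^x\cap\partial U=\emptyset$, choose a large closed ball $\bar\scrB$ containing $\bar U$, so that visibility of any point in $U$ is decided by points of $\scrP^x$ inside $\bar\scrB$; let $V$ be a bounded open set slightly larger than $\bar\scrB$ whose boundary avoids the (finite, by the Delone property of $\scrP^x$) set $\scrP^x\cap\bar\scrB$. Apply Lemma \ref{KEYCONTLEM} to both $U$ and $V$: there is an open $\Omega_0\ni x$ on which $\#(\scrP^{x'}\cap U)$ and $\#(\scrP^{x'}\cap V)$ are constant, and moreover (shrinking $\Omega_0$, using the continuity argument in the proof of Lemma \ref{KEYCONTLEM}) the finitely many points of $\scrP^{x'}\cap V$ move continuously and remain in bijection with those of $\scrP^x\cap V$ via $\vecm\mapsto\pi(\vecm h'g)$. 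Now for each pair of points $\pi(\vecm hg),\pi(\vecn hg)$ in $\scrP^x\cap V$ with $\pi(\vecm hg)$ on the open segment $(0,1)\pi(\vecn hg)$: by the choice of $S_3$ this collinearity persists, i.e.\ $\pi(\vecm h'g)\in\R_{>0}\pi(\vecn h'g)$ for all $h'\in H_g$, and by continuity of the positive scalar the point stays in the \emph{open} segment $(0,1)\pi(\vecn h'g)$ for $h'$ near $h$; conversely, if $\pi(\vecm hg)$ is \emph{not} collinear with $\bn$ and $\pi(\vecn hg)$, it stays non-collinear for $h'$ near $h$. Since visibility in $U$ for each point of $\scrP^{x'}\cap V$ is a Boolean combination of finitely many such collinearity/segment conditions, we conclude that each point of $\scrP^x\cap U$ is visible for $x$ iff the corresponding point is visible for $x'$, so $\hatP^{x'}\cap U$ is in bijection with $\hatP^x\cap U$ and in particular $\#(\hatP^{x'}\cap U)=\#(\hatP^x\cap U)$ on a suitable open $\Omega\subset\Omega_0$ containing $x$.

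The main obstacle I anticipate is the bookkeeping at the boundary $\partial U$: Lemma \ref{KEYCONTLEM} is stated for $U$ with $\scrP^x\cap\partial U=\emptyset$, whereas here we only assume $\hatP^x\cap\partial U=\emptyset$, so there may be \emph{non-visible} points of $\scrP^x$ on $\partial U$ that could wander in or out of $U$ as $x'$ varies. One must check that such a point, being non-visible for $x$, remains non-visible — and more importantly, that its motion across $\partial U$ does not change $\#(\hatP^{x'}\cap U)$. This is handled by enlarging $U$ to $V$ as above and working with the finite configuration $\scrP^x\cap V$ rather than $\scrP^x\cap U$ directly: the count $\#(\hatP^{x'}\cap U)$ is then expressed purely in terms of the continuously-moving finite point configuration in $V$ and the stable collinearity pattern, so no point of $\scrP^{x'}$ can cross $\partial U$ without also crossing $\partial U$ as tracked inside $V$, and the hypothesis $\hatP^x\cap\partial U=\emptyset$ guarantees that the only points of $\scrP^x$ on $\partial U$ are non-visible, hence excluded from the count for all nearby $x'$.
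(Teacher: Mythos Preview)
Your approach is correct and essentially matches the paper's: both define the same null sets via Lemma \ref{ALLORALMOSTNONELEM} (the paper calls them $S_3,S_4$), invoke Lemma \ref{KEYCONTLEM} to track a continuously-moving finite configuration, and exploit stability of collinearity to preserve visibility status for each point in that configuration. The only substantive difference is in handling the obstacle that the hypothesis gives only $\hatP^x\cap\partial U=\emptyset$ rather than $\scrP^x\cap\partial U=\emptyset$: the paper first replaces $U$ by nearby sets $U'$, $U''$ (obtained by adding or removing small balls around the finitely many invisible points of $\scrP^x$ on $\partial U$) whose boundaries avoid $\scrP^x$, and then argues by contradiction via a sequence $h_j\to h$; you instead pass to a larger star-shaped $V\supset\overline U$ with $\scrP^x\cap\partial V=\emptyset$ and argue directly that any point of $\scrP^x$ on $\partial U$, being invisible, remains invisible and so cannot affect the count. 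Both reductions work equally well. One small correction: you should invoke Lemma \ref{KEYCONTLEM} only for $V$, not for $U$ itself, since its boundary hypothesis is unavailable for $U$; your final paragraph correctly does not actually rely on that application, so this does not affect the argument.
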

\begin{proof}
Let $\vecm_E$, $(\Z^n)'$, $S_1$ and $S_2$ be as in the proof of Lemma \ref{KEYCONTLEM}.
Also set
\begin{align*}
&S_3=\bigl\{h\in H_g\col \exists 
\vecm\in\Z^n,\: h'\in H_g\:\text{ satisfying }\:
\pi(\vecm hg)=\bn,\:\pi(\vecm h'g)\neq\bn\bigr\}
\\[5pt]
&S_4=\bigl\{h\in H_g\col \exists 
\vecm,\vecn\in\Z^n,\: h'\in H_g\:\text{ satisfying }\:
\dim\Span\{\pi(\vecn hg),\pi(\vecm hg)\}\leq1
\\
&\hspace{220pt}\text{and }\:\dim\Span\{\pi(\vecn h'g),\pi(\vecm h'g)\}=2\bigr\}.
\end{align*}
Using Lemma \ref{ALLORALMOSTNONELEM} and the fact that $\Z^n$ is countable,
we have ${\mu}(S_3)={\mu}(S_4)=0$.

Now let $h\in H_g\setminus(S_1\cup S_2\cup S_3\cup S_4)$ be given, set $x=\Gamma h\in X$, and 
let $U$ be an arbitrary bounded open subset of $\R^d$ with boundary disjoint from 
$\hatP^x=\hatP(\scrW,\Z^nhg)$.
Assume that there is a sequence $h_1,h_2,\ldots$ in $H_g$ tending to $h$ such that 
\begin{align}\label{KEYCONTLEM2PF1}
\#(\hatP(\scrW,\Z^nh_jg)\cap U)\neq\#(\hatP(\scrW,\Z^nhg)\cap U),\qquad\forall j.
\end{align}
We will show that this leads to a contradiction, and this will complete the proof of the lemma
(cf.\ the proof of Lemma \ref{KEYCONTLEM}).

As an initial reduction, let us note that we may assume $\scrP^x\cap \partial U=\emptyset$.
Indeed, recall that $\scrP^x$ is locally finite (cf.\ \cite[Prop.\ 3.1]{qc});
hence the set $A=\scrP^x\cap \partial U$ is certainly finite.
Also every point in $A$ is invisible in $\scrP^x$, since we are assuming $\hatP^x\cap\partial U=\emptyset$.
If $A\neq\emptyset$ then 
fix $r>0$ so small that $(\vecp+\scrB_{2r}^d)\cap\scrP^x=\{\vecp\}$ for each $\vecp\in A$, and set
$U'=U\cup(\cup_{\vecp\in A}(\vecp+\scrB_r^d))$ and $U''=U\setminus(\cup_{\vecp\in A}(\vecp+\overline{\scrB_r^d}))$.
These are bounded open sets satisfying $\#(\hatP^x\cap U')=\#(\hatP^x\cap U'')=\#(\hatP^x\cap U)$
and $\scrP^x\cap\partial U'=\scrP^x\cap\partial U''=\emptyset$.
For each $j$ we must have either $\#(\hatP(\scrW,\Z^nh_jg)\cap U')>\#(\hatP^x\cap U)$
or $\#(\hatP(\scrW,\Z^nh_jg)\cap U'')<\#(\hatP^x\cap U)$,
because of $U''\subset U\subset U'$ and \eqref{KEYCONTLEM2PF1}.
Hence after replacing $U$ by $U'$ or $U''$, and passing to a subsequence,
we are in a situation where \eqref{KEYCONTLEM2PF1} holds, and also  $\scrP^x\cap \partial U=\emptyset$.

Now take $F$ as in \eqref{KEYCONTLEMPF2}; it then follows from the proof of Lemma \ref{KEYCONTLEM} that
$\#(\scrP^x\cap U)=\# F$ and also 
$\#(\scrP(\scrW,\Z^nh_jg)\cap U)=\# F$ for every large $j$.
Hence \eqref{KEYCONTLEM2PF1} implies that for every large $j$ there is some $\vecm\in F$ such that
either $\pi(\vecm h_jg)$ is visible in $\scrP(\scrW,\Z^nh_jg)$ but $\pi(\vecm hg)$ is invisible in $\scrP^x$,
or the other way around.
Since $F$ is finite we may assume, by passing to a subsequence, that $\vecm$ is independent of $j$.

First assume that $\pi(\vecm hg)$ is invisible in $\scrP^x$
but $\pi(\vecm h_jg)$ is visible in $\scrP(\scrW,\Z^nh_jg)$ for every large $j$.
In particular then $\pi(\vecm h_jg)\neq\bn$ for large $j$, and since $h\notin S_3$ this implies $\pi(\vecm hg)\neq\bn$.
The invisibility of $\pi(\vecm hg)$ means that there exist $\vecn\in\Z^n$ and $0<t<1$ 
such that $\pi_\intl(\vecn hg)\in\scrW$ and $\pi(\vecn hg)=t\pi(\vecm hg)$.
Now $\pi_\intl(\vecn hg)\in\scrW$ and $h\notin S_1$ force $\pi_\intl(\vecn hg)\in\scrW^\circ$;
hence $\pi_\intl(\vecn h_jg)\in\scrW^\circ$ for all large $j$ and so $\pi(\vecn h_jg)\in\scrP(\scrW,\Z^nh_jg)$.
On the other hand $\dim\Span\{\pi(\vecn hg),\pi(\vecm hg)\}=1$ together with $h\notin S_4$ imply 
$\dim\Span\{\pi(\vecn h'g),\pi(\vecm h'g)\}\leq1$ for \textit{all} $h'\in H_g$.
Using also $h_j\to h$, $\pi(\vecm hg)\neq0$ and $0<t<1$, this implies
that for every large $j$ there is $0<t_j<1$ such that $\pi(\vecn h_jg)=t_j\pi(\vecm h_j g)$.
Hence $\pi(\vecm h_jg)$ is invisible in $\scrP(\scrW,\Z^nh_jg)$ for every large $j$,
contradicting our earlier assumption.


It remains to treat the case when $\pi(\vecm hg)$ is visible in $\scrP^x$
but $\pi(\vecm h_jg)$ is invisible in $\scrP(\scrW,\Z^nh_jg)$ for every large $j$.
Then for every large $j$ there exist $\vecn\in\Z^n$ and $0<t_j<1$ such that
$\pi_\intl(\vecn h_jg)\in\scrW$ and $\pi(\vecn h_jg)=t_j\pi(\vecm h_jg)$.
It is easily seen that there are only a finite number of possibilities for $\vecn$,
and hence by passing to a subsequence we may assume that $\vecn$ is independent of $j$.
Since $\pi(\vecm hg)$ is visible in $\scrP^x$ we have $\pi(\vecm hg)\neq\bn$;
hence also $\pi(\vecm h_jg)\neq\bn$ for all large $j$, and this forces $\vecn\neq\vecm$.
Also $\pi(\vecm h_jg)\to\pi(\vecm hg)\neq\bn$ and $t_j\pi(\vecm h_jg)=\pi(\vecn h_jg)\to\pi(\vecn hg)$ 
imply that $t=\lim_{j\to\infty}t_j\in[0,1]$ exists, and $\pi(\vecn hg)=t\pi(\vecm hg)$.
Using $h\notin S_1$ and $\pi_\intl(\vecn h_jg)\in\scrW$ 
it follows that also $\pi_\intl(\vecn hg)\in\scrW$ and so $\pi(\vecn hg)\in\scrP^x$.
Using $h\notin S_3$ and $\pi(\vecn h_jg)\neq\bn$ for $j$ large, it follows that $\pi(\vecn hg)\neq\bn$;
furthermore using $h\notin S_2$ we have $\pi(\vecn hg)\neq\pi(\vecm hg)$.
Hence $0<t<1$, and so $\pi(\vecm hg)$ is invisible in $\scrP^x$, contradicting our earlier assumption.
\end{proof}

\section{Upper bound on the density of visible points}\label{sec:eight}

We are now in position to prove an upper bound complementing Lemma \ref{LEM2}.

\begin{lem}\label{LEM3}
We have ${\displaystyle
\lim_{T\to\infty}\frac{\#(\hatP\cap \scrB_T^d)}{T^d}=\kappa_\scrP  C_\scrP v_d}$.
\end{lem}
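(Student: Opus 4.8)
The plan is to bootstrap from the already-established results: Lemma \ref{LEM2} gives a lower bound on $\liminf_{T\to\infty}\#(\hatP\cap\scrB_T^d)/T^d$, namely $\kappa_\scrP C_\scrP v_d$ (take $\fU=\S_1^{d-1}$ there, so $\vol(\scrD)=v_d$), so it remains only to prove the matching upper bound on $\limsup_{T\to\infty}\#(\hatP\cap\scrB_T^d)/T^d$. For this I would run the standard equidistribution-plus-Siegel-Veech argument in the dynamical picture. The basic idea: by Weyl equidistribution / Ratner-type equidistribution of the orbit $\Gamma\backslash\Gamma\varphi_g(\SL(d,\R))$ in $X$ (this is the machinery underlying \cite[Thm.\ A.1]{qc}, which is already invoked in the paper for $\eqref{eq:main21}$), the counting function $\#(\hatP\cap\scrB_T^d)/T^d$ can be expressed, up to $o(1)$ errors, as an integral over $X$ of a counting function $x\mapsto\#(\hatP^x\cap\scrB_1^d)$ against $\mu$, provided the latter is a sufficiently nice (e.g.\ bounded and a.e.-continuous) function on $X$. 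By the Siegel-Veech formula \eqref{KAPPADEFLEMRES} with $B=\scrB_1^d$, that integral equals $\kappa_\scrP C_\scrP v_d$, which is exactly the desired value.

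More concretely, the steps I would carry out are: (1) Fix a large parameter $R$ and decompose $\#(\hatP\cap\scrB_T^d) = \#(\hatP\cap\scrB_R^d) + \#(\hatP\cap(\scrB_T^d\setminus\scrB_R^d))$; the first term is negligible after dividing by $T^d$. For the annular part, use the $\SO(d)$-invariance of the process together with a dyadic (or $\SL(d,\R)$-orbit) decomposition into shells of the form $a_t\scrB_1^d\setminus a_{t'}\scrB_1^d$, reducing to understanding, for each fixed expanding linear map, the quantity $\#(\hatP\cap g_T\scrB_1^d)$ where $g_T\in\SL(d,\R)$ is a diagonal expansion chosen so that $g_T$ moves the unit ball out near radius $T$. (2) Translate this into the homogeneous-space language: $\#(\hatP\cap g_T\scrB_1^d)$ equals $\#(\hatP^{x_T}\cap\scrB_1^d)$ where $x_T$ is the image of a fixed point under the flow $\varphi_g(g_T^{-1})$ acting on $X$. (3) Invoke equidistribution of $\{x_T\}$ in $(X,\mu)$ as $T\to\infty$ — this is precisely the equidistribution result on which \cite[Thm.\ A.1]{qc} rests — to conclude that the appropriately averaged count converges to $\int_X\#(\hatP^x\cap\scrB_1^d)\,d\mu(x)$. (4) Justify passage to the limit inside the integral: the function $x\mapsto\#(\hatP^x\cap\scrB_1^d)$ is integrable by the Siegel-Veech formula \eqref{KAPPADEFLEMRES}, and it is $\mu$-almost-everywhere continuous by Lemma \ref{KEYCONTLEM2} (applied with $U$ a slightly enlarged ball so that $\hatP^x\cap\partial U=\emptyset$ for a.e.\ $x$), so equidistribution of test points against this class of functions is legitimate after a truncation/uniform-integrability argument. (5) Evaluate $\int_X\#(\hatP^x\cap\scrB_1^d)\,d\mu(x)=\kappa_\scrP C_\scrP v_d$ using \eqref{KAPPADEFLEMRES} with $B=\scrB_1^d$, and combine with Lemma \ref{LEM2} to squeeze the limit.

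The main obstacle is step (4): controlling the tail contributions uniformly in $T$, i.e.\ ruling out an escape of mass where the count $\#(\hatP^{x_T}\cap\scrB_1^d)$ picks up a non-negligible contribution from a shrinking-measure set of $x$ in $X$ (or from points very near $\partial\scrB_1^d$). This is exactly the kind of uniform-integrability / non-concentration estimate that makes counting-via-equidistribution arguments delicate; in the present setting it should follow from the Siegel-Veech bound applied to a shell $\scrB_{1+\eta}^d\setminus\scrB_{1-\eta}^d$ (whose $\mu$-expected count is $O(\eta)$ and hence uniformly small) together with the local-finiteness and continuity properties of $\hatP^x$ from Section \ref{sec:seven}. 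A clean way to organize this, which I would adopt, is: bound $\#(\hatP\cap\scrB_T^d)$ above by $\#(\scrP\cap\scrB_T^d)$ (trivially, since $\hatP\subset\scrP$) only as a sanity check, but for the sharp constant instead apply the already-proved statement \eqref{eq:main21} together with \eqref{eq:main233} in the same manner as in the proof of Lemma \ref{LEM2} — running that argument with the roles reversed (using $\scrN_T(\sigma,\vecv,\scrP)\le\#(\cdots)$ replaced by a lower bound $\scrN_T(\sigma,\vecv,\scrP)\ge \#\{\text{directions landing in }\fD_T\}$ over a thickened $\fU$) yields $\limsup_{T\to\infty}\#(\hatP\cap\scrB_T^d)/T^d\le\kappa_\scrP C_\scrP v_d$, completing the proof.
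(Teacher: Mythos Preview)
Your step (4) is where the argument breaks, and neither of your proposed fixes repairs it. Integrability and a.e.\ continuity of $x\mapsto\#(\hatP^x\cap\scrB_1^d)$ with respect to the \emph{limit} measure $\mu$ do not suffice to pass an unbounded test function through equidistribution; you would need uniform integrability along the equidistributing orbit, i.e.\ control of the contribution from $\{x:\#(\hatP^x\cap\scrB_1^d)>M\}$ that is uniform in the flow parameter. Your Siegel--Veech estimate on a thin shell $\scrB_{1+\eta}^d\setminus\scrB_{1-\eta}^d$ addresses only boundary effects, not this tail, and no second-moment formula is available here. The ``reverse Lemma~\ref{LEM2}'' idea cannot work as stated either: the key inequality in Lemma~\ref{LEM2} is a union bound, $\lambda(\{\scrN_T>0\})\le\sum_{\vecy}\lambda(\cdots)$, which bounds the probability from \emph{above} in terms of $\#(\hatP\cap T\scrD)$ and thereby yields a \emph{lower} bound on the count. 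Reversing the direction would require a \emph{lower} bound on the measure of a union, which inclusion--exclusion provides only after controlling the pairwise overlaps---essentially a pair-correlation estimate for directions of visible points, which you do not have.

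The paper circumvents uniform integrability by switching to the \emph{invisible} points $\widetilde\scrP^x=\scrP^x\setminus\hatP^x$ and seeking a \emph{lower} bound on $\liminf_{\tau\to\infty}\tau^{-d}\#(\widetilde\scrP\cap\scrB_\tau^d)$; by complementation (using the known density \eqref{density000} of $\scrP$ together with Lemma~\ref{LEM2}) this is equivalent to the missing upper bound for $\hatP$. For the lower bound one replaces the count by its lower-semicontinuous envelope $F(x)=\liminf_{x'\to x}\#(\widetilde\scrP^{x'}\cap\scrB_1^d)$: the Portmanteau inequality $\liminf\int F\ge\int_X F\,d\mu$ holds for nonnegative lower semicontinuous $F$ with no boundedness hypothesis, and Lemmata~\ref{KEYCONTLEM} and \ref{KEYCONTLEM2} give $F(x)=\#(\widetilde\scrP^x\cap\scrB_1^d)$ for $\mu$-a.e.\ $x$, so the right side equals $(1-\kappa_\scrP)C_\scrP v_d$ by Theorem~\ref{VEECHTHM}. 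There is a further wrinkle you have not confronted: the equidistribution input \cite[Thm.~4.1]{qc} concerns $k\mapsto\Gamma\varphi_g(k\Phi^{\log R})$ averaged over $\SO(d)$, so the left side involves counts inside the \emph{ellipsoid} $\scrB_1^d\Phi^{-\log R}$ rather than a ball. The paper rewrites that $\SO(d)$-average as a Stieltjes integral $-\int_0^\infty\widetilde N(\tau)\,dA_R(\tau)$ and then argues by contradiction: if $\tau^{-d}\widetilde N(\tau)$ dropped below $(1-\kappa_\scrP)C_\scrP v_d$ along a sequence, a suitable choice of $R$ would force this weighted integral strictly below the Portmanteau lower bound.
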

\begin{proof}
For any $\scrP'\subset\R^d$, let us write $\widetilde\scrP'=\scrP'\setminus\hatP'$
for the set of \textit{in}visible points in $\scrP'$.
Define $F:X\to\Z_{\geq0}$ through
\begin{align}
F(x)=\liminf_{x'\to x}\:\#(\widetilde\scrP^{x'}\cap\scrB_1^d).
\end{align}
Then $F$ is lower semicontinuous by construction.
Hence by \cite[Thm.\ 4.1]{qc} and the Portmanteau theorem (cf.,\ e.g., \cite[Thm.\ 1.3.4(iv)]{wellner}),
\begin{align}\label{LEM3PF10}
\liminf_{R\to\infty}\int_{\SO(d)}F(\Gamma\varphi_g(k\Phi^{\log R}))\,dk\geq\int_{X}F\,d{\mu},
\end{align}
with
\begin{align}\label{PHITDEF}
\Phi^t=\matr{e^{-(d-1)t}}{\bn}{\trans\bn}{e^t1_{d-1}}\in\SL(d,\R).
\end{align}

Now in the left hand side of \eqref{LEM3PF10}, we use the fact that for any 
$x=\Gamma\varphi_g(T)$, $T\in\SL(d,\R)$, we have
\begin{align}
F(x)\leq \#(\widetilde\scrP^x\cap\scrB_1^d)=
\#(\widetilde\scrP(\scrW,\Z^n\varphi_g(T)g)\cap\scrB_1^d)
=\#(\widetilde\scrP\cap\scrB_1^d T^{-1}).
\end{align}
In the right hand side of \eqref{LEM3PF10} we note that if $x=\Gamma h$ has both the 
continuity properties described in Lemmata \ref{KEYCONTLEM} and \ref{KEYCONTLEM2},
and if furthermore $\scrP^x\cap\S_1^{d-1}=\emptyset$,
then in fact $F(x)=\#(\widetilde\scrP^x\cap\scrB_1^d)$.
But these conditions are fulfilled for ${\mu}$-almost all $x\in X$
(concerning $\scrP^x\cap\S_1^{d-1}=\emptyset$, use \cite[Thm.\ 1.5]{qc}).
Hence it follows from \eqref{LEM3PF10} that
\begin{align}\label{LEM3PF11}
\liminf_{R\to\infty}\int_{\SO(d)}\#\bigl(\widetilde\scrP\cap\scrB_1^d\Phi^{-\log R}k^{-1}\bigr)\,dk
\geq\int_X\#\bigl(\widetilde\scrP^x\cap\scrB_1^d\bigr)\,d{\mu}(x)
=(1-\kappa_\scrP)C_\scrP v_d,
\end{align}
where the last equality holds by Theorem \ref{VEECHTHM}. 

But exactly as in the proof of Theorem 5.1 in \cite{qc},
we have for any $R>1$
\begin{align}
\int_{\SO(d)}\#(\widetilde\scrP\cap\scrB_1\Phi^{-\log R}k^{-1})\,dk
=\sum_{\vecp\in\widetilde\scrP}A_R(\|\vecp\|)
=\int_{0}^\infty A_R(\tau)\,d\widetilde N(\tau)
=-\int_{0}^\infty \widetilde N(\tau)\,d A_R(\tau),
\end{align}
where
\begin{align}
\widetilde N(T)=\#(\widetilde\scrP\cap\scrB_T^d),
\end{align}
and $A_R$ is the continuous and decreasing function from $\R_{\geq0}$ to $[0,1]$ given by
$A_R(0)=1$ and 
\begin{align}
A_R(\tau)=\frac{\omega\bigl(\S_1^{d-1}\:\cap\:\tau^{-1}\scrB_1^d\Phi^{-\log R}\bigr)}{\omega(\S_1^{d-1})}
\qquad \text{for }\:\tau>0.
\end{align}
(Thus $A_R(\tau)=1$ for $0\leq\tau\leq R^{-1}$ and $A_R(\tau)=0$ for $\tau\geq R^{d-1}$.)
Hence \eqref{LEM3PF11} says that
\begin{align}\label{LEM3PF12}
\liminf_{R\to\infty}\int_{0}^\infty \widetilde N(\tau)\,\bigl(-d A_R(\tau)\bigr)
\geq C':=(1-\kappa_\scrP)C_\scrP v_d.
\end{align}

In view of \eqref{density000} and Lemma \ref{LEM2} (with $\scrD=\scrB_1^d$), 
the statement of the present lemma is equivalent with 
$\liminf_{\tau\to\infty}\tau^{-d}\widetilde N(\tau)\geq C'$.
Assume that this is \textit{false}.
Then there is some $\eta>0$ and a sequence $1<\tau_1<\tau_2<\cdots$ with $\tau_j\to\infty$
such that $\widetilde N(\tau_j)<(1-\eta) C' \tau_j^d$ for all $j$.
Using the fact that $\widetilde N(\tau)$ is an increasing function of $\tau$ we see that by shrinking $\eta>0$ if necessary,
we may actually assume that $\widetilde N(\tau)<(1-\eta) C' \tau^d$ for all $\tau\in[(1-\eta)\tau_j,\tau_j]$ and all $j$.
By Lemma \ref{LEM2} and \eqref{density000} we have
$\limsup_{\tau\to\infty}\tau^{-d}\widetilde N(\tau)\leq C'$;
thus for any given $\ve>0$ there is some $\tau_0>0$ such that $\widetilde N(\tau)\leq(1+\ve)C'\tau^d$ for all $\tau\geq \tau_0$.
Now for any $j$ with $(1-\eta)\tau_j>\tau_0$, and any $R>\tau_j^{1/(d-1)}$:
\begin{align}
\int_{0}^\infty \widetilde N(\tau)\,\bigl(-d A_R(\tau)\bigr)
\leq \int_0^{\tau_0}\widetilde N(\tau)(-dA_R(\tau)) &+(1+\ve)C'\int_{\tau_0}^{R^{d-1}}\tau^d\,(-dA_R(\tau))
\\\notag
&-(\ve+\eta) C'\int_{(1-\eta)\tau_j}^{\tau_j}\tau^d\,(-dA_R(\tau)).
\end{align}
Here the sum of the first two terms tends to $(1+\ve)C'$ as $R\to\infty$,
as in \cite[(5.11)-(5.13)]{qc}.
Furthermore, if we choose $R=(2\tau_j)^{1/(d-1)}$ and let $j\to\infty$ then 
\begin{align}
\int_{(1-\eta)\tau_j}^{\tau_j}\tau^d\,(-dA_R(\tau))
=\frac d{\omega(\S_1^{d-1})}\vol\Bigl(\scrB_1^d\Phi^{-\log R}\cap\scrB_{\frac12R^{d-1}}^d\setminus
\scrB_{\frac12(1-\eta)R^{d-1}}^d\Bigr)
\\\notag
\to\frac{2v_{d-1}}{v_d}\int_{(1-\eta)/2}^{1/2}(1-x^2)^{(d-1)/2}\,dx.
\end{align}
Hence we conclude that there is a constant $c(\eta)>0$, independent of $\ve$, such that
\begin{align}
\liminf_{R\to\infty}\int_{0}^\infty \widetilde N(\tau)\,\bigl(-d A_R(\tau)\bigr)\leq(1+\ve-c(\eta))C'.
\end{align}
Letting now $\ve\to0$ we run into a contradiction against \eqref{LEM3PF12}.
This concludes the proof of the lemma.
\end{proof}

\section{Proof of Theorem \ref{thm:main1}}\label{sec:nine}

Combining Lemma \ref{LEM2} and Lemma \ref{LEM3} we can now complete the proof of Theorem \ref{thm:main1}.
First let $\fU$, $\scrD$ be as in Lemma \ref{LEM2}.
Then by Lemma \ref{LEM2} applied to $\S_1^{d-1}\setminus\fU$,
\begin{align}
\liminf_{T\to\infty}\frac{\#(\hatP\cap \scrB_T^d\setminus T\scrD)}{T^d}\geq\kappa_\scrP  C_\scrP 
\bigl(v_d-\vol(\scrD)\bigr).
\end{align}
Combining this with Lemma \ref{LEM3} we get
\begin{align}
\limsup_{T\to\infty}\frac{\#(\hatP\cap T\scrD)}{T^d}=
\limsup_{T\to\infty}\biggl(\frac{\#(\hatP\cap\scrB_T^d)}{T^d}-
\frac{\#(\hatP\cap \scrB_T^d\setminus T\scrD)}{T^d}\biggr)
\leq\kappa_\scrP C_\scrP\vol(\scrD).
\end{align}
Combining this with Lemma \ref{LEM2} (applied to $\fU$ itself) we conclude 
\begin{align}\label{KPPF1}
\lim_{T\to\infty}\frac{\#(\hatP\cap T\scrD)}{T^d}=\kappa_\scrP C_\scrP\vol(\scrD).
\end{align}
By a scaling and subtraction argument it follows that \eqref{KPPF1} is true more generally for any 
$\scrD\in\scrF$, where $\scrF$ is the family of sets of the form 
$\scrD=\{\vecv\in\R^d\col r_1\leq \|\vecv\|<r_2,\:\vecv\in\|\vecv\|\fU\}$,
for any $0\leq r_1<r_2$ and 
any $\fU\subset\S_1^{d-1}$ with $\omega(\partial\fU)=0$.

Now let $\scrD$ be an arbitrary subset of $\R^d$ with boundary of measure zero.
Note that the validity of \eqref{KPPF1} does not change if we replace $\scrD$ by $\scrD\cup\{\bn\}$
or by $\scrD\setminus\{\bn\}$. 
The proof of Theorem \ref{VEECHTHM} is now completed by approximating
$\scrD\cup\{\bn\}$ from above and $\scrD\setminus\{\bn\}$ from below by finite unions of sets in $\scrF$.

\section{Proof of Theorem \ref{thm:main2}}\label{sec:ten}

Recall that \eqref{eq:main21} was proved in \cite[Thm.\ A.1]{qc}
and we have proved \eqref{eq:main23} and \eqref{eq:main24} in Section~\ref{sec:five}.
Also the continuity of $E(r,\sigma,\scrP)$ and $E(r,\sigma,\hatP)$ with respect to $\sigma$
is immediate from \eqref{eq01}, \eqref{eq02} combined with Theorem \ref{VEECHTHM}.
Hence it remains to prove \eqref{eq:main22}.


Thus let $\lambda$ be a Borel probability measure on $\S_1^{d-1}$ which is absolutely continuous with respect to $\omega$,
and let $\sigma>0$ and $r\in\ZZ_{\geq 0}$.
Let us fix, once and for all, a map $K:\S_1^{d-1}\to\SO(d)$ such that
$\vecv K(\vecv)=\vece_1=(1,0,\ldots,0)$ for all $\vecv\in\S_1^{d-1}$;
we assume that $K$ is smooth when restricted to $\S_1^{d-1}$ minus
one point, cf.~\cite[Footnote 3, p.\ 1968]{partI}.
Recall the definitions of $\fC(\sigma)$ and $\Phi^t$ in \eqref{FCCSDEF} and \eqref{PHITDEF}.

On verifies that if $\sigma',\sigma'',\alpha$ are any fixed numbers satisfying $0<\sigma'<\sigma<\sigma''$
and $\sigma'/\sigma<\alpha <1$,
then for any $\vecv\in\S_1^{d-1}$ and all sufficiently large $T$,
the set of $\vecy\in\scrB_T^d\setminus\{\bn\}$ satisfying 
$\|\vecy\|^{-1}\vecy\in\fD_T(\kappa_\scrP^{-1}\sigma,\vecv)$ 
is \textit{contained} in $\fC(\kappa_\scrP^{-1}\sigma'')\Phi^{-(\log T)/(d-1)}K(\vecv)^{-1}$,
and \textit{contains} $\fC(\kappa_\scrP^{-1}\sigma')\Phi^{-(\log(\alpha T))/(d-1)}K(\vecv)^{-1}$.
It follows that 
\begin{align}\notag
\lambda\bigl(\bigl\{\vecv\in\S_1^{d-1}\col \#\bigl(\hatP\cap 
\fC(\kappa_\scrP^{-1}\sigma'')\Phi^{-(\log T)/(d-1)}K(\vecv)^{-1}\bigr)\leq r\bigr\}\bigr)
\hspace{80pt}
\\\label{MAINVISTHMPF2}
\leq\lambda\bigl(\bigl\{\vecv\in\S_1^{d-1}\col\scrN_{T}(\sigma,\vecv,\hatP)\leq r\bigr\}\bigr)
\hspace{190pt}
\\\notag
\leq\lambda\bigl(\bigl\{\vecv\in\S_1^{d-1}\col \#\bigl(\hatP\cap 
\fC(\kappa_\scrP^{-1}\sigma')\Phi^{-(\log(\alpha T))/(d-1)}K(\vecv)^{-1}\bigr)\leq r\bigr\}\bigr).
\end{align}

Recalling the definition of $\scrP=\scrP(\scrW,\Z^n g)$ we see that
$\hatP A=\hatP(\scrW,\Z^n\varphi_g(A)g)$ for any $A\in\SL(d,\R)$.
Hence if we define
\begin{align}
\scrE(\sigma,r)=\bigl\{x\in X\col 
\#\bigl(\hatP^x 
\cap\fC(\kappa_\scrP^{-1}\sigma)\bigr)\leq r\bigr\},
\end{align}
then the left hand side in \eqref{MAINVISTHMPF2} equals
\begin{align}
\lambda\bigl(\bigl\{\vecv\in\S_1^{d-1}\col 
\Gamma \varphi_g\bigl(K(\vecv)\Phi^{(\log T)/(d-1)}\bigr)\in \scrE(\sigma'',r) 
\bigr\}\bigr)
\end{align}
Hence by \cite[Thm.\ 4.1]{qc} and the Portmanteau theorem:
\begin{align}\label{MAINTHMPF12}
\liminf_{T\to\infty}\lambda\bigl(\bigl\{\vecv\in\S_1^{d-1}\col\scrN_{T}(\sigma,\vecv,\hatP)\leq r\bigr\}\bigr)
\geq \mu\bigl(\scrE(\sigma'',r)^\circ\bigr)
=\mu\bigl(\scrE(\sigma'',r)\bigr).
\end{align}
Here the last equality is proved by using Lemma \ref{KEYCONTLEM2} with $U=\fC(\kappa_\scrP^{-1}\sigma'')$,
and noticing that Theorem \ref{VEECHTHM} implies that
$\hatP^x\cap\partial U=\emptyset$ for $\mu$-almost all $x\in X$.
Similarly, using the right relation in \eqref{MAINVISTHMPF2}, we obtain
\begin{align}\label{MAINTHMPF13}
\limsup_{T\to\infty}\lambda\bigl(\bigl\{\vecv\in\S_1^{d-1}\col\scrN_{T}(\sigma,\vecv,\hatP)\leq r\bigr\}\bigr)
\leq\mu\bigl(\overline{\scrE(\sigma',r)}\bigr)=\mu\bigl(\scrE(\sigma',r)\bigr).
\end{align}

Note that $\scrE(\sigma'',r)\subset\scrE(\sigma,r)\subset\scrE(\sigma',r)$, since
$\fC(\kappa_\scrP^{-1}\sigma'')\supset\fC(\kappa_\scrP^{-1}\sigma)\supset\fC(\kappa_\scrP^{-1}\sigma')$.
Also, if $x$ lies in $\scrE(\sigma,r)$ but not in $\scrE(\sigma'',r)$,
then $\hatP^x$ must have some point in $\fC(\kappa_\scrP^{-1}\sigma'')\setminus\fC(\kappa_\scrP^{-1}\sigma)$,
and so by Theorem \ref{VEECHTHM},
\begin{align}\label{MAINTHMPF14}
\mu\bigl(\scrE(\sigma,r)\bigr) - \mu\bigl(\scrE(\sigma'',r)\bigr)
\leq\kappa_\scrP C_\scrP \vol\bigl(\fC(\kappa_\scrP^{-1}\sigma'')\setminus\fC(\kappa_\scrP^{-1}\sigma)\bigr).
\end{align}
Similarly
\begin{align}\label{MAINTHMPF15}
\mu\bigl(\scrE(\sigma',r)\bigr) - \mu\bigl(\scrE(\sigma,r)\bigr)
\leq\kappa_\scrP C_\scrP \vol\bigl(\fC(\kappa_\scrP^{-1}\sigma)\setminus\fC(\kappa_\scrP^{-1}\sigma')\bigr).
\end{align}
Now by taking $\sigma',\sigma''$ sufficiently near $\sigma$,
the right hand sides of \eqref{MAINTHMPF14} and \eqref{MAINTHMPF15} can be made as small as we like.
Hence from \eqref{MAINTHMPF12} and \eqref{MAINTHMPF13} we obtain in fact
\begin{align}\label{MAINTHMPF16}
\lim_{T\to\infty}\lambda\bigl(\bigl\{\vecv\in\S_1^{d-1}\col\scrN_{T}(\sigma,\vecv,\hatP)\leq r\bigr\}\bigr)
=\mu\bigl(\scrE(\sigma,r)\bigr)
=\mu\bigl(\bigl\{x\in X\col \#\bigl(\hatP^x 
\cap\fC(\kappa_\scrP^{-1}\sigma)\bigr)\leq r\bigr\}\bigr).
\end{align}
Note here that the right hand side is the same as $\sum_{r'=0}^rE(r,\sigma,\widehat \scrP)$; cf.\ \eqref{eq02}.
Hence since \eqref{MAINTHMPF16} has been proved for arbitrary $r\geq0$,
also \eqref{eq:main22} holds for arbitrary $r\geq0$, and we are done.

\section{Proof of Corollary \ref{GAPDISTRCOR}}\label{sec:eleven}

It follows from Theorem \ref{thm:main2} and a general statistical argument (cf. e.g. \cite{Marklof07})
that if we define $F(0)=0$ and 
\begin{align}\label{GAPDISTRCORPF1}
F(s)=-\frac d{ds} E(0,s,\hatP),
\end{align}
then the limit relation \eqref{GAPDISTRCORRES1} holds at each point $s\geq0$ where $F(s)$ is defined.
In fact the function $s\mapsto E(0,s,\hatP)$ is convex;
hence $F(s)$ exists for all $s>0$ except at most a countable number of points,
and is continuous at each point where it exists.
Also $F(s)$ is decreasing, and satisfies $\lim_{s\to0^+}F(s)=1=F(0)$ (cf.\ \eqref{eq:main24}) and $\lim_{s\to\infty}F(s)=0$.
Note also that \eqref{GAPDISTRCORRES2} is an immediate consequence of \eqref{GAPDISTRCORRES1},
the definition of $\widehat\xi_{T,j}$
and the fact that $N(T)\sim\kappa_\scrP^{-1}\widehat N(T)$ as $T\to\infty$
(cf.\ Theorem \ref{thm:main1} and \eqref{rel-dens}).

It now only remains to prove that $F(s)$ is continuous,
or equivalently that the derivative in \eqref{GAPDISTRCORPF1} exists for every $s>0$.
Assume the contrary, and let $s_0>0$ be a point where the derivative does \textit{not} exist.
By convexity, both the left and right derivative exist at $s_0$; thus
\begin{align}\label{GAPDISTRCORPF2}
-\lim_{s\to s_0^-}\frac{E(0,s_0,\hatP)-E(0,s,\hatP)}{s_0-s}
>
-\lim_{s\to s_0^+}\frac{E(0,s,\hatP)-E(0,s_0,\hatP)}{s-s_0}
\geq0.
\end{align}

In dimension $d=2$,
using the fact that the point process $x\mapsto\hatP^x$ is invariant under $\smatr 1r01\in\SL(2,\R)$ for any $r\in\R$,
it follows that the formula \eqref{eq01} holds with $\fC(\sigma)$ replaced by $\fC(a,a+\sigma)$ for any $a\in\R$,
where
\begin{align*}
\fC(a_1,a_2)=\Bigl\{\vecy=(y_1,y_2)\in\R^2\col 0<y_1<1,\:\frac2{\kappa_\scrP C_\scrP}a_1 y_1<y_2
<\frac2{\kappa_\scrP C_\scrP}a_2 y_1\Bigr\}
\end{align*}
In particular, for any $0<s<s'$ and $a\in\R$,
\begin{align}\label{GAPDISTRCORPF3}
E(0,s,\hatP)-E(0,s',\hatP)=\mu\bigl(\bigl\{x\in X\col \hatP^x\cap\fC(a,a+s)=\emptyset,\:
\hatP^x\cap\fC(a,a+s')\neq\emptyset\bigr\}\bigr).
\end{align}

For given $x\in X$, we order the numbers 
\begin{align*}
\frac{\kappa_\scrP C_\scrP}2\cdot\frac{y_2}{y_1}
\qquad\text{for }\:\vecy=(y_1,y_2)\in\hatP^x\cap ((0,1)\times\R_{>0})
\end{align*}
as $0<\lambda_{x,1}<\lambda_{x,2}<\ldots$.
We also set $\lambda_{x,0}=0$.
Taking $s'=s_0>s$ in \eqref{GAPDISTRCORPF3}, 
integrating over $a\in(0,a_0)$ for some fixed $a_0>0$, and using Fubini's Theorem,
we obtain
\begin{align*}
a_0\bigl(E(0,s,\hatP)-E(0,s_0,\hatP)\bigr)\leq
\int_X (s_0-s)\#\bigl\{j\geq0\col \lambda_{x,j+1}-\lambda_{x,j}>s,\: \lambda_{x,j+1}<a_0+s_0\bigr\}\,d\mu(x).
\end{align*}
Hence 
\begin{align}\label{GAPDISTRCORPF4}
-a_0\lim_{s\to s_0^-}\frac{E(0,s_0,\hatP)-E(0,s,\hatP)}{s_0-s}
\leq\int_X \#\bigl\{j\geq0\col \lambda_{x,j+1}-\lambda_{x,j}\geq s_0,\: \lambda_{x,j+1}<a_0+s_0\bigr\}\,d\mu(x).
\end{align}
Similarly, replacing $s$ by $s_0$ and $s'$ by $s$ in \eqref{GAPDISTRCORPF3}, we obtain
\begin{align}\label{GAPDISTRCORPF5}
-a_0\lim_{s\to s_0^+}\frac{E(0,s,\hatP)-E(0,s_0,\hatP)}{s-s_0}
\geq\int_X \#\bigl\{j\geq0\col \lambda_{x,j+1}-\lambda_{x,j}>s_0,\: \lambda_{x,j+1}<a_0+s_0\bigr\}\,d\mu(x).
\end{align}

It follows from \eqref{GAPDISTRCORPF2}, \eqref{GAPDISTRCORPF4} and \eqref{GAPDISTRCORPF5} that 
there is a set $A\subset X$ with $\mu(A)>0$ such that for every $x\in A$,
there is some $j\geq0$ such that
$\lambda_{x,j+1}-\lambda_{x,j}=s_0$ and $\lambda_{x,j}<a_0$.
Note that $\lambda_{x,1}\neq s_0$ for $\mu$-almost all $x\in X$,
by Theorem \ref{VEECHTHM} applied with $f$ as the characteristic function of the line
$y_2=s_0\frac2{\kappa_\scrP C_\scrP} y_1$ in $\R^2$.
Hence after removing a null set from $A$, we have for each $x\in A$
that $\hatP^x$ contains a pair of points $\vecy=(y_1,y_2)$ and $\vecy'=(y_1',y_2')$ satisfying 
\begin{align*}
0<y_1,y_1'<1,\qquad
\frac{y_2'}{y_1'}-\frac{y_2}{y_1}=\frac2{\kappa_\scrP C_\scrP}s_0,\qquad
0<\frac{y_2}{y_1}<\frac2{\kappa_\scrP C_\scrP}a_0.
\end{align*}
However this is easily seen to violate the $\SL(2,\R)$-invariance of the point process $x\mapsto\hatP^x$.
For example, for each $\frac12\leq\lambda\leq1$, because of the invariance under 
$\smatr{\sqrt\lambda}00{1/\sqrt\lambda}$,
there is a subset $A_\lambda\subset X$ with $\mu(A_\lambda)=\mu(A)>0$ such that for each $x\in A_\lambda$,
$\hatP^x$ contains a pair of points $\vecy=(y_1,y_2)$ and $\vecy'=(y_1',y_2')$ satisfying 
\begin{align*}
0<y_1,y_1'<\sqrt\lambda,\qquad
\frac{y_2'}{y_1'}-\frac{y_2}{y_1}=\frac2{\kappa_\scrP C_\scrP}\frac{s_0}{\lambda},\qquad
0<\frac{y_2}{y_1}<\frac2{\kappa_\scrP C_\scrP}\frac{a_0}{\lambda}.
\end{align*}
Let $R$ be the rectangle $(0,1)\times(0,\frac4{\kappa_\scrP C_\scrP}(a_0+s_0))$ in $\R^2$.
By taking $N$ sufficiently large we can ensure that the set $X_{R,N}:=\{x\in X\col \#(\hatP^x\cap R)\leq N\}$
has measure $\mu(X_{R,N})\geq1-\frac12\mu(A)$.
It follows that $\mu(A_\lambda\cap X_{R,N})\geq\frac12\mu(A)$ for each $\frac12\leq\lambda\leq1$,
and so 
if $\Lambda$ is any infinite subset of $[\frac12,1]$
then the integral $\int_{X_{R,N}}\sum_{\lambda\in\Lambda}I(x\in A_\lambda)\,d\mu(x)$ is infinite.
On the other hand the definition of $X_{R,N}$ implies that 
$\sum_{\lambda\in\Lambda}I(x\in A_\lambda)\leq\binom N2$ for each $x\in X_{R,N}$.

We have thus reached a contradiction, and we conclude that \eqref{GAPDISTRCORPF2} cannot hold,
i.e.\ $F(s)$ is continuous for all $s\geq0$.

\section{Vanishing near zero of the gap distribution}\label{sec:twelwe}

The gap distribution obtained in Corollary \ref{GAPDISTRCOR} may sometimes vanish near zero.
This phenomenon was noted numerically in \cite{BGHJ} in several examples.
In the case when $\scrP$ is a \textit{lattice}, this vanishing is well understood;
cf.\ \cite{Boca00}, \cite{partI}. We recall that in this case the limit distribution, and hence the gap size, is independent of the choice of lattice. 

Let $\scrP=\scrP(\scrW,\scrL)$ be a regular cut-and-project set.
We define $m_{\hatP}\geq0$ to be the supremum of all $\sigma\geq0$ with the property that 
$\#(\widehat\scrP^x\cap\fC(\kappa_\scrP^{-1}\sigma))\leq 1$ for ($\mu$-)almost all $x\in X$.
Then the computation in \eqref{LEM1pf3} 
(together with \eqref{E0BASICREL}) shows that
\begin{align}\label{MHATPPROPERTY1}
E(0,\sigma,\hatP)\quad\begin{cases}=1-\sigma &\text{when }\: 0\leq\sigma\leq m_{\hatP}
\\
>1-\sigma &\text{when }\: \sigma>m_{\hatP}.
\end{cases}
\end{align}
We note that if $d\geq3$ then $m_{\hatP}=0$,
because of the $\SL(d,\R)$-invariance and the fact that 
$\SL(d,\R)$ acts transitively on pairs of non-proportional vectors in $\R^d\setminus\{\bn\}$ when $d\geq3$.

Let us now assume $d=2$. Note that by \eqref{MHATPPROPERTY1} and the discussion at the beginning of Sec.\ \ref{sec:eleven},
the function $F$ in Corollary \ref{GAPDISTRCOR} satisfies
\begin{align*}
F(s)\quad\begin{cases} =1&\text{if }\: 0\leq s\leq m_{\hatP}
\\
<1&\text{if }\: s>m_{\hatP}.
\end{cases}
\end{align*}
In other words, $m_{\hatP}$ is the largest number with the property that
the limiting gap distribution obtained in Corollary \ref{GAPDISTRCOR} 
is supported on the interval $[m_{\hatP},\infty)$.
In particular, the support of the limiting gap distribution is separated from $0$ if and only if $m_{\hatP}>0$.

Let us also note that if $d=2$, $m\geq1$, and $\scrL$ is a ``generic'' lattice or affine lattice,
so that either $H_g=\SL(n,\R)$ or $H_g=G=\ASL(n,\R)$,
then we have $m_{\hatP}=0$, again using the transitivity of the action of $\SL(n,\R)$ on pairs of non-proportional vectors in
$\R^n\setminus\{\bn\}$ for $n\geq3$.

\vspace{5pt}

On the other hand, we will now recall (for general $d$) 
a standard construction of cut-and-project sets using the geometric representation of algebraic numbers \cite{Borevich},
which can be used to produce several of the most well-known quasicrystals, cf.~\cite{Baake13,meyer,meyer95,Moody97,Moody00,Pleasants03}, 
We will see that in special cases with $d=2$, this construction leads to quasicrystals for which $m_\hatP>0$.

We follow \cite[Sec.\ 2.2]{qc}.
Let $K$ be a totally real number field of degree $N\geq2$ over $\Q$,
let $\scrO_K$ be its subring of algebraic integers,
and let $\pi_1,\ldots,\pi_N$  be the distinct embeddings of $K$ into $\R$.
We will always view $K$ as a subset of $\R$ via $\pi_1$; in other words we agree that $\pi_1$ is the identity map.
Fix $d\geq1$ and set $n=dN$.
By abuse of notation we write $\pi_j$ also for the coordinate-wise embedding of $K^d$ into $\R^d$,
and for the entry-wise embedding of $M_d(K)$ (the algebra of $d\times d$ matrices with entries in $K$) into $M_d(\R)$.
Let $\scrL$  %
be the lattice in $\R^n=(\R^d)^N$ given by
\begin{align}\label{NUMFIELDLATTICE}
\scrL=\scrL_K^d:   %
=\Bigl\{(\vecx,\pi_2(\vecx),\ldots,\pi_N(\vecx))\col \vecx\in\scrO_K^d\Bigr\}.
\end{align}
As usual we set $m=n-d=(N-1)d$, let $\pi$ and $\pi_\intl$ be the projections of
$\R^n=(\R^d)^N=\R^d\times\R^m$ onto the first $d$ and last $m$ coordinates.
It follows from \cite[Cor.\ 2 in Ch.\ IV-2]{Weil} that $\pi_\intl(\scrL)$ is dense in $\R^m$,
i.e.\ we have $\scrA=\R^m$ and $\scrV=\R^n$ in the present situation.
Hence the window $\scrW$ should be taken as a subset of $\R^m$, and we consider
the cut-and-project set $\scrP(\scrW,\scrL)\subset\R^d$.

Choose $\delta>0$ and $g\in\SL(n,\R)$ such that
\begin{align}\label{SCRLDEF}
\scrL=\delta^{1/n}\Z^ng.
\end{align}
In fact
\begin{align}\label{DETOFLK}
\delta=|D_K|^{d/2},
\end{align}
where $D_K$ is the discriminant of $K$; %
cf., e.g., \cite[Ch.\ V.2, Lemma 2]{lang}.
As proved in \cite[Sec.\ 2.2.1]{qc}, in this situation we have
\begin{align}\label{HGNUMBERFIELDCASE}
H_g=g\SL(d,\R)^Ng^{-1};
\end{align}
where $\SL(d,\R)^N$ is embedded as a subgroup of $G=\ASL(n,\R)$ through
\begin{align}\label{SLdRNIMB}
(A_1,\ldots,A_N)\mapsto\Bigl(\diag[A_1,\ldots,A_N],\bn\Bigr), 
\end{align}
where $\diag[A_1,\ldots,A_N]$ is the block matrix whose diagonal blocks are $A_1,\ldots,A_N$ in this order,
and all other blocks vanish.

\begin{lem}\label{MPFROMBELOWLEM}
Let $\scrP=\scrP(\scrW,\scrL)$ be a regular cut-and-project set with $\scrL$ as in \eqref{NUMFIELDLATTICE},
and with $d=N=2$ (thus $K$ is a real quadratic number field).
Let $\ve>1$ be the fundamental unit of $\scrO_K$, and set $R=\sup\{\|\vecw\|\col\vecw\in\scrW\}$.
Then
\begin{align}\label{MPFROMBELOWLEMRES}
m_\hatP\geq\frac{\kappa_\scrP C_\scrP \delta}{(\ve^2+\ve^{-2})^2R^2}.
\end{align}
\end{lem}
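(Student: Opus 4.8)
The plan is to convert the event ``$\#(\hatP^x\cap\fC(\kappa_\scrP^{-1}\sigma))\ge2$'' into a norm inequality in $\scrO_K$ and then invoke $|N_{K/\Q}(\alpha)|\ge1$ for $\alpha\in\scrO_K\setminus\{\vecnull\}$. First I would make $\scrP^x$ explicit in the present situation. By \eqref{HGNUMBERFIELDCASE}--\eqref{SLdRNIMB}, every $x\in X$ may be written $x=\Gamma h$ with $h=g\,\diag[A_1,A_2]\,g^{-1}$ for some $(A_1,A_2)\in\SL(2,\R)^2$; then, using $\scrL=\delta^{1/n}\Z^ng=\scrL_K^2$, one computes $\delta^{1/n}(\Z^nhg)=\scrL\,\diag[A_1,A_2]$ and hence
\[
\scrP^x=\bigl\{\vecx A_1\ \col\ \vecx=(x_1,x_2)\in\scrO_K^2,\ \pi_2(\vecx)A_2\in\scrW\bigr\},
\]
where $\pi_2(\vecx)=(\pi_2(x_1),\pi_2(x_2))$; moreover $\vecp=\vecx A_1$ is visible in $\scrP^x$ exactly when no $t\in(0,1)$ with $t\vecx\in\scrO_K^2$ satisfies $\pi_2(t)\,\pi_2(\vecx)A_2\in\scrW$.

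Next I would take two distinct points $\vecp=\vecx A_1$, $\vecq=\vecz A_1$ of $\hatP^x$ lying in $\fC(\kappa_\scrP^{-1}\sigma)$ and set $\alpha:=x_1z_2-x_2z_1\in\scrO_K$. The first step is that $\alpha\ne0$: if $\alpha=0$ then $\vecx,\vecz$ are $K$-proportional (neither being $\vecnull$, since $\vecnull\notin\fC$), so $\vecq=\lambda\vecp$ for some $\lambda\in K^\times$; as $\fC(\kappa_\scrP^{-1}\sigma)$ lies in $\{x_1>0\}$ we get $\lambda>0$, and $\lambda\ne1$ together with $\vecp,\vecq\in\scrP^x$ contradicts visibility of whichever of $\vecp,\vecq$ is longer. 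Hence $\alpha\in\scrO_K\setminus\{\vecnull\}$, so $|\alpha|\cdot|\pi_2(\alpha)|=|N_{K/\Q}(\alpha)|\ge1$, and I estimate the two factors on the two sides of the construction. On the physical side, $\det A_1=1$ gives $|\alpha|=|\det(\vecp,\vecq)|$, and since for $d=2$ the set $\fC(\kappa_\scrP^{-1}\sigma)$ is the convex triangle $\{0<x_1<1,\ |x_2|<\tfrac{\sigma}{\kappa_\scrP C_\scrP}x_1\}$ of area $\tfrac{\sigma}{\kappa_\scrP C_\scrP}$, any two of its points span with the origin a sub-triangle of area below $\tfrac{\sigma}{\kappa_\scrP C_\scrP}$, i.e. $|\det(\vecp,\vecq)|<\tfrac{2\sigma}{\kappa_\scrP C_\scrP}$. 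On the internal side, $\det A_2=1$ gives $|\pi_2(\alpha)|=|\det(\pi_2(\vecx)A_2,\,\pi_2(\vecz)A_2)|$, the area of a parallelogram spanned by two points of $\scrW$.

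Combining, $1<\tfrac{2\sigma}{\kappa_\scrP C_\scrP}\cdot|\pi_2(\alpha)|$, so the lemma follows once one has $|\pi_2(\alpha)|\le\tfrac{(\ve^2+\ve^{-2})^2R^2}{2\delta}$ (this forces $\sigma>\tfrac{\kappa_\scrP C_\scrP\delta}{(\ve^2+\ve^{-2})^2R^2}$, so no $\sigma$ below the asserted value admits two visible points in $\fC(\kappa_\scrP^{-1}\sigma)$ for any $x$). The crude bound $|\pi_2(\alpha)|\le R^2$ (both internal images lie in $\scrW\subset\scrB_R$) already gives $m_\hatP\ge\tfrac{\kappa_\scrP C_\scrP}{2R^2}$, which suffices whenever $(\ve^2+\ve^{-2})^2\ge2\delta$ but falls a little short in the borderline cases (for instance $K=\Q(\sqrt5)$, where $\delta=5$, $\ve=\tfrac{1+\sqrt5}2$ and $(\ve^2+\ve^{-2})^2=9$). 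To recover the stated constant I would exploit that the lattice representatives $\vecx,\vecz$ of $\vecp,\vecq$ are only determined up to multiplication by a unit $u\in\scrO_K^\times$ for which $\pi_2(u)\,\pi_2(\vecx)A_2$ still lies in $\scrW$ (recall $\pi_2(\ve)=\bar\ve$ with $|\bar\ve|=\ve^{-1}$): replacing each of $\vecx,\vecz$ by the power of $\ve$ making its internal image as short as possible while remaining in $\scrW$ confines $\pi_2(\vecx)A_2$ and $\pi_2(\vecz)A_2$ to a region whose size is controlled by $R$ and $\ve$, and keeping track of the normalisation $\covol(\scrL)=\delta$ relating $\scrO_K^2$ to $\Z^ng$ then upgrades $R^2$ to $\tfrac{(\ve^2+\ve^{-2})^2R^2}{2\delta}$.

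The whole argument is pointwise in $x$ and needs no genericity; the $\alpha\ne0$ step and the norm estimate are routine, and merely obtaining $m_\hatP>0$ is immediate from the crude bound. The delicate point I expect to cost the most work is exactly this last refinement: performing the simultaneous unit-normalisation of the two representatives and carrying the bookkeeping of $\delta$ and $\ve$ through the internal-area estimate in order to land on the precise constant $(\ve^2+\ve^{-2})^{-2}\delta$ rather than merely $\tfrac12$.
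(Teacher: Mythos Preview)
Your approach via the norm inequality $|N_{K/\Q}(\alpha)|\ge1$ for $\alpha=x_1z_2-x_2z_1\in\scrO_K\setminus\{0\}$ is genuinely different from the paper's and more elementary. The paper never forms this determinant; instead it takes the two lattice vectors $(\vecx,\overline\vecx)$, $(\vecx',\overline{\vecx'})$ together with their $\ve$-multiples, observes that these four are $\R$-linearly independent in $\R^4$ and all lie in a ball of radius $r'$, and concludes $\delta<{r'}^4$. An optimal integer choice of the unit power $\ve^k$ (balancing the physical and internal scales) then gives $r'\le\sqrt{\ve^2+\ve^{-2}}\sqrt{Rr}$ with $r=\sqrt{\sigma/(\kappa_\scrP C_\scrP)}$, from which the stated constant drops out.

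Your crude bound $m_{\hatP}\ge\tfrac{\kappa_\scrP C_\scrP}{2R^2}$ is correct (and your visibility argument for $\alpha\ne0$ is fine); as you note, it already implies the lemma whenever $(\ve^2+\ve^{-2})^2\ge2\delta=2|D_K|$, which holds for every real quadratic field except $K=\Q(\sqrt5)$. But your proposed refinement for that remaining case rests on a false premise: you write that ``the lattice representatives $\vecx,\vecz$ of $\vecp,\vecq$ are only determined up to multiplication by a unit'', yet $\pi|_{\scrL}$ is injective here (it is $(\vecx,\overline\vecx)\mapsto\vecx$), so the representative $\vecx=\vecp A_1^{-1}$ is \emph{unique}. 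Replacing $\vecx$ by $\ve^k\vecx$ produces the different physical point $\ve^k\vecp$, which need not remain in $\fC(\kappa_\scrP^{-1}\sigma)$, so you lose control of $|\alpha|$ precisely when you try to shrink $|\pi_2(\alpha)|$. There is no way to trade these off within your two-vector setup; the paper's four-vector covolume argument is what brings in $\delta$ naturally and makes the unit optimisation work. Since $\Q(\sqrt5)$ is exactly the field underlying the Penrose tiling, this is not a case one can set aside.
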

\begin{proof}
Let $\sigma>0$ and $x\in X$ be given and assume that
$\#(\widehat\scrP^x\cap\fC(\kappa_\scrP^{-1}\sigma))\geq2$.
It suffices to prove that we must then have
$\sigma\geq\frac{\kappa_\scrP C_\scrP \delta}{(\ve^2+\ve^{-2})^2R^2}.$
The area of $\fC(\kappa_\scrP^{-1}\sigma)$ equals $r^2$ where $r:=\sqrt{\frac{\sigma}{\kappa_\scrP C_\scrP}}$;
hence there is some $A\in\SL(2,\R)$ 
which maps $\fC(\kappa_\scrP^{-1}\sigma)$ to the open triangle
$\fC_r:=\{\vecx\in\R^2\col 0<x_1<r, \: |x_2|<x_1\}$.
Take $(A_1,A_2)\in\SL(2,\R)^2$ (embedded in $G$ as in \eqref{SLdRNIMB}) so that $x=\Gamma g(A_1,A_2)g^{-1}$.
Set $\widetilde A=(A_1A,A_2)$; then 
$\scrP^xA=\scrP(\scrW,\scrL\widetilde A)$. 
We set $\gamma=\diag[\ve^{-k},\ve^{-k},\ve^{k},\ve^{k}]\in\SL(4,\R)$, where $k$ is an integer which we will choose below.
Then $\scrL\widetilde A=\scrL\gamma\widetilde A=\scrL\widetilde A\gamma$,
by \eqref{NUMFIELDLATTICE} and since $\widetilde A$ is block diagonal. Hence
\begin{align*}
\scrP^xA=\scrP(\scrW,\scrL\widetilde A)=\scrP(\scrW,\scrL\widetilde A\gamma)
=\ve^{-k}\scrP(\ve^{-k}\scrW,\scrL\widetilde A).
\end{align*}
Now $\#(\widehat\scrP^xA\cap \fC_r)\geq2$ and thus
$\scrL\widetilde A$ contains two points in $(\ve^{k}\fC_r)\times(\ve^{-k}\scrW)$ which have non-proportional images 
under $\pi$ (the projection onto the physical space $\R^2$). 
In other words, there exist $\vecx,\vecx'\in\scrO_K^2\subset\R^2$ 
which are linearly independent over $\R$ (thus also over $K$) such that
$\vecb_1=(\vecx,\overline{\vecx})\widetilde A$ and $\vecb_2=(\vecx',\overline{\vecx'})\widetilde A$
lie in $(\ve^{k}\fC_r)\times(\ve^{-k}\scrW)$.
Here we write $\vecx\mapsto\overline\vecx$ for the nontrivial automorphism of $K$.
It follows that also 
$\vecb_3=(\ve\vecx,\overline{\ve\vecx})\widetilde A$ and $\vecb_4=(\ve\vecx',\overline{\ve\vecx'})\widetilde A$
lie in $(\ve^{k+1}\fC_r)\times(\ve^{-k-1}\scrW)$.
However the four vectors $(\vecx,\overline{\vecx})$, $(\vecx',\overline{\vecx'})$,
$(\ve\vecx,\overline{\ve\vecx})$, $(\ve\vecx',\overline{\ve\vecx'})$ lie in $\scrL$ and form a $K$-linear basis of $K^4$.
Hence $\vecb_1,\vecb_2,\vecb_3,\vecb_4$ lie in $\scrL\widetilde A$ and are linearly independent over $\R$.
However $\|\vecb_j\|<r'$ for $j=1,2,3,4$, where
\begin{align*}
r'=\max\left(\sqrt{(\ve^k r)^2+(\ve^{-k}R)^2},\sqrt{(\ve^{k+1}r)^2+(\ve^{-k-1}R)^2}\right),
\end{align*}
and thus $\delta$, the covolume of $\scrL\widetilde A$, must be less than ${r'}^4$.
Now choose $k$ so as to minimize $r'$.
Then $r'\leq\sqrt{\ve^2+\ve^{-2}}\sqrt{Rr}$,
and combining this with $\delta<{r'}^4$ and 
$r=\sqrt{\frac{\sigma}{\kappa_\scrP C_\scrP}}$ we obtain
$\sigma>\frac{\kappa_\scrP C_\scrP \delta}{(\ve^2+\ve^{-2})^2R^2}$,
as desired.
\end{proof}

Let us make some further observations in this vein.
First, note the general relation
\begin{align*}
\scrP(\scrW,q^{-1}\scrL)=q^{-1}\scrP(q\scrW,\scrL),\qquad\text{$\forall$ $q>0$ (real).}
\end{align*}
Using this relation with $q$ an appropriate positive integer, it is clear that if $\scrL$ is any lattice in
$\R^n$ such that the cut-and-project set $\scrP=\scrP(\scrW,\scrL)$ satisfies $m_{\hatP}>0$
for every admissible window set $\scrW$
(for example this holds when $\scrL$ is as in Lemma \ref{MPFROMBELOWLEM}),
then $m_{\hatP}>0$ \textit{also} holds for any cut-and-project set obtained from $\scrP(\scrW,\scrL)$ by the
``union of rational translates'' construction in \cite[Sec.\ 2.3.1]{qc}.
Furthermore, the property of having $m_{\hatP}>0$ is also, obviously,
preserved under ``passing to a sublattice'' as in \cite[Sec.\ 2.4]{qc}.
In particular, by \cite[Sec.\ 2.5]{qc} and Remark \ref{PENROSEREMARK} below,
we have $m_\hatP>0$ for any $\scrP$ associated with a rhombic Penrose tiling.

\begin{remark}\label{PENROSEREMARK}
If we wish to reproduce the vertex set of an \textit{arbitrary} rhombic Penrose tiling (RPT)
as a cut-and-project set within the present framwork, we also need to consider the case of so-called \textit{singular} vectors $\vecgamma$, as explained by de Bruijn \cite{bruijn} (we use the same notation as in \cite[Sec.\ 2.5]{qc}).
In this case there are either 2 or 10 distinct RPT's associated to $\vecgamma$,
and by \cite[Sec.\ 12]{bruijn} (carried over to our notation), 
the vertex set of any of these can be expressed as
\begin{align*}
\bigl\{\pi(\vecy)\col\vecy\in\scrL,\:\:[\exists M>0\col\forall m>M\col \pi_\intl(\vecy)\in\scrW(\vecgamma^{(m)})]\bigr\},
\end{align*}
where $\vecgamma^{(m)}$ is an appropriate sequence of regular vectors tending to $\vecgamma$ as $m\to\infty$,
and we write $\scrW=\scrW(\vecgamma)$ for the open window set defined in \cite[(2.25)]{qc}.
In other words, the vertex set of the RPT equals $\scrP(\widetilde\scrW,\scrL)$,
where $\widetilde\scrW:=\{\vecv\in\scrA\col[\exists M>0\col\forall m>M\col\vecv\in\scrW(\vecgamma^{(m)})]\}$.
Note that $\widetilde\scrW$ is the union of the open set $\scrW(\vecgamma)$ and part of its boundary.
In particular $\partial\widetilde\scrW=\partial\scrW(\vecgamma)$ has measure zero with respect to $\mu_\scrA$.
Hence the vertex set is again a regular cut-and-project set,
and the previous discussion leading to $m_\hatP>0$ applies.
\end{remark}

\begin{remark}
We do not expect the lower bound in Lemma \ref{MPFROMBELOWLEM} to be sharp,
and the argument which we gave regarding the construction in \cite[Sec.\ 2.3.1]{qc} certainly does not lead to
a sharp bound.
It would be interesting to try to determine the \textit{exact} value of $m_\hatP$ for the Penrose tiling,
and also for some of the cases discussed in \cite{BGHJ}.
\end{remark}

It is interesting to note that for a large class of regular cut-and-project sets with $m_\hatP>0$,
a corresponding lower bound on the gap length is present 
in the set of directions \eqref{WHXI} not only in the limit $T\to\infty$, but for \textit{any fixed} 
$T$:  
\begin{lem}\label{GAPMANIFESTLEM}
Let  $\scrP=\scrP(\scrW,\scrL)$ be a regular cut-and-project set in dimension $d=2$ such that
either $\bn\notin\scrP$ or $\bn\in\scrP^x$ for all $x\in X$, and furthermore
$\pi_\intl(\vecy)\notin\partial\scrW$ for all
$\vecy\in\scrL$ (viz., 
there are no ``singular vertices''; cf.\ \cite[p.\ 6]{BGHJ}).
Then for any non-proportional vectors $\vecp_1,\vecp_2\in\hatP$,
the triangle with vertices $\bn,\vecp_1,\vecp_2$
has area $\geq(\kappa_\scrP C_\scrP)^{-1} m_\hatP$.
In particular, for any $T>0$ and $1\leq j\leq\widehat N(T)$ we have
$\widehat\xi_{T,j}-\widehat\xi_{T,j-1}\geq\min(\frac12,(\pi\kappa_\scrP C_\scrP)^{-1}m_\hatP T^{-2})$.
\end{lem}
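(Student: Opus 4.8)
The plan is to reduce the ``fixed $T$'' statement to the almost-sure statement encoded in the definition of $m_{\hatP}$, by exhibiting, for any given non-proportional pair $\vecp_1,\vecp_2\in\hatP$, a single point $x\in X$ whose visible set $\hatP^x$ contains a corresponding pair, and then invoking \eqref{MHATPPROPERTY1} (or rather its geometric content). First I would recall that $\scrP=\scrP(\scrW,\Z^ng)$ and that $\scrP^xA=\scrP(\scrW,\Z^n\varphi_g(A)g)$, hence also $\hatP^xA=\hatP(\scrW,\Z^n\varphi_g(A)g)$, for any $A\in\SL(d,\R)$; in particular taking $A$ such that $x=\Gamma\varphi_g(A)$ gives $\scrP^x=\scrP A^{-1}$ and $\hatP^x=\hatP A^{-1}$. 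So the orbit $\{\hatP A\col A\in\SL(2,\R)\}$ consists entirely of visible sets of the form $\hatP^x$ — but we must be slightly careful: the hypothesis that either $\bn\notin\scrP$ or $\bn\in\scrP^x$ for all $x$ guarantees that ``visibility'' is compatible under the $\SL(2,\R)$-action, i.e.\ $\widehat{\scrP^x A}=\hatP^x A$, and the hypothesis $\pi_\intl(\vecy)\notin\partial\scrW$ for all $\vecy\in\scrL$ ensures $\scrP^xA$ is itself of the form $\scrP(\scrW,\cdot)$ with the same window, with no boundary ambiguities.

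Next, suppose for contradiction that $\vecp_1,\vecp_2\in\hatP$ are non-proportional and that the triangle $\bn,\vecp_1,\vecp_2$ has area $\alpha<(\kappa_\scrP C_\scrP)^{-1}m_\hatP$. Since $\SL(2,\R)$ acts transitively on pairs of non-proportional vectors in $\R^2\setminus\{\bn\}$ with prescribed signed area, and the triangle area equals $\tfrac12|\det(\vecp_1,\vecp_2)|$, I would choose $A\in\SL(2,\R)$ mapping $(\vecp_1,\vecp_2)$ to a pair lying inside $\fC(\kappa_\scrP^{-1}\sigma)$ for some $\sigma$ with $2\alpha=\vol(\fC(\kappa_\scrP^{-1}\sigma))=\sigma/(\kappa_\scrP C_\scrP)$, i.e.\ $\sigma=2\kappa_\scrP C_\scrP\alpha<2m_\hatP$ — here I need a slightly better bookkeeping: in fact one can arrange $\sigma<m_\hatP$ up to a harmless factor, since the two points can be placed anywhere in an open triangle of area $2\alpha$ and $\fC(\kappa_\scrP^{-1}\sigma)$ has area $\sigma/(\kappa_\scrP C_\scrP)$; more carefully, the area of the triangle spanned by the two points is strictly less than the area of $\fC(\kappa_\scrP^{-1}\sigma)$ for any $\sigma$ with $\sigma/(\kappa_\scrP C_\scrP)>2\alpha$, so it suffices that $m_\hatP>2\kappa_\scrP C_\scrP\alpha$, which I would absorb by restating the claimed bound with the appropriate constant, or — cleaner — argue directly with the definition of $m_\hatP$ as a supremum and a limiting $\sigma\nearrow m_\hatP$. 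For $x$ with $\hatP^x=\hatP A^{-1}$ we then have $\#(\hatP^x\cap\fC(\kappa_\scrP^{-1}\sigma))\geq2$; but by the definition of $m_\hatP$ this can only happen on a set of $x$ of positive measure when $\sigma> m_\hatP$. Since our $x$ is a single point, I instead argue: the set $\Omega=\{x'\in X\col \#(\hatP^{x'}\cap\fC(\kappa_\scrP^{-1}\sigma))\geq2\}$ is non-empty (it contains $x$), and — using Lemma \ref{KEYCONTLEM2} together with the fact that $\hatP^{x'}\cap\partial\fC(\kappa_\scrP^{-1}\sigma)=\emptyset$ for $\mu$-a.e.\ $x'$ (Theorem \ref{VEECHTHM}) — it has non-empty interior, hence positive $\mu$-measure, since $\mu$ has full support on $X$. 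This contradicts $\sigma\leq m_\hatP$, establishing the area bound.

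Finally I would deduce the statement about $\widehat\xi_{T,j}-\widehat\xi_{T,j-1}$. Consecutive directions $\widehat\xi_{T,j-1},\widehat\xi_{T,j}$ correspond to non-proportional $\vecp_{j-1},\vecp_j\in\hatP\cap\scrB_T^2$ (non-proportional since distinct and visible), with angular gap $2\pi(\widehat\xi_{T,j}-\widehat\xi_{T,j-1})$ (interpreting indices cyclically via $\widehat\xi_{T,0}=\widehat\xi_{T,\widehat N(T)}-1$). The triangle $\bn,\vecp_{j-1},\vecp_j$ has area $\tfrac12\|\vecp_{j-1}\|\,\|\vecp_j\|\sin\theta\leq\tfrac12 T^2\sin\theta\leq\tfrac12 T^2\theta$ where $\theta=2\pi(\widehat\xi_{T,j}-\widehat\xi_{T,j-1})$ is the gap angle (assuming $\theta\leq\pi$, which is automatic once $\widehat N(T)\geq2$ — and one handles $\widehat N(T)\leq1$ or the ``large gap'' case by the trivial bound $\widehat\xi_{T,j}-\widehat\xi_{T,j-1}\leq\tfrac12$ built into the $\min$). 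Combining with the area lower bound $(\kappa_\scrP C_\scrP)^{-1}m_\hatP$ gives $\pi T^2(\widehat\xi_{T,j}-\widehat\xi_{T,j-1})\geq(\kappa_\scrP C_\scrP)^{-1}m_\hatP$, i.e.\ $\widehat\xi_{T,j}-\widehat\xi_{T,j-1}\geq(\pi\kappa_\scrP C_\scrP)^{-1}m_\hatP T^{-2}$, as claimed. I expect the main technical obstacle to be the careful handling of the two standing hypotheses of the lemma — making sure that the $\SL(2,\R)$-action genuinely intertwines the passage to visible points and the cut-and-project construction (so that $\hatP A^{-1}$ really is some $\hatP^x$), and that the bookkeeping between ``triangle area'', ``$\vol(\fC(\kappa_\scrP^{-1}\sigma))$'', and the supremum defining $m_\hatP$ is tight enough that no constant is lost; the geometric and measure-theoretic steps are then routine.
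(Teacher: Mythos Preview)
Your overall strategy coincides with the paper's: assume the triangle area is too small, use $\SL(2,\R)$-transitivity to move the pair into $\fC(\kappa_\scrP^{-1}\sigma_0)$ for some $\sigma_0<m_{\hatP}$, exhibit $x=\Gamma\varphi_g(A)$ with $\#(\hatP^x\cap\fC(\kappa_\scrP^{-1}\sigma_0))\geq2$, upgrade this to positive $\mu$-measure, and contradict the definition of $m_{\hatP}$. The deduction of the gap bound from the area bound is also the same.

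There is, however, a genuine gap in the upgrading step. Lemma~\ref{KEYCONTLEM2} is an almost-everywhere statement: it gives local constancy of $x'\mapsto\#(\hatP^{x'}\cap U)$ only at $\mu$-a.e.\ $x'$. Your specific $x=\Gamma\varphi_g(A)$ lies on the orbit $\Gamma\varphi_g(\SL(2,\R))$, which is $\mu$-null whenever $H_g\supsetneq\varphi_g(\SL(2,\R))$, so you cannot invoke the lemma at $x$. (And if $\mu(\Omega)=0$, the a.e.\ continuity only forces $X\setminus\Omega$ to contain a dense open set, which does not contradict $\Omega\neq\emptyset$.) The paper instead establishes openness \emph{at this specific $x$} directly, and this is where the hypotheses enter: since $\pi_\intl(\vecy_j)\notin\partial\scrW$, the lifts $\vecy_j\smatr A00{1_m}$ lie in the \emph{open} set $\fC(\kappa_\scrP^{-1}\sigma_0)\times\scrW^\circ$, and this persists under small perturbation of $x$; the resulting two points of $\scrP^{x'}\cap\fC$ remain non-proportional and non-zero, and since $\fC$ is a cone with apex $\bn$, each has a visible representative still in $\fC$. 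Thus $\#(\hatP^{x'}\cap\fC)\geq2$ on a genuine open neighbourhood. The paper's ``cf.\ the proof of Lemma~\ref{KEYCONTLEM2}'' means exactly this: reuse the \emph{method} at this particular $x$, not the a.e.\ conclusion.

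Two minor points. Your factor-of-$2$ worry is unfounded: $\fC(\kappa_\scrP^{-1}\sigma_0)$ is an open triangle with vertex $\bn$ and area $\sigma_0/(\kappa_\scrP C_\scrP)$, and contains pairs spanning (with $\bn$) a triangle of any prescribed signed area of absolute value less than $\sigma_0/(\kappa_\scrP C_\scrP)$; so $\alpha<(\kappa_\scrP C_\scrP)^{-1}m_{\hatP}$ allows $\sigma_0\in(\kappa_\scrP C_\scrP\alpha,\,m_{\hatP})$ with no loss of constant. Also, ``non-proportional since distinct and visible'' is false (antipodal visible points are proportional), but then the angular gap equals $\tfrac12$, which is exactly why the $\min$ is there; and $\widehat{\scrP' A}=\hatP' A$ is trivially true for any linear $A$, so that is not the role of the $\bn$-hypothesis.
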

(Using the last bound of Lemma \ref{GAPMANIFESTLEM} together with 
$\widehat N(T)\sim \pi\kappa_\scrP C_\scrP T^2$ as $T\to\infty$
in the limit relation \eqref{GAPDISTRCORRES1} in Corollary \ref{GAPDISTRCOR},
we immediately recover the fact that $F(s)=1$ for $0\leq s\leq m_\hatP$.
We also remark that the condition $\bn\in\scrP^x$ for all $x\in X$ is fulfilled whenever $\bn\in\scrW$ and $\scrL$ is a lattice,
since then $H_g\subset\SL(n,\R)$.)
\begin{proof}
Assume that $\vecp_1,\vecp_2\in\hatP$ are non-proportial vectors
and that the triangle $\triangle(\bn,\vecp_1,\vecp_2)$
has area less than $(\kappa_\scrP C_\scrP)^{-1} m_\hatP$.
Note that for any $\vecp_1',\vecp_2'\in\R^2$ such that $\triangle(\bn,\vecp_1',\vecp_2')$ has the same area
and orientation as $\triangle(\bn,\vecp_1,\vecp_2)$, there exists $A\in\SL(2,\R)$ with
$\vecp_1'=\vecp_1A$ and $\vecp_2'=\vecp_2A$.
In particular there are some $A\in\SL(2,\R)$ and $\sigma_0\in(0,m_\hatP)$ such that
$\vecp_1A,\vecp_2A\in\fC(\kappa_\scrP^{-1}\sigma_0)$.
Now there are $\vecy_1,\vecy_2\in\scrL$ such that $\pi(\vecy_j)=\vecp_j$ and $\pi_\intl(\vecy_j)\in\scrW$
for $j=1,2$, and by assumption neither $\pi_\intl(\vecy_1)$ nor $\pi_\intl(\vecy_2)$ lie in $\partial\scrW$;
hence 
$\vecy_j\smatr{A}00{1_m}\in\fC(\kappa_\scrP^{-1}\sigma_0)\times\scrW^\circ$ for $j=1,2$.
It follows that $\#(\hatP^x\cap\fC(\kappa_\scrP^{-1}\sigma_0))\geq2$ for
$x=\Gamma\varphi_g(A)\in X$.
In fact, using our assumptions on $\scrP$ and 
the fact that $\fC(\kappa_\scrP^{-1}\sigma_0)\times\scrW^\circ$ is open,
we have $\#(\hatP^{x'}\cap\fC(\kappa_\scrP^{-1}\sigma_0))\geq2$ for all $x'$ in some open neighbourhood of 
$x=\Gamma\varphi_g(A)$
(cf.\ the proof of Lemma \ref{KEYCONTLEM2}).
However this violates our definition of $m_\hatP$.
We have thus proved the first part of the lemma.

To prove the second statement we merely have to note that 
$\widehat\xi_{T,j}-\widehat\xi_{T,j-1}=(2\pi)^{-1}\varphi(\vecp_1,\vecp_2)$ 
for some $\vecp_1\neq\vecp_2\in\hatP_T$.
If $\vecp_1,\vecp_2$ are not proportional then since $\triangle(\bn,\vecp_1,\vecp_2)$ has area
$\frac12\|\vecp_1\|\|\vecp_2\|\sin\varphi(\vecp_1,\vecp_2)<\frac12T^2\sin\varphi(\vecp_1,\vecp_2)$,
the first part of the lemma implies
$\varphi(\vecp_1,\vecp_2)>\sin\varphi(\vecp_1,\vecp_2)>2(\kappa_\scrP C_\scrP)^{-1}m_\hatP T^{-2}$;
on the other hand if $\vecp_1,\vecp_2$ are proportional then 
necessarily $\varphi(\vecp_1,\vecp_2)=\pi$.
\end{proof}

\begin{appendix}
\setcounter{section}{1}
\setcounter{equation}{0}
\section*{Appendix: Non-spherical truncations}\label{APPENDIX}

We have defined $\scrP_T$ as the intersection of $\scrP\setminus\{\bn\}$ and the ball $\scrB_T^d$.
However, as we will now explain, the fact that $\lambda$ is arbitrary in Theorem \ref{thm:main2} allows us to
obtain corresponding results with $\scrB_T^d$ replaced by a more general expanding domain.

Throughout this section, let $\scrP$ be an arbitrary point set with constant density $\theta(\scrP)$,
and let $\scrE$ be a starshaped region in $\R^d$ of the form
\begin{align*}
\scrE=\{r\vecv\col \vecv\in\S_1^{d-1},\:0\leq r<\ell(\vecv)\},
\end{align*}
where $\ell$ is a continuous function from $\S_1^{d-1}$ to $\R_{>0}$.
Set
\begin{equation}\label{NTEDEF}
\scrN_T(\sigma,\vecv,\scrP,\scrE):=\#\{\vecy\in\scrP\cap T\scrE\setminus\{\vecnull\} \col \|\vecy\|^{-1}\vecy\in \fD_T(\sigma,\vecv)\} .
\end{equation}
In particular, $\scrN_T(\sigma,\vecv,\scrP)=\scrN_T(\sigma,\vecv,\scrP,\scrB_1^d)$; cf.\ \eqref{disc00}.

It is natural to rescale $\sigma$ by a factor $\ell(\vecv)^{-d}$ in \eqref{NTEDEF},
since we then recover the property that the expectation value is asymptotically constant
and independent of the direction: 
For any probability measure $\lambda$ on $\S_1^{d-1}$ with continuous density, we have
\begin{equation}\label{expval2}
\lim_{T\to\infty}\int_{\S_1^{d-1}} \scrN_{T}(\ell(\vecv)^{-d}\sigma,\vecv,\scrP,\scrE)\, d\lambda(\vecv) =\sigma .
\end{equation}
This generalizes \eqref{expval}, and again follows from \eqref{asyden}. The proposition below covers both the rescaled and the original distribution.


\begin{prop}\label{GENEXPPROP}
Let $r\in\ZZ_{\geq 0}$. Assume that, for every $\sigma>0$ and every Borel probability measure $\lambda$  on $\S_1^{d-1}$ which is absolutely continuous with respect to $\omega$, the limit
\begin{equation}\label{GENEXPPROPRES1}
	\widetilde E(r,\sigma,\scrP):=\lim_{T\to\infty} \lambda(\{ \vecv\in\S_1^{d-1} : \scrN_{T}(\sigma,\vecv,\scrP,\scrB_1^d)\leq r \})
\end{equation}
exists and is continuous in $\sigma$ and independent of $\lambda$. Then, for every $\sigma$ and $\lambda$ as above, we have
\begin{equation}\label{GENEXPPROPRES2}
	\lim_{T\to\infty} \lambda(\{ \vecv\in\S_1^{d-1} : \scrN_{T}(\ell(\vecv)^{-d} \sigma  ,\vecv,\scrP,\scrE)\leq r \})=\widetilde E(r,\sigma,\scrP)
\end{equation}
and
\begin{equation}\label{GENEXPPROPRES3}
	\lim_{T\to\infty} \lambda(\{ \vecv\in\S_1^{d-1} : \scrN_{T}(\sigma ,\vecv,\scrP,\scrE)\leq r \})=\int_{\S_1^{d-1}} \widetilde E(r,\ell(\vecv)^d\sigma,\scrP) \; d\lambda(\vecv) .
\end{equation}
\end{prop}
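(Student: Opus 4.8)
It is convenient to prove the single statement that for every continuous function $\sigma\colon\S_1^{d-1}\to\R_{>0}$ and every Borel probability measure $\lambda$ on $\S_1^{d-1}$ that is absolutely continuous with respect to $\omega$,
\begin{equation}\label{GENEXPPROPPF1}
\lim_{T\to\infty}\lambda\bigl(\bigl\{\vecv\in\S_1^{d-1}\col\scrN_T(\sigma(\vecv),\vecv,\scrP,\scrE)\le r\bigr\}\bigr)=\int_{\S_1^{d-1}}\widetilde E\bigl(r,\ell(\vecv)^d\sigma(\vecv),\scrP\bigr)\,d\lambda(\vecv),
\end{equation}
where $\scrN_T(\sigma(\vecv),\vecv,\scrP,\scrE)$ is defined as in \eqref{NTEDEF} with the constant $\sigma$ replaced by $\sigma(\vecv)$. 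Indeed, \eqref{GENEXPPROPRES3} is the special case $\sigma(\vecv)\equiv\sigma$, while \eqref{GENEXPPROPRES2} is the special case $\sigma(\vecv)=\ell(\vecv)^{-d}\sigma$, for which the right-hand side of \eqref{GENEXPPROPPF1} collapses to $\widetilde E(r,\sigma,\scrP)$. Two elementary facts will be used throughout: (i) $\scrN_T(s,\vecv,\scrP,\scrB_1^d)$ is non-decreasing in $s>0$, because $\fD_T(s,\vecv)$ is a spherical cap centred at $\vecv$ whose $\omega$-measure $\frac{sd}{\theta(\scrP)T^d}$ grows with $s$; and (ii) this normalisation yields the scaling identity $\fD_T(s,\vecv)=\fD_{cT}(c^ds,\vecv)$ for every $c>0$, so that if a region $\scrE'$ agrees with the ball $\scrB_{cT}^d$ throughout the thin cone over $\fD_T(s,\vecv)$, then the number of $\vecy\in\scrP\cap T\scrE'\setminus\{\bn\}$ with $\|\vecy\|^{-1}\vecy\in\fD_T(s,\vecv)$ equals $\scrN_{cT}(c^ds,\vecv,\scrP,\scrB_1^d)$.

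\textbf{Step 1 (spherical case).} First I would establish \eqref{GENEXPPROPPF1} for $\scrE=\scrB_1^d$ (so $\ell\equiv1$), i.e.\ that $\lim_T\lambda(\{\vecv\col\scrN_T(\sigma(\vecv),\vecv,\scrP,\scrB_1^d)\le r\})=\int\widetilde E(r,\sigma(\vecv),\scrP)\,d\lambda(\vecv)$ for every continuous $\sigma$. Given $\eta>0$, partition $\S_1^{d-1}$ into finitely many Borel sets $S_i$ on each of which $\sigma$ oscillates by less than $\eta$, fix $c_i$ in the range of $\sigma$ on $S_i$, and sandwich (using (i)) $\scrN_T(\sigma(\vecv),\vecv,\scrP,\scrB_1^d)$ between $\scrN_T(c_i-\eta,\vecv,\scrP,\scrB_1^d)$ and $\scrN_T(c_i+\eta,\vecv,\scrP,\scrB_1^d)$ for $\vecv\in S_i$. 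Applying the hypothesis to $\lambda$ restricted to $S_i$ and renormalised — legitimate precisely because the limit $\widetilde E$ is \emph{independent} of the measure (terms with $\lambda(S_i)=0$ being discarded) — gives $\lim_T\lambda(\{\vecv\in S_i\col\scrN_T(c_i\mp\eta,\vecv,\scrP,\scrB_1^d)\le r\})=\lambda(S_i)\,\widetilde E(r,c_i\mp\eta,\scrP)$. Summing over $i$, using uniform continuity of $\widetilde E(r,\cdot,\scrP)$ on the relevant compact subinterval of $\R_{>0}$ to replace $\widetilde E(r,c_i\mp\eta,\scrP)$ by $\widetilde E(r,\sigma(\vecv),\scrP)$ up to an error vanishing as $\eta\to0$, and then sending $\eta\to0$, proves the spherical case; the same holds trivially with $T$ replaced by $cT$ for any fixed $c>0$. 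This is the only place where the three assumed properties of $\widetilde E$ enter.

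\textbf{Step 2 (general domain).} Fix $\eta\in(0,\min_{\S_1^{d-1}}\ell)$ and partition $\S_1^{d-1}$ into finitely many half-open ``spherical boxes'' $S_i$ with $\omega(\partial S_i)=0$ on each of which $\ell$ oscillates by less than $\eta$; choose $c_i$ in the range of $\ell$ on $S_i$ and set $\scrE^\pm=\bigcup_i\{r\vecv\col\vecv\in S_i,\:0\le r<c_i\pm\eta\}$, so $\scrE^-\subset\scrE\subset\scrE^+$. Since these three counts use the \emph{same} cap $\fD_T(\sigma(\vecv),\vecv)$ and differ only through the enclosing region, $\{\scrN_T(\sigma(\vecv),\vecv,\scrP,\scrE^+)\le r\}\subset\{\scrN_T(\sigma(\vecv),\vecv,\scrP,\scrE)\le r\}\subset\{\scrN_T(\sigma(\vecv),\vecv,\scrP,\scrE^-)\le r\}$. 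The radius of $\fD_T(\sigma(\vecv),\vecv)$ tends to $0$ as $T\to\infty$ \emph{uniformly} in $\vecv$ (as $\sigma$ is bounded on the compact sphere); hence for each $\delta>0$ there is $T_0(\delta,\eta)$ such that for $T\ge T_0(\delta,\eta)$ and every $\vecv\in S_i$ with $\dist(\vecv,\bigcup_j\partial S_j)>\delta$ the cap lies inside $S_i$, on the cone over which $T\scrE^\pm$ is exactly the ball $\scrB_{T(c_i\pm\eta)}^d$; by fact (ii), $\scrN_T(\sigma(\vecv),\vecv,\scrP,\scrE^\pm)=\scrN_{T(c_i\pm\eta)}\bigl((c_i\pm\eta)^d\sigma(\vecv),\vecv,\scrP,\scrB_1^d\bigr)$ for such $\vecv$. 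Applying Step 1 (with $T\mapsto T(c_i\pm\eta)$ and parameter function $(c_i\pm\eta)^d\sigma(\cdot)$) on each $S_i$ with its $\delta$-collar $B_\delta=\{\vecv\col\dist(\vecv,\bigcup_j\partial S_j)\le\delta\}$ removed, then letting $\delta\to0$ — note $\lambda(B_\delta)\to0$ because $\bigcup_j\partial S_j$ is compact of $\omega$-measure $0$ and $\lambda\ll\omega$ — shows, for each fixed $\eta$, that $\liminf_T$ and $\limsup_T$ of $\lambda(\{\scrN_T(\sigma(\vecv),\vecv,\scrP,\scrE)\le r\})$ are at least, respectively at most, $\int\widetilde E(r,(c_{i(\vecv)}\pm\eta)^d\sigma(\vecv),\scrP)\,d\lambda(\vecv)$, where $i(\vecv)$ is the index with $\vecv\in S_{i(\vecv)}$. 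Finally $\eta\to0$: as $|c_{i(\vecv)}\pm\eta-\ell(\vecv)|<2\eta$, these integrands converge uniformly to $\widetilde E(r,\ell(\vecv)^d\sigma(\vecv),\scrP)$ by uniform continuity of $\widetilde E(r,\cdot,\scrP)$, and \eqref{GENEXPPROPPF1} follows by squeezing.

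\textbf{Main difficulty.} All the estimates are elementary; the point I expect to require the most care is the ordering of the three limiting parameters — for a fixed partition ($\eta$ fixed) one first sends $T\to\infty$, then $\delta\to0$, and only afterwards $\eta\to0$ — together with the fact that the identification of the $\scrE^\pm$-counts with spherical counts is valid \emph{uniformly} in $\vecv$ once the thin collar $B_\delta$ around the partition boundaries has been removed. The uniform decay of the cap radius and the absolute continuity $\lambda\ll\omega$ are exactly what make this bookkeeping work, and both are available under the hypotheses of the proposition.
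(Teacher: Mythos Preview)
Your argument is correct and follows the same overall strategy as the paper's: partition the sphere into pieces on which $\ell$ is nearly constant, use the scaling identity $\fD_T(s,\vecv)=\fD_{cT}(c^ds,\vecv)$ to reduce to spherical truncations, invoke the hypothesis on each piece via the restricted measure, and squeeze using monotonicity and the continuity of $\widetilde E$.

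The organization differs in two respects worth noting. First, you unify \eqref{GENEXPPROPRES2} and \eqref{GENEXPPROPRES3} by proving the more general statement with a continuous parameter function $\sigma(\vecv)$; this costs you an extra Step~1, whereas the paper treats one case and declares the other ``very similar''. Second, the paper handles the boundary effect more economically: instead of your explicit $\delta$-collar and the three limits $T\to\infty$, $\delta\to0$, $\eta\to0$, it works with the slightly enlarged set $W_T=\bigcup_{\vecv\in W}\fD_T(\sigma,\vecv)$ and the quantity $\ell_T^-=\inf_{W_T}\ell$, then freezes $\ell_{T_0}^-$ via monotonicity in $\sigma$ before sending $T\to\infty$ and afterwards $T_0\to\infty$. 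This absorbs the cap-overhang into the running variable and removes one layer of approximation. Your version is a bit longer but arguably more transparent about where each hypothesis is used.
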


In other words, the existence of a limit distribution of $\scrN_T(\sigma,\vecv,\scrP,\scrB_1^d)$ independent of $\lambda$ implies the existence the limit distributions of both
$\scrN_T(\sigma,\vecv,\scrP,\scrE)$ and $\scrN_T(\ell(\vecv)^{-d}\sigma,\vecv,\scrP,\scrE)$, where
the limit of the latter is in fact independent of $\scrE$! 

\begin{proof}
This is a relatively standard approximation argument.
We give the proof of \eqref{GENEXPPROPRES3}; the proof of \eqref{GENEXPPROPRES2} is very similar.

Let $r,\sigma,\lambda$ be given.
For $W$ an arbitrary measurable subset of $\S_1^{d-1}$,
let $\lambda_{|W}$ be the restriction of $\lambda$ to $W$,
and set $W_T:=\cup_{\vecv\in W}\fD_T(\sigma,\vecv)$
and $\ell_T^-=\inf\{\ell(\vecv)\col\vecv\in W_T\}$.
Then for any $0<T_0\leq T$ we have
\begin{align*}
\lambda_{|W} (\{\vecv\in\S_1^{d-1}\col\scrN_T(\sigma,\vecv,\scrP,\scrE)\leq r\})
&\leq\lambda_{|W}(\{\vecv\in\S_1^{d-1}\col\scrN_T(\sigma,\vecv,\scrP,\scrB_{\ell_T^-}^d)\leq r\})
\\
&=\lambda_{|W}(\{\vecv\in\S_1^{d-1}\col\scrN_{T\ell_T^-}(({\ell_T^-})^d\sigma,\vecv,\scrP,\scrB_1^d)\leq r\})
\\
&\leq\lambda_{|W}(\{\vecv\in\S_1^{d-1}\col\scrN_{T\ell_T^-}(({\ell_{T_0}^-})^d\sigma,\vecv,\scrP,\scrB_1^d)\leq r\}).
\end{align*}
If $\lambda(W)>0$ then we get, letting $T\to\infty$ and using \eqref{GENEXPPROPRES1} with $\lambda(W)^{-1}\lambda_{|W}$
in place of $\lambda$:
\begin{align*}
\limsup_{T\to\infty}\lambda_{|W}(\{\vecv\in\S_1^{d-1}\col\scrN_T(\sigma,\vecv,\scrP,\scrE)\leq r\})
\leq\lambda(W)\widetilde E(r,({\ell_{T_0}^-})^d\sigma,\scrP),
\end{align*}
for any $T_0>0$.
Letting here $T_0\to\infty$, we conclude
\begin{align}\label{GENEXPPROPSHORTPF1}
\limsup_{T\to\infty}\lambda_{|W}(\{\vecv\in\S_1^{d-1}\col\scrN_T(\sigma,\vecv,\scrP,\scrE)\leq r\})
\leq\lambda(W)\widetilde E(r,({\ell_W^-})^d\sigma,\scrP),
\end{align}
where $\ell_W^-=\inf_W\ell(\vecv)$.
Note that \eqref{GENEXPPROPSHORTPF1} also holds if $\lambda(W)=0$, trivially.

Given any $\ve>0$, since $\widetilde E(r,\sigma,\scrP)$ is continuous in $\sigma$ and $\ell(\vecv)$ is
uniformly continuous in $\vecv$,
we can find a partition of $\S_1^{d-1}$ into measurable subsets
$W_1,\ldots,W_m$ such that
$|\widetilde E(r,({\ell_{W_j}^-})^d\sigma,\scrP)-\widetilde E(r,\ell(\vecv)^d\sigma,\scrP)|<\ve$
for all $j\in\{1,\ldots,m\}$ and all $\vecv\in W_j$.
Using $\lambda=\sum_{j=1}^m\lambda(W_j)\lambda_{W_j}$,
and applying \eqref{GENEXPPROPSHORTPF1} for each $W_j$, we get
\begin{align*}
\limsup_{T\to\infty}\lambda(\{\vecv\in\S_1^{d-1}\col\scrN_T(\sigma,\vecv,\scrP,\scrE)\leq r\})
&\leq\sum_{j=1}^m\lambda(W_j)\widetilde E(r,({\ell_{W_j}^-})^d\sigma,\scrP)
\\
&<\int_{\S_1^{d-1}} \widetilde E(r,\ell(\vecv)^d\sigma,\scrP) \; d\lambda(\vecv)+\ve.
\end{align*}
Similarly one proves a corresponding lower bound for the $\liminf$.
Now \eqref{GENEXPPROPRES3} follows upon letting $\ve\to0$.
\end{proof}
\end{appendix}

\end{document}